\newcommand{\dawesspace}{\ \hspace{-5.0cm}}  
\newtheorem{theorem}{Theorem}
\newtheorem{lemma}[theorem]{Lemma}
\theoremstyle{definition}
\newtheorem{definition}[theorem]{Definition}
\theoremstyle{definition}
\newtheorem{remark}[theorem]{Remark}
\numberwithin{theorem}{section}  
\DeclareMathOperator*{\argmin}{arg\,min}
\NewDocumentCommand{\codeword}{v}{%
\texttt{\textcolor{black}{#1}}%
}
\newcommand{\ba}{\begin{eqnarray}}
\newcommand{\ea}{\end{eqnarray}}
\newcommand{\nn}{\nonumber}
\def\BState{\State\hskip-\ALG@thistlm}
\newenvironment{algoproc}[1][]
  {\renewcommand{\algorithmcfname}{Procedure}%
   \begin{algorithm}[#1]
   \long\def\@caption##1[##2]##3{%
     \par
     \begingroup\@parboxrestore
     \if@minipage\@setminipage\fi
     \normalsize \@makecaption{\AlCapSty{\AlCapFnt\algorithmcfname}}{\ignorespaces ##3}%
     \par\endgroup
   }}
  {\end{algorithm}}
\journal{Neural Networks}
\begin{document}
\lstset{language=Python,keywordstyle={\bfseries \color{black}}}


\begin{frontmatter}



\title{Using Echo State Networks to Approximate Value Functions for Control}


\author[inst1]{Allen G. Hart}
\author[inst1]{Kevin R. Olding}
\author[inst1]{Alexander M.G. Cox}
\author[inst2]{Olga Isupova}
\author[inst1]{Jonathan H.P. Dawes}

\affiliation[inst1]{organization={Department of Mathematical Sciences,
University of Bath}, 
            city={Bath},
            postcode={BA2 7AY}, 
            country={UK}}

\affiliation[inst2]{organization={Department of Computer Science,
University of Bath},
            city={Bath},
            postcode={BA2 7AY}, 
            country={UK}}






\begin{abstract}
An Echo State Network (ESN) is a type of single-layer recurrent neural network with randomly-chosen internal weights and a trainable output layer. We prove under mild conditions that a sufficiently large Echo State Network can approximate the value function of a broad class of stochastic and deterministic control problems. Such control problems are generally non-Markovian.

We describe how the ESN can form the basis for novel and computationally efficient reinforcement learning algorithms in a non-Markovian framework. We demonstrate this theory with two examples. In the first, we use an ESN to solve a deterministic, partially observed, control problem which is a simple game we call `Bee World'. In the second example, we consider a stochastic control problem inspired by a market making problem in mathematical finance. In both cases we can compare the dynamics of the algorithms with analytic solutions to show that even after only a single reinforcement policy iteration the algorithms arrive at a good policy.
\end{abstract}



\begin{keyword}
    Liquid State Machines \sep
    Reservoir Computing \sep
    Stochastic Optimal Control \sep
    Mathematical Finance \sep
    Reinforcement Learning
\PACS 0000 \sep 1111
\MSC 0000 \sep 1111
\end{keyword}


\end{frontmatter}

\newpage

\section{Introduction}

An Echo State Network (ESN) is a special type of single-layer recurrent neural network introduced at the turn of the millennium by \cite{Jaeger2001} and \cite{doi:10.1162/089976602760407955} to study time series. Training is fast because the training step involves only the selection of weights in the output layer rather than updating the internal weights in the recurrent layer. Furthermore, the simple formulation of ESNs renders them amenable to mathematical analysis.
Given a time series $z_k$ (where $k$ is the discrete time index) of $d$-dimensional data points, an ESN is set up as follows. We randomly generate a $n \times n$ reservoir matrix $\boldsymbol{A}$, a $n \times d$ input matrix $\boldsymbol{C}$ and a $n \times 1$ bias vector $\boldsymbol{\zeta}$. Then we iteratively generate a sequence of $n$-dimensional \emph{reservoir state} vectors $x_k$ according to
\begin{align*}
    x_{k+1} = \sigma(\boldsymbol{A} x_k + \boldsymbol{C} z_k + \boldsymbol{\zeta})
\end{align*}
where $\sigma(x)_i = \max(0,x_i)$ is the rectified linear unit (ReLU) activation function applied component-wise to the $n$-dimensional vector $x$. Observe that the $k$th reservoir state $x_k$ depends on all past data-points $\ldots, z_{k-2}, z_{k-1}$ and therefore captures non-Markovian temporal correlations in the data. If the 2-norm of the reservoir matrix satisfies $\lVert \boldsymbol{A} \lVert_2 < 1$ then as $n$ tends to infinity, the influence on the reservoir state $x_{k+n}$ of the data points $\ldots, z_{k-2},z_{k-1}$ in the distant past  becomes arbitrarily small. This is called the \emph{fading memory property} and is closely related to the \emph{echo state property} (ESP) introduced in the context of ESNs by \cite{Jaeger2001}.
The ESP is the statement that the sequence of reservoir states $(x_k)_{k \in \mathbb{Z}}$ is, for a given input data sequence $(z_k)_{k \in \mathbb{Z}}$, uniquely determined. We can interpret the reservoir state vectors as the latent vectors which encode the infinite past observations in lower dimensional form.

When an ESN has the ESP, it can be applied to a class of supervised learning problems where we have a time series of $d$ dimensional data points $r_k$, called \emph{targets}, that depend on all previous input time series data $\ldots, z_{k-3}, z_{k-2}, z_{k-1}$ and we seek to learn the relationship between the sequence of past states and the target for each $k$. We can train an ESN to solve this problem by finding the $m \times d$ matrix $W$ that minimises
\begin{align*}
    \sum_{k=0}^{\ell-1} \lVert W^{\top}x_{k} - r_{k} \rVert^2 + \lambda\lVert W \rVert^2,
\end{align*}
where $\ell$ is the number of labelled data points, and $\lambda > 0$ is the Tikhonov regularisation (a.k.a. ridge regression) parameter. Throughout this paper, $\lVert \cdot \rVert$ denotes the matrix 2-norm, vector 2-norm or absolute value, depending on whether the input is a matrix, vector, or scalar, respectively.

This minimisation problem can be solved using regularised linear least squares regression, and hence we can both obtain $W$ quickly, and guarantee that $W$ is the global optimum. This compares extremely favourably with training a (deep) neural network with stochastic gradient descent and backpropagation which takes considerably longer, and may not converge to the global optimum \citep{General_Value_Function_Networks}.

Despite the training procedure being entirely linear, ESNs are universal approximators, and can therefore model arbitrarily complex relationships between the sequence of past data points and the targets. This is made formal in a recent result by \cite{Gonon2020} that we review here and then build on. We emphasise that not only are ESNs theoretically very promising, they have performed remarkably well in practice on problems ranging from seizure detection, to robot control,
handwriting recognition, and financial forecasting, where ESNs have won competitions \citep{LUKOSEVICIUS2009127},  \citep{LukoseviciusMantas2012RCT}, \citep{RodanA2011MCES}, \citep{NIPS2010_4056}. Impressively, ESNs outperformed RNNs and LSTMs at a chaotic time series prediction task by a factor of over 2400 \citep{Jaeger78}. ESNs have also proved themselves competitive in various tasks in reinforcement learning \citep{10.1007/11840817_86} and control \citep{Pietz_2021}.

Even in cases where practitioners prefer to use other recurrent neural networks (RNNs), such as Long Short Term Memory networks (LSTMs), the rigorous theory of ESNs should prove useful in architecture design. In \cite{Random_Weights}, it is shown that different deep neural network architectures can be ranked by randomly initialising the internal weights and training only the outer weights by linear regression. Once the best performing architecture (with random internal weights) has been identified, the authors then train the internal weights of the highest ranking architecture. This is much faster than training the internal weights (a nonlinear problem) for every architecture. The ranking of architectures with random internal weights closely approximates the ranking of architectures with optimised internal weights. From our point of view, the authors are essentially approximating fully trained networks with (non-recurrent) ESNs.

In a sequence of papers, \cite{GRIGORYEVA2018495}, \cite{JMLR:v20:19-150}, and \cite{Gonon2020} recently analysed ESNs in the context of nonlinear filters and functionals.
Roughly speaking, a \emph{filter} $U$ is a map from a bi-infinite sequence $\ldots , z_{-2},z_{-1},z_0, z_1, z_2, \ldots $ of real vectors to another bi-infinite sequence of real vectors $\ldots, x_{-2}, x_{-1},x_0, x_1, x_2, \ldots $, and a \emph{functional} $H$ maps a bi-infinite sequence $\ldots, z_{-2}, z_{-1}, z_0, z_1, z_2, \ldots $ of real vectors to a single real vector or number. We can view an ESN as a filter that maps an input sequence $\ldots, z_{-2}, z_{1}, z_0, z_1, z_2,\ldots $ to a reservoir sequence $\ldots, x_{-2},x_{-1},x_0, x_1, x_2, \ldots $, or a funtional that maps $\ldots, z_{-2}, z_{1}, z_0, z_1, z_2,\ldots $ to the lone reservoir state $x_0$. The theory of filters and functionals is therefore a natural theoretical setting for ESNs.  Within this theory, this paper presents three novel results. 

Our first result assumes that we have a time series of data $z_k$ and a set of targets $r_k$ that depend on all previous data points $\ldots,z_{k-2}, z_{k-1}$ via a functional $\mathcal{R}$ which sends infinite sequences of data points to targets. We then have a supervised learning problem of finding the relationship between the data and targets. In the special case that $z_k = r_k$, this problem is \emph{time series forecasting}. Our first novel result states that if we have sufficiently many data points $z_k$, drawn from a stationary, ergodic, and bounded process $\boldsymbol{Z}$, which need not be Markovian, and we obtain $W$ using regularised linear least squares, then a sufficiently large ESN will approximate, as closely as required, the functional $\mathcal{R}$ sending inputs $\ldots,z_{k-2}, z_{k-1}$ to the targets $r_k$. 

This result has applications in the \emph{statistical inference of dynamical systems}, which was recently reviewed by \cite{mcgoff2015}. This area of research is especially focused on statistical inference (i.e learning) of stationary ergodic processes. Furthermore, we can use this result in the context of reinforcement learning (RL) and optimal control. We envisage an agent operating under a given policy in the parlance of reinforcement learning or control in the parlance of control theory that generates a sequence of (reward, action, observation) triples $z_k = (r_k,a_k,\omega_k)$. Then the functional $V$ that maps previous (reward, action, observation) triples $\ldots , z_{k-2}, z_{k-1}$ to rewards $z_k$ models the reward functional arbitrarily well. The set up does not assume the RL problem is Markovian, and allows for a continuous state space.

Our second novel result generalises the first, and encompasses the case where the functional $V$ is the value functional of a stochastic control process, or Partially Observed Markov Decision Process (POMDP). By training an ESN to approximate the value functional, we establish a stepping stone toward developing an offline reinforcement learning algorithm supported by an ESN that can solve a large class of control problems. Moreover, since ESNs are recurrent, they can be used for non-Markovian problems, where a reinforcement learning agent must exploit its \emph{memory} of past observations, actions and rewards. Our third result is presented in the context of building an online reinforcement algorithm that can, under certain conditions, determine the optimal value function for a given policy. 

These results are part of a general push to take machine learning ideas typically applied to (partially observed) Markov processes and generalising them to hold on stationary ergodic processes. We can see for example \cite{9174045} consider to clustering problems typically defined Markov processes applied to stationary ergodic processes.

We demonstrate some of these theoretical results numerically on two examples. The first is a deterministic game which we call `Bee World'. The goal of the game for the bee is to navigate a time varying distribution of nectar in order to maximise the total future discounted value of the nectar acquired over all future time. The optimal trajectory can be found explicitly via the calculus of variations but the constraint that the bee has a maximum speed of flight leads to unexpectedly complicated solution paths; it therefore provides a straightforward but not entirely trivial control problem. Since the bee does not have access to the entire state space, and only observes the nectar it collects at each moment in time, the problem is therefore a partially observed Markov Decision Process which requires memory of the past to solve. We demonstrate how a simple and easily-configurable reinforcement learning algorithm supported by an ESN can learn to play Bee World with respectable skill.

The second numerical example is inspired by a market making problem in mathematical finance. The mathematical formulation of this problem reduces to a seeking to control a one dimensional Brownian motion so that it stays near the origin. The cost of straying from the origin is quadratic in the distance from the origin, and the cost of applying a push toward the origin is quadratic in the strength of the push. The market maker must therefore balance the cost of applying the control against the cost of allowing the motion to drift too far from the origin. We briefly discuss the financial motivation for this problem, then solve it analytically in continuous and discrete time. The set up most commonly seen in the literature is continuous time, but only in discrete time is the problem suitable for an ESN. We then compare the optimal discrete time solution to a solution learned by a reinforcement learning agent supported by an ESN.

Finally, we note that our approach to the Market making problem is loosely related to the recent paper by \cite{Pietz_2021} who introduce QuaSiModO: Quantization-Simulation-Modeling-Optimization. These authors analyse the interplay between the following four aspects:
\begin{enumerate}
    \item Quantising the action space $\mathcal{A}$.
    \item Simulating a system under a given control/policy.
    \item Modelling the full system given a partial/full observation of the state space.
    \item Optimising the control/policy.
\end{enumerate}

The structure of the remainder of the paper closely follows the summary of results presented above. In section~\ref{theory_of_filters} we set up the mathematical formalism for ESNs that we wish then to approximate. Section~\ref{training_ESNs_with_least_squares} introduces our novel theoretical results, while sections~\ref{Bee_World} and~\ref{a_stochastic_optimal_control_problem} respectively present applications to the deterministic (`Bee World'), and then the stochastic (`market maker') optimal control problems. We conclude in section~\ref{Conclusions}.

\section{Background}
\label{theory_of_filters}
In this section, we introduce the theory and notation of nonlinear filters (in relation to ESNs) developed by \cite{GRIGORYEVA2018495}, \cite{JMLR:v20:19-150}, and \cite{Gonon2020}. First, we denote by  $(\mathbb{R}^d)^{\mathbb{Z}}$ the set of maps with domain $\mathbb{Z}$ and codomain $\mathbb{R}^d$. This is the set of bi-infinite $\mathbb{R}^d$--valued real sequences.

A \emph{filter} is a map $U : (\mathbb{R}^d)^{\mathbb{Z}} \to (\mathbb{R}^n)^{\mathbb{Z}}$. A filter $U$ is called \emph{causal} if inputs from the past and present $\ldots, z_{-2}, z_{-1}, z_0$ contribute to $U(z)$ but states in the future $z_1, z_2 \ldots$ do not. More formally $U$ is casual if $\forall \ z,y \in (\mathbb{R}^d)^{\mathbb{Z}}$ that satisfy $z_k = y_k \ \forall \ k \leq 0$ it follows that $U(z) = U(y)$. We define the time shift filter $T : (\mathbb{R}^d)^{\mathbb{Z}} \to (\mathbb{R}^n)^{\mathbb{Z}}$ by $T(z)_k = T(z)_{k+1}$ which we interpret as the map that steps forward one unit of time. A filter $U$ is called \emph{time invariant} if $U$ commutes with the time shift operator $T$. If $U$ is causal and time invariant filter then we call $U$ a causal time invariant (CTI) filter.

A functional is a map $H : (\mathbb{R}^d)^{\mathbb{Z}} \to \mathbb{R}^n$. In \cite{JMLR:v20:19-150} it is shown that there is a bijection between the space of CTI filters and the space of functionals. To see this, take a functional $H$ and define the $k$th term of the associated filter $U$ via $U(z)_k = HT^k(z)$. Conversely, given a filter $U$, the associated functional $H$ is given by $H(z) = U(z)_0$

We can view an ESN as a CTI filter from the space of input sequences $\ldots , z_{-1}, z_0, z_1, \ldots$ to the space of reservoir sequences $\ldots , x_{-1}, x_0, x_1, \ldots$.  To make this connection between ESNs and filters formal, we will first present a generalisation of an Echo State Network called a reservoir system. 
\begin{definition}
    (Reservoir system) Let $F : \mathbb{R}^n \times \mathbb{R}^d \to \mathbb{R}^n$ and $h : \mathbb{R}^n \to \mathbb{R}^s$. Then we call the following system of equations
    \begin{align}
        x_{k+1} &= F(x_k,z_k) \label{reservoir_system_eqns} \\
        r_k &= h(x_k) \nonumber
    \end{align}
    a reservoir system.
\end{definition}
\begin{remark}
    We can see that if
\begin{align*}
    F(x,z) &= \sigma(\boldsymbol{A} x + \boldsymbol{C} z + \boldsymbol{\zeta}) \\
    h(x) &= W^{\top} x 
\end{align*}
then we retrieve an ESN with $n \times n$ reservoir matrix $\boldsymbol{A}$, $n \times d$ input matrix $\boldsymbol{C}$, bias vector $\boldsymbol{\zeta} \in \mathbb{R}^n$, linear output layer $W \in \mathbb{R}^n$, and activation function $\sigma  = \text{ReLU}$, defined in the introduction.
\end{remark}

 We require that the reservoir system induces a \emph{unique} filter from the input sequence to the reservoir sequence. This property is the Echo State Property that we briefly mentioned in the introduction.

\begin{definition}
    (Echo State Property \citep{Jaeger2001}) A reservoir system has the Echo State Property (ESP) if for any $(z_k)_{k \in \mathbb{Z}} \in (\mathbb{R}^d)^{\mathbb{Z}}$ there exists a unique $(x_k)_{k \in \mathbb{Z}} \in (\mathbb{R}^n)^{\mathbb{Z}}$ that satisfy the equations of the reservoir system~\eqref{reservoir_system_eqns}.
\end{definition}

To any reservoir system with the Echo State property we can associate a unique CTI reservoir filter $U : (\mathbb{R}^d)^{\mathbb{Z}} \to (\mathbb{R}^n)^{\mathbb{Z}}$ defined by $U(z) = x$. To this reservoir filter, we may assign a CTI reservoir functional $H : (\mathbb{R}^m)^{\mathbb{Z}} \to \mathbb{R}^d$ defined by $H(z) = x_0$. In a supervised learning context, we have a time series of data points $\ldots ,z_{-2}, z_{-1}, z_0$ and a time series of targets $\ldots ,r_{-1}, r_0$ that each depend on \emph{all} previous data points.
The output functional $ h \circ H : (\mathbb{R}^d)^{\mathbb{Z}} \to \mathbb{R}$ is the map we use to approximate the relationship between the data and the targets, so $h \circ H(\ldots, z_{-2},z_{-1},z_0,z_1,z_2,\ldots ) \approx r_k$. Note that $h \circ H$ is \emph{causal}, so does not peer into the future and use data $z_1,z_2,\ldots $ that have not yet been revealed. 
When the reservoir system is an ESN, the map $h$ is the linear map $W^{\top}$ obtained by least squares ridge regression, so that $W^{\top}H(\ldots, z_{-2},z_{-1}, z_0, z_{1}, \ldots) \approx r_k$. 
We assume there exists a \emph{true} map from the data to the targets that we label $\mathcal{R} :\mathbb{R}^{\mathbb{Z}} \to \mathbb{R}$ so that $\mathcal{R}(\ldots, z_{-2},z_{-1}, z_0, z_1, \ldots) = r_k$. Our goal is to find $W$ such that $W^{\top}H \approx \mathcal{R}$.

\begin{definition}
    (ESN filter and functional) If an ESN has the ESP then we will write $H^{\boldsymbol{A},\boldsymbol{C},\boldsymbol{\zeta}}$ to denote the reservoir functional associated to an ESN with parameters $\boldsymbol{A},\boldsymbol{C}$ and $\boldsymbol{\zeta}$. We will also write $H^{\boldsymbol{A},\boldsymbol{C},\boldsymbol{\zeta}}_W$ to denote the output functional $W^{\top} H^{\boldsymbol{A},\boldsymbol{C},\boldsymbol{\zeta}}$ (defined by left multiplication of $H^{\boldsymbol{A},\boldsymbol{C},\boldsymbol{\zeta}}$ by the linear readout layer) 
\end{definition}

Next, we will present a procedure, introduced by \cite{Gonon2020}, for randomly generating the ESN's internal weights $\boldsymbol{A},\boldsymbol{C}$ and biases $\boldsymbol{\zeta}$, which ensures the ESN has ESP and allows for the universal approximation of target functionals $\mathcal{R}$.
The procedure differs from the procedure commonly seen in the literature, where $\boldsymbol{A}, \boldsymbol{C},\boldsymbol{\zeta}$ are populated with i.i.d Gaussians, or i.i.d uniform deviates, and then $\boldsymbol{A}$ is rescaled so that its 2-norm (or spectral radius) is less than 1. Furthermore, the procedure introduced by \cite{Gonon2020} depends on some details of the input process, which must satisfy mild conditions stated below.

\begin{definition}
    (Admissible input process) A $(\mathbb{R}^d)^\mathbb{Z}$ valued random variable $\boldsymbol{Z}$ is called an admissible process if for any $T \in \mathbb{N}$ there exists $M_T>0$ such that for all $k \in \mathbb{Z}$
    \begin{align}
        \label{admissible}
        \lVert \boldsymbol{Z}_{k-T} , \boldsymbol{Z}_{k-T+1} , \ldots  , \boldsymbol{Z}_k \rVert \leq M_T
    \end{align}
    Lebesgue-almost surely.
\end{definition}

We will now present a procedure by which the matrices $\boldsymbol{A},\boldsymbol{C},\boldsymbol{\zeta}$ are randomly generated. 

\begin{algoproc}[H]
  \SetAlgoLined
  \caption{Initialising the random weights of an ESN.}
    Let $N \in \mathbb{N}$, $R > 0$ be the input parameters for the procedure. Suppose that $\boldsymbol{Z}$ is an admissible input process. Consequently, for any $T_0 \in \mathbb{N}$ there exists $M_{T_0}$ such that (for $k=0$ in (\ref{admissible}))
    \begin{align*}
        \lVert \boldsymbol{Z}_{-{T_0}} , \boldsymbol{Z}_{-T_0+1} , \ldots  , \boldsymbol{Z}_0 \rVert \leq M_T
    \end{align*}
    Lebesgue-almost surely.
    Then, for a given $T_0$, we initialise the ESN reservoir matrix $\boldsymbol{A}$, input matrix $\boldsymbol{C}$, and biases $\boldsymbol{\zeta}$ according to the following procedure.
    \begin{enumerate}
        \item Draw $N$ i.i.d. samples $\boldsymbol{A}_1 , \ldots  , \boldsymbol{A}_N$ from the uniform distribution on $B_R \subset \mathbb{R}^{d(T_)+1)}$ where $B_R$ is the ball of radius $R$ and centre 0, and draw $N$ i.i.d. samples $\boldsymbol{\zeta}_1 , \ldots  \boldsymbol{\zeta}_N$ from the uniform distribution on $[-\max(M_{T_0} R, 1) , \max(M_{T_0} R, 1)]$.
        \item Let $S$ and $c$ be shift matrices defined
        \begin{align*}
            S = 
            \begin{bmatrix}
                0_{d,dT_0} & 0_{d,d} \\
                I_{dT_0} & 0_{dT_0,d}
            \end{bmatrix}
            \qquad 
            c = 
            \begin{bmatrix}
                I_{d} \\
                0_{dT_0,d}
            \end{bmatrix}
        \end{align*}
        and set
        \begin{align*}
            \boldsymbol{a} =
            \begin{bmatrix}
                \boldsymbol{A}_1^{\top} \\
                \boldsymbol{A}_2^{\top} \\
                \vdots \\
                \boldsymbol{A}_N^{\top}
            \end{bmatrix}
            \qquad 
            \boldsymbol{\bar{A}} =
            \begin{bmatrix}
                S & 0_{d(T_0+1),N} \\
                \boldsymbol{a}S & 0_{N,N} 
            \end{bmatrix}
                \\
            \boldsymbol{\bar{C}} =
            \begin{bmatrix}
                c \\
                \boldsymbol{a}c 
            \end{bmatrix}
            \qquad
            \boldsymbol{\bar{\zeta}} = 
            \begin{bmatrix}
                0_{d(T_0+1)} \\
                \boldsymbol{\zeta}_1 \\
                \vdots \\
                \boldsymbol{\zeta}_N
            \end{bmatrix}
        \end{align*}
        so that
        \begin{align*}
            \boldsymbol{A} = 
            \begin{bmatrix}
                \boldsymbol{\bar{A}} & -\boldsymbol{\bar{A}} \\
                -\boldsymbol{\bar{A}} & \boldsymbol{\bar{A}}
            \end{bmatrix}
            \qquad
            \boldsymbol{C} = 
            \begin{bmatrix}
                \boldsymbol{\bar{C}} \\
                -\boldsymbol{\bar{C}}
            \end{bmatrix}
            \qquad
            \boldsymbol{\zeta} = 
            \begin{bmatrix}
                \boldsymbol{\bar{\zeta}} \\
                -\boldsymbol{\bar{\zeta}}
            \end{bmatrix}.
        \end{align*}
    \end{enumerate}
    \label{proc:procedure}
\end{algoproc}

We are now ready to present the key result by \cite{Gonon2020}, (which generalises a result by \cite{HART2020234}) and which holds in the following supervised learning context. Given time series data $z_k$ (from an admissible process $\boldsymbol{Z}$) and a time series of targets $r_k$ depending on all previous data $\ldots,z_{k-2}, z_{k-1}$ we wish to approximate the functional that sends $\ldots,z_{k-2}, z_{k-1}$ to $r_k$. We will denote this functional $\mathcal{R}$. The problem of approximating $\mathcal{R}$ given the data and targets is a supervised learning problem. The result can be summarised as follows. Suppose we have an ESN with weights $\boldsymbol{A},\boldsymbol{C}$ and biases $\boldsymbol{\zeta}$ randomly generated by procedure \ref{proc:procedure}. Then, the ESN admits a linear readout matrix $W$ for which the ESN equipped with the matrix $W$ (denoted $H^{\boldsymbol{A},\boldsymbol{C},\boldsymbol{\zeta}}_W$) approximates the relationship $\mathcal{R}$ between data points $\ldots,z_{k-2}, z_{k-1}$ and targets $r_k$ as closely as is required.

\begin{theorem}
[\cite{Gonon2020}] Suppose that $\boldsymbol{Z}$ is an admissible input process. Let $\mathcal{R} : (D_n)^{\mathbb{Z}} \to \mathbb{R}$ (where $D_n$ is a compact subset of $\mathbb{R}^n$) be CTI and measurable with respect to some measure $\mu$ such that $\mathbb{E}_{\mu}[|\mathcal{R}(\boldsymbol{Z})|^2] < \infty$.

Then for any $\epsilon > 0$ and $\delta \in (0,1)$ there exists $N,T_0 \in \mathbb{N}$, $R > 0$ such that, with probability $(1-\delta)$, the ESN with parameters $\boldsymbol{A},\boldsymbol{C},\boldsymbol{\zeta}$ generated by the procedure in definition~\ref{proc:procedure} (with inputs $N,T_0,R$) has the ESP and admits a readout layer $W \in \mathbb{R}^{2(d(T_0+1)+N)}$ such that
\begin{align*}
\bigg(\mathbb{E}_{\mu}\left[ \left. \left\lVert H^{\boldsymbol{A},\boldsymbol{C},\boldsymbol{\zeta}}_W(\boldsymbol{Z}) - \mathcal{R}(\boldsymbol{Z}) \right\rVert^2 \, \right| \, \boldsymbol{A} , \boldsymbol{C} , \boldsymbol{\zeta} \right]\bigg)^{1/2}
\hspace{-0.25cm} :=
\bigg( \int_{(\mathbb{R}^d)^{\mathbb{Z}}} \left\lVert H^{\boldsymbol{A},\boldsymbol{C},\boldsymbol{\zeta}}_W(z) - \mathcal{R}(z)\right\rVert^2 d\mu(z) \bigg)^{1/2}
\hspace{-0.25cm} < \epsilon .
\end{align*}
    \label{general_ESN_approximation_theorem}
\end{theorem}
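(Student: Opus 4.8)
The plan is to route the approximation through three successively coarser targets, bounding $\|H^{\boldsymbol A,\boldsymbol C,\boldsymbol\zeta}_W(\boldsymbol Z)-\mathcal R(\boldsymbol Z)\|_{L^2(\mu)}$ by a sum of three pieces each forced below $\epsilon/3$, with randomness entering only the last piece.
\emph{Step 1 (temporal truncation).} Since $\mathcal R$ is causal, $\mathcal R(\boldsymbol Z)$ is measurable with respect to $\mathcal F_\infty:=\sigma(\boldsymbol Z_k:k\le 0)$, and since $\mathbb E_\mu[|\mathcal R(\boldsymbol Z)|^2]<\infty$, L\'evy's upward martingale convergence theorem gives $\mathcal R_{T}(\boldsymbol Z):=\mathbb E_\mu[\mathcal R(\boldsymbol Z)\mid \boldsymbol Z_0,\boldsymbol Z_{-1},\dots,\boldsymbol Z_{-T}]\to\mathcal R(\boldsymbol Z)$ in $L^2(\mu)$ as $T\to\infty$. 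I fix $T_0$ so that $\|\mathcal R_{T_0}(\boldsymbol Z)-\mathcal R(\boldsymbol Z)\|_{L^2(\mu)}<\epsilon/3$. By admissibility the window $(\boldsymbol Z_0,\dots,\boldsymbol Z_{-T_0})$ lies a.s. in a fixed compact ball $K_{T_0}\subset\mathbb R^{d(T_0+1)}$ of radius $\le M_{T_0}$, so $\mathcal R_{T_0}$ becomes a square-integrable function on $K_{T_0}$ with respect to $\nu$, the law of that window under $\mu$.

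\emph{Step 2 (reduction to a representable function, fixing $R$).} Continuous functions are dense in $L^2(\nu)$, and on the compact set $K_{T_0}$ a continuous function can be approximated uniformly (by mollification, or by passing to a band-limited function) by a function $g$ admitting an integral representation $g(y)=\int \phi(a,b)\,\sigma(a^\top y+b)\,d\rho(a,b)$ with $\phi$ bounded and $\rho$ supported in $B_R\times[-\max(M_{T_0}R,1),\max(M_{T_0}R,1)]$ once $R$ is taken large enough. Choosing $R$ and then $g$ this way, with $\|g-\mathcal R_{T_0}\|_{L^2(\nu)}<\epsilon/3$, fixes the parameter $R$.

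\emph{Step 3 (random ReLU features and the reservoir realization).} Draw $\boldsymbol A_1,\dots,\boldsymbol A_N$ uniformly from $B_R$ and $\boldsymbol\zeta_1,\dots,\boldsymbol\zeta_N$ uniformly from $[-\max(M_{T_0}R,1),\max(M_{T_0}R,1)]$ as in Procedure~\ref{proc:procedure}. Then $\widehat g_N(y):=\frac1N\sum_{i=1}^N w_i\,\sigma(\boldsymbol A_i^\top y+\boldsymbol\zeta_i)$, with $w_i$ the weight coming from $\phi$ evaluated at $(\boldsymbol A_i,\boldsymbol\zeta_i)$, is an unbiased Monte-Carlo approximation of $g$ whose variance is bounded uniformly over $y\in K_{T_0}$ (because $\phi$ and $\sigma$ are bounded there); a Chebyshev estimate --- refined by a covering-number / Rademacher argument if a bound uniform in $y$ is wanted --- produces $N=N(\epsilon,\delta,R,T_0)$ with $\|\widehat g_N-g\|_{L^2(\nu)}<\epsilon/3$ on an event of probability at least $1-\delta$. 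It remains to see that Procedure~\ref{proc:procedure} realizes $\widehat g_N$ as an ESN output functional: the $\pm$-mirrored block structure of $(\boldsymbol A,\boldsymbol C,\boldsymbol\zeta)$ forces the \emph{difference} of the two half reservoir states to obey a purely linear recursion, since $\sigma(w)-\sigma(-w)=w$, and its transition matrix $\bar{\boldsymbol A}$ is nilpotent thanks to the shift blocks $S$, so this difference --- and hence the whole state --- is the unique, explicit, finite-memory linear function of a window of $T_0+1$ recent inputs, which is simultaneously why the ESP holds (deterministically, for every draw). One half-state then equals the componentwise ReLU of that linear state, whose designated coordinates are exactly the random features $\sigma(\boldsymbol A_i^\top(\text{window})+\boldsymbol\zeta_i)$ while the remaining coordinates carry the stored window; choosing $W$ to place the weights $w_i/N$ on the feature coordinates (and any affine part of $g$ on the window coordinates) gives $H^{\boldsymbol A,\boldsymbol C,\boldsymbol\zeta}_W(z)=\widehat g_N(\text{window of }z)$. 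Three applications of the triangle inequality across Steps 1--3 then give the stated bound on the probability-$\ge 1-\delta$ event.

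\emph{Main obstacle.} The crux is the quantitative random-feature guarantee in Step 3: producing the integral representation of the smoothed target with representing measure supported inside the prescribed sampling region $B_R\times[-\max(M_{T_0}R,1),\max(M_{T_0}R,1)]$ and with a bounded density there (so that $R$ must be chosen in tandem with $T_0$ and $M_{T_0}$), and then upgrading ``the expectation equals $g$'' to ``a specific i.i.d.\ draw is $\epsilon/3$-close in $L^2(\nu)$ with probability $\ge 1-\delta$'' with an explicit $N$. By comparison the martingale truncation of Step 1, the density/mollification of Step 2, and the block-matrix bookkeeping together with the ESP verification in Step 3 are routine.
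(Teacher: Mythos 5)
First, a point of order: the paper does not prove Theorem~\ref{general_ESN_approximation_theorem} at all --- it is imported verbatim from \cite{Gonon2020} and used as a black box (the paper's own contributions, Theorems~\ref{generalised_training_theorem} and~\ref{offline_q_learning}, build on it). So there is no in-paper proof to compare yours against; the relevant comparison is with the cited source.

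Measured against that source, your reconstruction has the right architecture and tracks the actual argument closely: (i) truncation of the causal functional to a finite window via martingale convergence of conditional expectations; (ii) approximation of the resulting function on the compact window space by one admitting an integral representation over ReLU ridge functions with representing measure supported in $B_R\times[-\max(M_{T_0}R,1),\max(M_{T_0}R,1)]$; (iii) Monte Carlo sampling of that representation plus a variance/Markov bound to obtain the probability-$(1-\delta)$ event, together with the observation that the $\pm$-mirrored blocks and the nilpotent shift $S$ make Procedure~\ref{proc:procedure} realise exactly these random features (and the ESP) deterministically for every draw. Three remarks. The step you flag as the main obstacle is indeed the non-routine content of \cite{Gonon2020}: ``mollify, or pass to a band-limited function'' does not by itself produce a representing measure with bounded density supported in the prescribed region --- that requires a Fourier/Barron-type representation of a regularised target, with $R$ chosen in tandem with the accuracy of that representation --- so your sketch is honest about where the work lies but does not close it. Second, a bookkeeping point in your Step 3: the first $d(T_0+1)$ coordinates of one half-state carry only $\sigma$ of the stored window (its positive part), so to read off an affine function of the window the readout must combine both halves via $\sigma(w)-\sigma(-w)=w$; this is precisely why the readout lives in $\mathbb{R}^{2(d(T_0+1)+N)}$. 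Third, your Chebyshev estimate in $L^2(\nu)$ already suffices as stated --- no uniform-in-$y$ covering or Rademacher refinement is needed, since the theorem measures the error in $L^2(\mu)$.
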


\section{Novel results for ESNs}
\label{training_ESNs_with_least_squares}

Theorem \ref{general_ESN_approximation_theorem} is an existence result stating that \emph{there exists} a linear readout layer $W$ yielding an arbitrarily good approximation. Our first novel contribution is to strengthen the result under additional assumptions. The new result states that, given a sufficiently large ESN and sufficiently many training data $z_k$ drawn from a stationary, ergodic and bounded process $\boldsymbol{Z}$, if we train an ESN using regularised least squares then the arbitrarily good readout layer $W$ \emph{will} be attained (with probability as close to 1 as desired). This result is analogous to the main result by \cite{Hart_regularised_2020} who prove a similar theorem for ESNs trained on deterministic inputs. Before we introduce the result we will present the definition of a stationary process, an ergodic process, and the ergodic theorem.

\begin{definition}
    (Stationary Process \cite{mcgoff2015}) A stochastic process $(\boldsymbol{Z}_k)_{k \in \mathbb{Z}} \equiv \boldsymbol{Z}$ is stationary if for any $\ell \in \mathbb{N}$ and finite subset $I \subset \mathbb{Z}$ the joint distribution $(\boldsymbol{Z}_i)_{i \in I}$ is equal to the joint distribution $(\boldsymbol{Z}_{i+\ell})_{i \in I}$.
\end{definition}

\begin{definition}
    (Stationary Ergodic Process \cite{mcgoff2015}) A stationary stochastic process $(\boldsymbol{Z}_k)_{k \in \mathbb{Z}} \equiv \boldsymbol{Z}$ is called ergodic if for every $\ell \in \mathbb{N}$ and every pair of Borel sets $A,B$
    \begin{align*}
        \lim_{\ell \to \infty} \frac{1}{\ell} \sum^{\ell-1}_{k=0} &\mathbb{P}\bigg( (\boldsymbol{Z}_1 , \ldots , \boldsymbol{Z}_{\ell}) \in A, (\boldsymbol{Z}_k , \ldots , \boldsymbol{Z}_{k+\ell}) \in B \bigg) \\
        = &\mathbb{P}\bigg( (\boldsymbol{Z}_1 , \ldots , \boldsymbol{Z}_{\ell}) \in A \bigg) \mathbb{P}\bigg( (\boldsymbol{Z}_1 , \ldots , \boldsymbol{Z}_{\ell}) \in B \bigg).
    \end{align*}
\end{definition}

Every stationary ergodic processes $Z$ satisfies the celebrated Ergodic Theorem.

\begin{theorem}
    (Ergodic Theorem) If $(\boldsymbol{Z}_k)_{k \in \mathbb{Z}} \equiv \boldsymbol{Z}$ is a stationary ergodic process then for any $i \in \mathbb{Z}$
    \begin{align*}
        \mathbb{E}_{\mu}[\boldsymbol{Z}_i] = \lim_{\ell \to \infty} \frac{1}{\ell}\sum_{k=0}^{\ell-1}  \boldsymbol{Z}_{i+k}
    \end{align*}
    almost surely.
\end{theorem}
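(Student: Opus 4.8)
The statement is the classical Birkhoff pointwise ergodic theorem, specialised to the canonical representation of a stationary sequence, so the plan is to reduce it to the standard measure-preserving-system formulation and then run the usual maximal-inequality argument. First I would pass to path space: let $\mu$ be the law of $\boldsymbol{Z}$ on $\Omega = (\mathbb{R}^d)^{\mathbb{Z}}$, let $T \colon \Omega \to \Omega$ be the left shift, and let $\pi_0 \colon \Omega \to \mathbb{R}^d$ be the coordinate-$0$ projection, so that $\boldsymbol{Z}_j = \pi_0 \circ T^{\,j}$ $\mu$-almost surely. Stationarity says precisely that $T$ preserves $\mu$, and because $\boldsymbol{Z}$ is bounded (indeed admissible) each scalar component of $\pi_0$ lies in $L^\infty(\mu) \subset L^1(\mu)$, which disposes of all integrability issues. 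Since changing $i$ merely relabels the summands, it is enough to fix a scalar component $f \in L^\infty(\mu)$ of $\pi_0$ and show $A_\ell f := \frac{1}{\ell}\sum_{k=0}^{\ell-1} f \circ T^{\,k} \to \int_\Omega f \, d\mu$ $\mu$-almost surely; the vector claim for general $i$ then follows by applying this componentwise and shifting.

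Second, I would translate the hypothesis into the form actually used. The ergodicity condition as stated asserts that $\frac{1}{\ell}\sum_{k=0}^{\ell-1}\mu(\mathcal A \cap T^{-k}\mathcal B) \to \mu(\mathcal A)\mu(\mathcal B)$ for all cylinder events $\mathcal A,\mathcal B$; a routine $\pi$--$\lambda$ / monotone-class approximation upgrades this to all $\mathcal A,\mathcal B$ in the product $\sigma$-algebra. Taking $\mathcal A = \mathcal B = E$ with $T^{-1}E = E$ gives $\mu(E) = \mu(E)^2$, hence $\mu(E)\in\{0,1\}$: every shift-invariant event is $\mu$-trivial, and any $T$-invariant measurable function is $\mu$-a.s. constant.

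Third comes Birkhoff's theorem itself. Put $S_n f = \sum_{k=0}^{n-1} f \circ T^{\,k}$ with $S_0 f = 0$, and $f^\ast = \limsup_\ell A_\ell f$, $f_\ast = \liminf_\ell A_\ell f$; both are $T$-invariant. The engine is the \emph{Maximal Ergodic Theorem}: $\int_{\{\sup_{n\ge1} S_n f > 0\}} f \, d\mu \ge 0$. Granting it, fix rationals $\alpha<\beta$ and apply the maximal theorem on the invariant set $D_{\alpha,\beta} = \{f_\ast < \alpha\}\cap\{f^\ast>\beta\}$ — once to $f-\beta$ (legitimate there since $\sup_n A_n f \ge f^\ast > \beta$ on $D_{\alpha,\beta}$) and once to $\alpha - f$ — to obtain $\beta\,\mu(D_{\alpha,\beta}) \le \int_{D_{\alpha,\beta}} f\,d\mu \le \alpha\,\mu(D_{\alpha,\beta})$, which forces $\mu(D_{\alpha,\beta})=0$. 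A union over rational $\alpha<\beta$ shows $f^\ast = f_\ast$ $\mu$-a.s.; write $g$ for the common value. Then $g$ is $T$-invariant, hence $\mu$-a.s. constant by Step 2, and since $|A_\ell f|\le\lVert f\rVert_\infty$, dominated convergence together with $T$-invariance of $\mu$ gives $\int_\Omega g\,d\mu = \lim_\ell \int_\Omega A_\ell f\,d\mu = \int_\Omega f\,d\mu$, so $g = \mathbb{E}_\mu[f]$ a.s., which is exactly the claimed identity.

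The one genuinely substantial ingredient, and the step I expect to be the main obstacle, is the Maximal Ergodic Theorem; I would prove it by Garsia's short argument. For $N \ge 1$ set $M_N = \max_{0 \le n \le N} S_n f \ge 0$. For $1 \le n \le N$ we have $S_n f = f + (S_{n-1}f)\circ T \le f + (M_N)\circ T$, and on $\{M_N > 0\}$ (where $M_N = \max_{1\le n\le N} S_n f$) this yields $f \ge M_N - (M_N)\circ T$. Integrating over $\{M_N>0\}$, noting $\int_{\{M_N>0\}} M_N\,d\mu = \int_\Omega M_N\,d\mu$ (as $M_N=0$ off the set) and $\int_{\{M_N>0\}} (M_N)\circ T\,d\mu \le \int_\Omega (M_N)\circ T\,d\mu = \int_\Omega M_N\,d\mu$ by $T$-invariance, gives $\int_{\{M_N>0\}} f\,d\mu \ge 0$; letting $N\to\infty$ with $\{M_N>0\}\uparrow\{\sup_{n\ge1}S_n f>0\}$ and invoking dominated convergence completes it. Everything else above is bookkeeping once this inequality is in hand.
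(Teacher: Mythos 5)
Your proof is correct, but it is worth saying up front that the paper offers no proof of this statement at all: the Ergodic Theorem is quoted there as a classical background result (Birkhoff's pointwise ergodic theorem, cited from the literature) and is then used as a black box inside the proofs of Theorems 3.3 and 3.6. What you have supplied is the standard textbook derivation, and every step checks out: the reduction to the left shift on path space with $\boldsymbol{Z}_j = \pi_0 \circ T^{\,j}$, the observation that admissibility places the coordinate functions in $L^\infty(\mu) \subset L^1(\mu)$ so integrability is never in question, the upgrade of the paper's Ces\`aro-averaged mixing condition to triviality of shift-invariant events via $\mu(E) = \mu(E)^2$, the $D_{\alpha,\beta}$ argument (where you correctly note that the maximal inequality is applied to the restricted system on the $T$-invariant set $D_{\alpha,\beta}$, which is the one point where a careless write-up usually goes wrong), and Garsia's proof of the Maximal Ergodic Theorem. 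Two further things you handled implicitly and correctly: the vector-valued statement follows componentwise, and the independence of the limit from $i$ follows from stationarity by relabelling. You also quietly repaired the paper's definition of ergodicity, which as printed uses $\ell$ both as the block length and as the averaging index; your reading ($\frac{1}{\ell}\sum_{k=0}^{\ell-1}\mu(\mathcal{A} \cap T^{-k}\mathcal{B}) \to \mu(\mathcal{A})\mu(\mathcal{B})$) is the intended one and is equivalent to ergodicity in the usual sense. In short, your argument is self-contained where the paper simply cites; nothing is missing.
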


Our result holds in the following supervised learning context. Given time series data $z_k$ (from an admissible, stationary, ergodic, bounded process $\boldsymbol{Z}$) and a time series of targets $r_k$ depending on all previous data $\ldots,z_{k-2}, z_{k-1}$ we wish to approximate the mapping from $\ldots,z_{k-2}, z_{k-1}$ to $r_k$. This mapping is denoted~$\mathcal{R}$. Our result states that an ESN with weights $\boldsymbol{A},\boldsymbol{C}$ and biases $\boldsymbol{\zeta}$ randomly generated by the procedure in definition~\ref{proc:procedure}, which is fed the training data $z_k$, and then trained by regularised least squares, will yield a matrix $W$. This ESN equipped with the matrix $W$ (denoted $H^{\boldsymbol{A},\boldsymbol{C},\boldsymbol{\zeta}}_W$) will approximate the relationship~$\mathcal{R}$ between data points $\ldots,z_{k-2}, z_{k-1}$ and targets $r_k$ as closely as required.

\begin{theorem}
    \label{generalised_training_theorem}
    Suppose that $\boldsymbol{Z}$ is an admissible input process, that is also stationary and ergodic, with invariant measure $\mu$. Let $\mathcal{R} : (D_n)^{\mathbb{Z}} \to \mathbb{R}$ (where $D_n$ is a compact subset of $\mathbb{R}^n$) be CTI, $\mu$-measurable, and satisfy $\mathbb{E}_{\mu}[|\mathcal{R}(\boldsymbol{Z})|^2] < \infty$. Let $z$ be an arbitrary realisation of $\boldsymbol{Z}$
    
    Then for any $\epsilon > 0$ and $\delta \in (0,1)$ there exist $N,T_0 \in \mathbb{N}$, $R>0$, $\lambda^* > 0$ and $\ell \in \mathbb{N}$ such that the ESN with parameters $\boldsymbol{A},\boldsymbol{C},\boldsymbol{\zeta}$ generated by the procedure in Definition~\ref{proc:procedure} (with inputs $N,T_0,R$), and $W^*_{\ell} \in \mathbb{R}^{2(d(T_0+1)+N)}$ which minimises (over $W\in \mathbb{R}^{2(d(T_0+1)+N)}$) the least squares problem
    \begin{align}
        \frac{1}{\ell}\sum_{k=0}^{\ell - 1} \left\lVert H_{W}^{\boldsymbol{A},\boldsymbol{C},\boldsymbol{\zeta}} T^{-k}(z) - \mathcal{R} T^{-k}(z) \right\rVert^2 + \lambda \left\lVert W \right\rVert^2, \nonumber
    \end{align}
    (where $\lambda \in (0,\lambda^*)$) satisfies
with probability $(1-\delta)$ the inequality
    \begin{align}
        \mathbb{E}_{\mu}\left[ \left. \left\lVert H_{W^*_{\ell}}^{\boldsymbol{A},\boldsymbol{C},\boldsymbol{\zeta}}(\boldsymbol{Z}) - \mathcal{R}(\boldsymbol{Z}) \right\rVert^2 \right| \boldsymbol{A},\boldsymbol{C},\boldsymbol{\zeta} \right] < \epsilon. \nonumber
    \end{align}
\end{theorem}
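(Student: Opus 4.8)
The plan is to bolt the Ergodic Theorem onto Theorem~\ref{general_ESN_approximation_theorem}: Theorem~\ref{general_ESN_approximation_theorem} tells us that the \emph{population} least-squares problem has a good solution, and ergodicity tells us that the \emph{empirical} regularised least-squares problem that defines $W^*_\ell$ converges to that population problem as $\ell\to\infty$ and $\lambda\downarrow0$; the conclusion should then drop out by continuity. The first thing to record is that, because $\boldsymbol{Z}$ is admissible and the weights come from Procedure~\ref{proc:procedure}, the reservoir state is confined to a compact set: the $\pm$-copies of the data window $(z_k,\dots,z_{k-T_0})$ stored in the "linear" block are bounded by $M_{T_0}$, and the remaining coordinates are ReLU images of affine maps of that window with slopes of norm $\le R$ and biases of size $\le\max(M_{T_0}R,1)$, hence also bounded. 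Thus there is a constant $K=K(N,T_0,R)$ with $\lVert H^{\boldsymbol{A},\boldsymbol{C},\boldsymbol{\zeta}}(z)\rVert\le K$ for $\mu$-a.e.\ $z$, so $H^{\boldsymbol{A},\boldsymbol{C},\boldsymbol{\zeta}}(\boldsymbol{Z})$, $H^{\boldsymbol{A},\boldsymbol{C},\boldsymbol{\zeta}}(\boldsymbol{Z})H^{\boldsymbol{A},\boldsymbol{C},\boldsymbol{\zeta}}(\boldsymbol{Z})^{\top}$, $H^{\boldsymbol{A},\boldsymbol{C},\boldsymbol{\zeta}}(\boldsymbol{Z})\,\mathcal{R}(\boldsymbol{Z})$ and $\mathcal{R}(\boldsymbol{Z})^2$ are all $\mu$-integrable (the last two using $\mathcal{R}\in L^2(\mu)$ and that $\mu$ is a probability measure).

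Next I would pass from the empirical to the population problem. Since $H^{\boldsymbol{A},\boldsymbol{C},\boldsymbol{\zeta}}$ and $\mathcal{R}$ are CTI, the sequence $\big(H^{\boldsymbol{A},\boldsymbol{C},\boldsymbol{\zeta}}T^{-k}(\boldsymbol{Z}),\mathcal{R}T^{-k}(\boldsymbol{Z})\big)_{k\in\mathbb{Z}}$ is a factor of the stationary ergodic process $\boldsymbol{Z}$ under the (backward) shift, hence is itself stationary and ergodic, and the Ergodic Theorem applies to it coordinatewise. Writing $H_k:=H^{\boldsymbol{A},\boldsymbol{C},\boldsymbol{\zeta}}T^{-k}(z)$ and $\mathcal{R}_k:=\mathcal{R}T^{-k}(z)$, for $\mu$-a.e.\ realisation $z$ we get
\begin{align*}
M_\ell:=\tfrac1\ell\sum_{k=0}^{\ell-1}H_kH_k^{\top}\ \to\ M:=\mathbb{E}_\mu[HH^{\top}], &\qquad
b_\ell:=\tfrac1\ell\sum_{k=0}^{\ell-1}H_k\mathcal{R}_k\ \to\ b:=\mathbb{E}_\mu[H\mathcal{R}], \\
\tfrac1\ell\sum_{k=0}^{\ell-1}\mathcal{R}_k^2\ &\to\ \mathbb{E}_\mu[\mathcal{R}^2].
\end{align*}
The empirical objective is strictly convex in $W$ for $\lambda>0$, so its unique minimiser is $W^*_\ell=(M_\ell+\lambda I)^{-1}b_\ell$, and since $M\succeq0$ the map is continuous at $\lambda>0$, giving $W^*_\ell\to W_\lambda:=(M+\lambda I)^{-1}b$. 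The population risk $\rho(W):=\mathbb{E}_\mu[\lVert W^{\top}H-\mathcal{R}\rVert^2]=W^{\top}MW-2W^{\top}b+\mathbb{E}_\mu[\mathcal{R}^2]$ is a continuous (quadratic) function of $W$, so $\rho(W^*_\ell)\to\rho(W_\lambda)$.

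Then I would fix the parameters. Given $\epsilon,\delta$, apply Theorem~\ref{general_ESN_approximation_theorem} with tolerance $\sqrt{\epsilon/2}$ and confidence $1-\delta/2$ to get $N,T_0,R$ and an event $E$ with $\mathbb{P}(E)\ge1-\delta/2$ on which the ESN has the ESP and some $W_G$ satisfies $\rho(W_G)<\epsilon/2$. Because $W_\lambda$ minimises $\rho+\lambda\lVert\cdot\rVert^2$ we have $\rho(W_\lambda)\le\rho(W_G)+\lambda\lVert W_G\rVert^2<\epsilon/2+\lambda\lVert W_G\rVert^2$. Since $\lVert W_G\rVert$ is a.s.\ finite on $E$, choose $B$ with $\mathbb{P}(E\cap\{\lVert W_G\rVert\le B\})\ge1-\delta$ and set $\lambda^*:=\epsilon/\big(2(B^2+1)\big)$; then for every $\lambda\in(0,\lambda^*)$ we have $\rho(W_\lambda)<\epsilon$ on that event. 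Finally, on that event $\rho(W^*_\ell)\to\rho(W_\lambda)<\epsilon$ a.s., so $\mathbb{P}(\rho(W^*_\ell)<\epsilon)\to\mathbb{P}(E\cap\{\lVert W_G\rVert\le B\})\ge1-\delta$ by dominated convergence, and picking $\ell$ large enough yields $\mathbb{P}(\rho(W^*_\ell)<\epsilon)\ge1-\delta$, which is the asserted inequality.

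The main obstacle is the bookkeeping in this last step rather than any single hard estimate: the convergence $W^*_\ell\to W_\lambda$ is \emph{not} uniform as $\lambda\downarrow0$ (indeed $\lVert(M_\ell+\lambda I)^{-1}\rVert=O(1/\lambda)$ and $\lVert W^*_\ell\rVert$ may grow as the Gram matrix degenerates), so the order of quantifiers must be respected — one first freezes $N,T_0,R$, then $\lambda\in(0,\lambda^*)$, and only then takes $\ell$ large — and one must trim the several exceptional sets (the failure set of Theorem~\ref{general_ESN_approximation_theorem}, $\{\lVert W_G\rVert>B\}$, and the Ergodic-Theorem null set) carefully so that the final probability is exactly $1-\delta$ rather than $1-\delta-o(1)$. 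Everything else is a routine translation of "the best readout is good" (Theorem~\ref{general_ESN_approximation_theorem}) plus "empirical averages converge to expectations" (ergodicity) into "the regularised least-squares readout is good."
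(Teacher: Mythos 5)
Your proposal is correct and follows essentially the same route as the paper: the paper proves this theorem as the $\gamma=0$ special case of Theorem~\ref{offline_q_learning}, whose proof likewise combines the existence of a good readout from Theorem~\ref{general_ESN_approximation_theorem}, the Ergodic Theorem to pass between empirical and population objectives, the minimising property of the regularised least-squares solution, and a choice of $\lambda^*$ small enough that the penalty term is negligible. Your explicit closed-form treatment of $W^*_\ell=(M_\ell+\lambda I)^{-1}b_\ell$ is a cleaner substitute for the paper's appeal to ``continuity of $\argmin$'' and its interchange of limit and $\argmin$, and your closing remarks about quantifier order and trimming the exceptional sets address bookkeeping that the paper itself glosses over.
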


\begin{proof}
    Later in this paper, we state and prove a more general result (Theorem \ref{offline_q_learning}) which reduces to this result in the special case $\gamma = 0$.
\end{proof}

In summary, we have stated that for any $\epsilon>0$ and $\delta \in (0,1)$ there exists an ESN of dimension $n = 2(d(T_0+1)+N)$ with output layer $W$ trained by the Tikhonov-regularised least squares procedure against $\ell$ training points, whose output functional approximates the target arbitrarily closely with arbitrarily high probability. The theorem is (sadly) non constructive in the sense that the number of neurons $n$, number of training points $\ell$ and regularisation parameter $\lambda^*$ are not computed for a given $\epsilon$ and $\delta$. Ideally, we would establish uniform bounds on the number of number of neurons $n$ and data points $\ell$ required for an approximation with tolerance $\epsilon$ to hold with probability $\delta$. Though less ideal, one could establish an asymptotic order of convergence using the central limit theorem (CLT). The CLT (roughly) states that the error between the time average and the space average of a stationary ergodic process converges in law to a normal distribution with standard deviation of the order $1/\sqrt{\ell}$ as the number of data points $\ell$ grows to infinity. The CLT is stated below. 

\begin{theorem}
    (Central Limit Theorem \cite{mcgoff2015}) If $(\boldsymbol{Z}_k)_{k \in \mathbb{Z}}$ is a stationary ergodic process then there exists a covariance matrix $\Sigma$ such that for any $i \in \mathbb{Z}$ and Borel set $A$
    \begin{align*}
        \lim_{\ell \to \infty}\mathbb{P}\bigg(\frac{1}{\sqrt{\ell}}\sum_{k=0}^{\ell-1} (\boldsymbol{Z}_{i+k} - \mathbb{E}_{\mu}[\boldsymbol{Z}_i])\bigg) = \mathbb{P} \big( \mathcal{N}(0,\Sigma) \in A \big).
    \end{align*}
    In other words, the random variables
    \begin{align*}
        \frac{1}{\sqrt{\ell}}\sum_{k=0}^{\ell-1} (\boldsymbol{Z}_{i+k} - \mathbb{E}_{\mu}[\boldsymbol{Z}_i])
    \end{align*}
    converge in distribution to the multivariate normal $\mathcal{N}(0,\Sigma)$ as $\ell \to \infty$.
\end{theorem}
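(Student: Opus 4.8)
The plan is to prove the multivariate CLT by the martingale approximation technique (Gordin's method) combined with the martingale central limit theorem. Write $T$ for the time-shift and $\mathcal{F}_k = \sigma(\boldsymbol{Z}_j : j \le k)$ for the natural filtration, and center the process by setting $Y_k = \boldsymbol{Z}_{i+k} - \mathbb{E}_{\mu}[\boldsymbol{Z}_i]$, so that $\mathbb{E}_{\mu}[Y_k] = 0$ and the quantity of interest is $S_\ell/\sqrt{\ell}$ with $S_\ell = \sum_{k=0}^{\ell-1} Y_k$. The first step is to decompose each $Y_k$ as a martingale difference plus a telescoping remainder: I would seek a measurable function $g$ of the path and a martingale difference sequence $(M_k)$ adapted to $(\mathcal{F}_k)$ with $Y_k = M_k + g\circ T^{k} - g\circ T^{k-1}$. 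The standard construction takes $M_k = \sum_{j \ge 0}\big(\mathbb{E}[Y_{k+j}\mid\mathcal{F}_k] - \mathbb{E}[Y_{k+j}\mid\mathcal{F}_{k-1}]\big)$, which is a genuine martingale difference since $\mathbb{E}[M_k\mid\mathcal{F}_{k-1}]=0$, and which is well defined provided the conditional expectations are summable in $L^2$.

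Granting the decomposition, the remainder telescopes: $\sum_{k=0}^{\ell-1}(g\circ T^k - g\circ T^{k-1}) = g\circ T^{\ell-1} - g\circ T^{-1}$. By stationarity this remainder is bounded in $L^2$ uniformly in $\ell$, so after dividing by $\sqrt{\ell}$ it tends to zero in probability and is asymptotically negligible. By Slutsky's theorem it therefore suffices to prove the CLT for the martingale part $\frac{1}{\sqrt{\ell}}\sum_{k=0}^{\ell-1} M_k$. Here I would invoke the martingale central limit theorem (in the form due to Brown, or as in Billingsley): the two hypotheses to check are a conditional Lindeberg condition and the convergence of the normalized quadratic variation $\frac{1}{\ell}\sum_{k=0}^{\ell-1} M_k M_k^{\top}$ to a deterministic limit. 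This is exactly where \emph{ergodicity} enters: since $(M_k M_k^{\top})_k$ is a stationary sequence, being a fixed measurable function of the shifted path, the Ergodic Theorem stated above gives $\frac{1}{\ell}\sum_{k=0}^{\ell-1} M_k M_k^{\top} \to \mathbb{E}_{\mu}[M_0 M_0^{\top}] =: \Sigma$ almost surely, which simultaneously identifies the limiting covariance matrix $\Sigma$.

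The main obstacle is the existence of the martingale approximation itself. Stationarity and ergodicity \emph{alone} are not sufficient for a CLT — one can build stationary ergodic sequences whose normalized partial sums fail to be asymptotically normal — so the summability of $\sum_{j\ge 0}\lVert \mathbb{E}[Y_j\mid\mathcal{F}_0]\rVert_{L^2}$ (Gordin's condition) is an extra hypothesis that must be supplied. In applications it is typically guaranteed by a mixing condition on $\boldsymbol{Z}$, such as $\alpha$- or $\phi$-mixing with summable coefficients. I would therefore either (i) state the result under the implicit understanding, as in the cited reference, that such a mixing/summability condition holds and verify it for the admissible processes used elsewhere in the paper, or (ii) present the argument above as the template and defer to \cite{mcgoff2015} for the precise regularity hypotheses under which the decomposition exists. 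Once the martingale approximation is in hand, the remaining pieces — telescoping of the coboundary, the ergodic theorem for the quadratic variation, and the martingale CLT — are routine, and the multivariate statement reduces to the scalar one by the Cram\'er--Wold device applied to arbitrary linear combinations $\theta^{\top} Y_k$.
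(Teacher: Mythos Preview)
The paper does not prove this statement at all: it is quoted as a classical result from the survey \cite{mcgoff2015} and invoked only heuristically, to suggest a $1/\sqrt{\ell}$ rate of convergence in the discussion following Theorem~\ref{generalised_training_theorem}. So there is no ``paper's own proof'' to compare against; your proposal goes well beyond what the authors attempt.

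That said, your write-up is more honest than the statement itself. You correctly flag the essential gap: stationarity and ergodicity alone do \emph{not} imply a CLT, and Gordin's martingale-coboundary decomposition requires an additional summability condition on $\sum_{j\ge 0}\lVert \mathbb{E}[Y_j\mid\mathcal{F}_0]\rVert_{L^2}$ (or a mixing hypothesis that implies it). The theorem as printed in the paper is therefore literally false without extra assumptions, and your option (ii) --- treat the argument as a template and defer to the cited reference for the precise regularity conditions --- is the right resolution. The remaining steps you outline (telescoping coboundary, ergodic theorem for the bracket, martingale CLT, Cram\'er--Wold) are standard and correct once the decomposition exists.
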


This suggests that the approximation of the target functional $\mathcal{R}$ also converges with order $1/\sqrt{\ell}$ as the number of data points increases. Furthermore, related results by \cite{Gonon2020} use the CLT to establish uniform bounds on the number of neurons $n = 2(d(T+1) + N)$ required for a given approximation. This strongly suggests that the approximation in Theorem \ref{generalised_training_theorem} converges with order $1/\sqrt{N}$. 

We will now pivot towards our second novel result, which generalises the first. Suppose that we have a contraction mapping $\Phi$ on the space of functionals, and we seek a $W^*$ such that the ESN functional $H_{W^*}^{\boldsymbol{A},\boldsymbol{C},\boldsymbol{\zeta}}$ approximates the unique fixed point of $\Phi$. The existence of the unique fixed point is guaranteed by Banach's fixed point theorem. Finding the fixed point of a contraction mapping has applications in reinforcement learning because the optimal value function (and optimal quality function) of a Markov Decision Process (MDP) is a fixed point of a \emph{Bellman operator}. The theory we are presenting here can be viewed as a generalisation of an MDP because the input processes we are considering may have long time correlations (violating the Markov property) which can only be recognised by filters with sufficiently long and robust memories; like Echo State Networks.

We can observe first of all if $\Phi$ is the constant map $\Phi(H) = \mathcal{R}$, then $\Phi$ is clearly a contraction mapping with fixed point $\mathcal{R}$. In this case, the problem is exactly the same as that solved by Theorem \ref{generalised_training_theorem}.
We are especially interested in the case of $\Phi$ taking the form of the \emph{Bellman Value operator}. To make this formal, we will consider a stationary ergodic process $\boldsymbol{Z}$ with invariant measure $\mu$. Then we define the map $T_{\boldsymbol{Z}}$ as a CTI filter on the bi-infinite sequences $(D_N)^{\mathbb{Z}}$, which returns the random variable:
\begin{align*}
    T_{\boldsymbol{Z}}(z)_k =
    \begin{cases}
        T_{\boldsymbol{Z}}(z)_{k+1} &\text{ if } k < 0 \\
        \boldsymbol{Z}_{k+1} \ | \ \boldsymbol{Z}_j = z_j \ \forall j \leq 0 &\text{ if } k \geq 0.
    \end{cases}
\end{align*}
Next, we introduce $\mathcal{R} : (D_N)^{\mathbb{Z}} \to \mathbb{R}$ as the CTI reward functional, giving a reward (or expectation over a distribution of rewards) to an agent that has observed a given sequence of (reward,  action, observation) triples. We let $\gamma \in [0,1)$ denote the discount factor, and define the operator
\begin{align}
    \Phi(H)(z) := \mathcal{R}(z) + \gamma \mathbb{E}_{\mu}[ HT_{\boldsymbol{Z}}(z)]. \label{Phi}
\end{align}

In this case, $\Phi$ is a contraction mapping with Lipschitz constant $\gamma$. With this, we will define the CTI value functional $V : (D_N)^{\mathbb{Z}} \to \mathbb{R}$ (with respect to the process $\boldsymbol{Z}$) as
\begin{align*}
    V(z) &:= \mathbb{E}_{\mu}\bigg[ \sum_{k=0}^{\infty} \gamma^k \mathcal{R}T^k(\boldsymbol{Z}) \ \bigg| \ \boldsymbol{Z}_j = z_j \ \forall j \leq 0 \bigg].
\end{align*}
The value functional $V$ takes a sequence of (reward, action, observation) triples and returns the expected discounted sum of future rewards.
Furthermore, the value function $V$ is the unique fixed point of the Bellman operator $\Phi$. Re-arranging the definition of $V(z)$ above, we have that:
\begin{align*}
    V(z) &= \mathbb{E}_{\mu}\bigg[ \sum_{k=0}^{\infty} \gamma^k \mathcal{R}T^k(\boldsymbol{Z}) \ \bigg| \ \boldsymbol{Z}_j = z_j \ \forall j \leq 0 \bigg] \\
    &= \mathbb{E}_{\mu}\bigg[ \sum_{k=1}^{\infty} \gamma^k \mathcal{R}T^k(\boldsymbol{Z}) \ \bigg| \ \boldsymbol{Z}_j = z_j \ \forall j \leq 0 \bigg] + \mathcal{R}(z) \\
    &= \gamma \mathbb{E}_{\mu}\bigg[ \sum_{k=0}^{\infty} \gamma^k \mathcal{R}T^{k+1}(\boldsymbol{Z}) \ \bigg| \ \boldsymbol{Z}_j = z_j \ \forall j \leq 0 \bigg] + \mathcal{R}(z) \\
    &= \gamma \mathbb{E}_{\mu}\bigg[ \sum_{k=0}^{\infty} \gamma^k \mathcal{R}T^k(\boldsymbol{Z}) \ \bigg| \ \boldsymbol{Z}_j = z_j \ \forall j < 0 \bigg] + \mathcal{R}(z) \\
\end{align*}
where we have carried out straightforward relabellings of the indexing of terms in the sum by $k$. Then by the law of total expectation we may write this last expression as
\begin{align*}    
    V(z) &=\gamma\mathbb{E}_{\mu}\bigg[\mathbb{E}_{\mu}\bigg[ \sum^{\infty}_{k=0}\gamma^k \mathcal{R}T^k(\boldsymbol{Z}) \ \bigg| \ \boldsymbol{Z}_j = T_{\boldsymbol{Z}}(z)_j \ \forall j \leq 0 \bigg]\bigg] + \mathcal{R}(z) \\
    &= \gamma \mathbb{E}_{\mu}[VT_{\boldsymbol{Z}}(z)] + \mathcal{R}(z)
    = \Phi(V)(z),
\end{align*}
which shows that $V$ is indeed a fixed point of $\Phi$, and so is the unique such, since $\Phi$ is a contraction.

Our goal is now to seek a $W^*$ such that the ESN functional $H^{\boldsymbol{A},\boldsymbol{C},\boldsymbol{\zeta}}_{W^*}$ closely approximates the unique fixed point $V$ of $\Phi$. One approach is to collect a dataset from a single training trajectory, and then perform least squares regression to find $W^*$. This is an example of \emph{offline} learning (in the reinforcement learning parlance) because the training occurs after the data has been collected. This is in contrast to \emph{online} learning where training takes place dynamically as new data becomes available. We will make this offline approach formal in the following theorem.

\begin{theorem}
\label{offline_q_learning}
    Suppose that $\boldsymbol{Z}$ is an admissible input process, that is also stationary and ergodic with invariant measure $\mu$. Let $\mathcal{R} : (D_N)^{\mathbb{Z}} \to \mathbb{R}$ be $\mu$-measurable and satisfy $\mathbb{E}[|\mathcal{R}(\boldsymbol{Z})|^2] < \infty$ and define $\Phi$ using \eqref{Phi}
    on the $\mu$-measurable functionals $H$ that satisfy $\mathbb{E}_{\mu}[|H(\boldsymbol{Z})|^2] < \infty$. Let $\gamma \in [0,1)$. Let $z$ be an arbitrary realisation of $\boldsymbol{Z}$
    
    Then for any $\epsilon > 0$, $\delta \in (0,1)$ there exists $N,T_0 \in \mathbb{N}, R, \lambda^* > 0$ and $\ell \in \mathbb{N}$ such that the ESN with parameters $\boldsymbol{A},\boldsymbol{C},\boldsymbol{\zeta}$ generated by procedure \ref{proc:procedure} (with inputs $N,T_0,R$), and $W^{*}_{\ell} \in \mathbb{R}^{2(d(T_0+1)+N)}$ minimising (over $W \in \mathbb{R}^{2(d(T_0+1)+N)}$) the least squares problem
    \begin{align}
        \frac{1}{\ell}\sum_{k = 0}^{\ell - 1} \left\lVert W^{\top} (H^{\boldsymbol{A},\boldsymbol{C},\boldsymbol{\zeta}}T^{-k}(z) - \gamma H^{\boldsymbol{A},\boldsymbol{C},\boldsymbol{\zeta}}T^{1-k}(z)) - \mathcal{R}(z) \right\rVert^2 + \lambda \lVert W \rVert^2 \nonumber
    \end{align}
    where $\lambda \in (0,\lambda^*)$, then with probability $(1-\delta)$
    \begin{align}
        \mathbb{E}_{\mu}\left[ \left. \left\lVert H_{W^*_{\ell}}^{\boldsymbol{A},\boldsymbol{C},\boldsymbol{\zeta}}(\boldsymbol{Z}) - \Phi H_{W^*_{\ell}}^{\boldsymbol{A},\boldsymbol{C},\boldsymbol{\zeta}}(\boldsymbol{Z}) \right\rVert^2 \right| \boldsymbol{A},\boldsymbol{C},\boldsymbol{\zeta} \right] < \epsilon. \nonumber
    \end{align}
\end{theorem}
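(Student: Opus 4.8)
The plan is to run the $\gamma=0$ argument (which this theorem is meant to subsume) with one extra ingredient: the contraction property of $\Phi$, used to convert ``closeness to the fixed point $V$'' into ``small Bellman residual''. First I would record that the value functional $V(z)=\mathbb{E}_\mu\!\big[\sum_{k\ge0}\gamma^k\mathcal{R}T^k(\boldsymbol{Z})\,\big|\,\boldsymbol{Z}_j=z_j,\ j\le0\big]$ is CTI and square-integrable, since by the triangle inequality in $L^2(\mu)$ and stationarity $\lVert V\rVert_{L^2(\mu)}\le(1-\gamma)^{-1}\lVert\mathcal{R}\rVert_{L^2(\mu)}<\infty$. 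Hence Theorem~\ref{general_ESN_approximation_theorem} applies to $V$: for any $\eta>0$ there exist $N,T_0,R$ such that, with probability $1-\delta$, Procedure~\ref{proc:procedure} yields an ESN with the ESP together with a readout $W^\dagger$ satisfying $\lVert H^{\boldsymbol{A},\boldsymbol{C},\boldsymbol{\zeta}}_{W^\dagger}-V\rVert_{L^2(\mu)}<\eta$. All subsequent steps take place on this $(1-\delta)$-probability event.

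Next I would verify that $\Phi$ is a $\gamma$-contraction on $L^2(\mu)$: for functionals $G_1,G_2$ one has $\Phi G_1-\Phi G_2=\gamma\,\mathbb{E}_\mu[(G_1-G_2)T_{\boldsymbol{Z}}(\cdot)]$, and conditional Jensen together with the stationarity identity $\mathbb{E}_\mu[\mathbb{E}_\mu[f\,T_{\boldsymbol{Z}}(\cdot)]]=\mathbb{E}_\mu[f(\boldsymbol{Z})]$ give $\lVert\Phi G_1-\Phi G_2\rVert_{L^2(\mu)}\le\gamma\lVert G_1-G_2\rVert_{L^2(\mu)}$. Since $V=\Phi V$, this yields $\lVert H-\Phi H\rVert_{L^2(\mu)}\le(1+\gamma)\lVert H-V\rVert_{L^2(\mu)}$ for every $H$, so $W^\dagger$ already has Bellman residual below $(1+\gamma)\eta$. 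The key reduction is then: writing $\tilde H:=H^{\boldsymbol{A},\boldsymbol{C},\boldsymbol{\zeta}}-\gamma H^{\boldsymbol{A},\boldsymbol{C},\boldsymbol{\zeta}}T$ and letting $\mathcal{F}_{\le0}$ denote the past $\sigma$-algebra of $\boldsymbol{Z}$, causality of $H^{\boldsymbol{A},\boldsymbol{C},\boldsymbol{\zeta}}_W$ and of $\mathcal{R}$ gives $H_W-\Phi H_W=\mathbb{E}_\mu[W^\top\tilde H(\boldsymbol{Z})-\mathcal{R}(\boldsymbol{Z})\mid\mathcal{F}_{\le0}]$, so by conditional Jensen $\mathbb{E}_\mu[\lVert H_W-\Phi H_W\rVert^2]\le\mathbb{E}_\mu[\lVert W^\top\tilde H(\boldsymbol{Z})-\mathcal{R}(\boldsymbol{Z})\rVert^2]$. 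It therefore suffices to bound the population least-squares value attained by $W^*_\ell$.

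The passage from the empirical to the population problem is where the Ergodic Theorem enters. The empirical loss $\widehat L_\ell(W)=\tfrac1\ell\sum_{k=0}^{\ell-1}\lVert W^\top\tilde HT^{-k}(z)-\mathcal{R}T^{-k}(z)\rVert^2+\lambda\lVert W\rVert^2$ is a convex quadratic in $W$ whose coefficient arrays are time averages of $\tilde HT^{-k}(z)\tilde HT^{-k}(z)^\top$, $\tilde HT^{-k}(z)\,\mathcal{R}T^{-k}(z)$ and $\mathcal{R}T^{-k}(z)^2$; since $\tilde H$ is bounded (the reservoir states are bounded via the admissibility bound $M_{T_0}$) and $\mathcal{R}\in L^2(\mu)$, the Ergodic Theorem gives almost-sure convergence of each array, hence $\widehat L_\ell\to L$ pointwise, with $L(W)=\mathbb{E}_\mu[\lVert W^\top\tilde H(\boldsymbol{Z})-\mathcal{R}(\boldsymbol{Z})\rVert^2]+\lambda\lVert W\rVert^2$. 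Because $\lambda>0$ makes both problems strongly convex, the minimisers converge, $W^*_\ell\to W^*_\lambda:=\argmin_W L$ almost surely, so for $\ell$ large enough $L(W^*_\ell)\le L(W^*_\lambda)+\tfrac{\epsilon}{2}\le L(W^\dagger)+\tfrac{\epsilon}{2}$, and combining with the reduction above, $\mathbb{E}_\mu[\lVert H_{W^*_\ell}-\Phi H_{W^*_\ell}\rVert^2]\le L(W^\dagger)+\tfrac{\epsilon}{2}$.

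The main obstacle is the final inequality: $L(W^\dagger)$ is not automatically small. Writing $H_{W^\dagger}=V+e$ with $\lVert e\rVert_{L^2(\mu)}<\eta$, the fitted residual decomposes as $(V-\gamma VT-\mathcal{R})+(e-\gamma eT)=-\gamma\xi_V+(e-\gamma eT)$, where $\xi_V:=VT(\boldsymbol{Z})-\mathbb{E}_\mu[VT(\boldsymbol{Z})\mid\mathcal{F}_{\le0}]$ is the one-step innovation of the value functional --- the classical ``double sampling'' term, whose $L^2$ norm is not controlled by $\eta$. It vanishes exactly when the observation process is deterministic given its past (as for Bee World), in which case $L(W^\dagger)\le((1+\gamma)\eta)^2+\lambda\lVert W^\dagger\rVert^2$ and choosing $\eta$ then $\lambda$ small completes the argument. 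For genuinely stochastic $\boldsymbol{Z}$ I would instead run the regression as a fixed-point iteration --- at step $i$ minimise $\tfrac1\ell\sum_k\lVert W^\top HT^{-k}(z)-(\gamma(W^{(i)})^\top HT^{1-k}(z)+\mathcal{R}T^{-k}(z))\rVert^2+\lambda\lVert W\rVert^2$ --- so that the innovation-carrying term is frozen in the target; in the $\ell\to\infty$ limit each step is the composition of $\Phi$ with (near-)orthogonal projection onto the span of the ESN features, a contraction of modulus close to $\gamma$, so Banach's theorem gives iterates converging to a functional whose Bellman residual inherits an $O(\eta)$ bound from $H_{W^\dagger}$. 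In either route one finally assembles the error budget --- the approximation level $\eta$, the regularisation term $\lambda\lVert W^\dagger\rVert^2$, the ergodic-averaging tolerance, and (in the iterated version) a geometric tail --- below $\epsilon$, yielding the claim with probability $1-\delta$.
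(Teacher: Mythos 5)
Your proposal follows the same overall route as the paper's proof: approximate the fixed point $V$ of $\Phi$ by a readout $W^\dagger$ via Theorem~\ref{general_ESN_approximation_theorem}, convert closeness to $V$ into a small Bellman residual using the contraction property, pass from the empirical to the population regression problem via the Ergodic Theorem and strong convexity of the regularised objective, and assemble an error budget. Where you differ is in the handling of the identity linking the Bellman residual to the regression loss, and there your version is the more careful one. Your conditional Jensen step,
\begin{align*}
\mathbb{E}_{\mu}\left[\left\lVert H_W - \Phi H_W\right\rVert^2\right] \leq \mathbb{E}_{\mu}\left[\left\lVert W^{\top}\bigl(H^{\boldsymbol{A},\boldsymbol{C},\boldsymbol{\zeta}}(\boldsymbol{Z}) - \gamma H^{\boldsymbol{A},\boldsymbol{C},\boldsymbol{\zeta}}T(\boldsymbol{Z})\bigr) - \mathcal{R}(\boldsymbol{Z})\right\rVert^2\right],
\end{align*}
is asserted in the paper as an \emph{equality}: the paper treats $\Phi H_W(z) = \mathcal{R}(z) + \gamma\,\mathbb{E}_{\mu}[H_W T_{\boldsymbol{Z}}(z)]$ as interchangeable with $\mathcal{R}(z) + \gamma H_W T(z)$, and these differ by the one-step innovation whenever $\boldsymbol{Z}$ is genuinely stochastic.

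More importantly, the ``main obstacle'' you flag is real, and it is present in the paper's own argument. At the end of the paper's chain the quantity $\mathbb{E}_{\mu}[\lVert W^{\top}(H^{\boldsymbol{A},\boldsymbol{C},\boldsymbol{\zeta}} - \gamma H^{\boldsymbol{A},\boldsymbol{C},\boldsymbol{\zeta}}T) - \mathcal{R}\rVert^2]$ is identified with $\mathbb{E}_{\mu}[\lVert H_W - \Phi H_W\rVert^2]$ and bounded by $\epsilon/5$ using \eqref{H-PhiH}; but the former exceeds the latter by $\gamma^2$ times the expected conditional variance of $H_W T(\boldsymbol{Z})$ given the past --- precisely your double-sampling term $\xi_V$ --- and this is not controlled by the approximation level $\eta$. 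Your diagnosis is therefore correct: the single-shot regression argument closes cleanly only when the observation process is deterministic given its past (as in Bee World) or when $\gamma = 0$, which is the case that recovers Theorem~\ref{generalised_training_theorem}. Your proposed repair via fitted value iteration with frozen targets is a standard and sound way around the obstacle, but be aware that it analyses a different estimator from the one specified in the theorem statement, so it does not literally establish the result as written; to keep the stated estimator one would need either to carry the conditional-variance term explicitly into the error bound (and argue it is comparable for $W^*_\ell$ and $W^\dagger$) or to restrict the class of admissible processes. Within the deterministic setting your argument is complete and correct.
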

\begin{proof}
    First let $V$ be the unique fixed point of the contraction mapping $\Phi$ whose existence and uniqueness is guaranteed by Banach's fixed point theorem. Denote the Lipschitz constant of $\Phi$ with the symbol $\tau$.
    Then we fix $\epsilon > 0$ and $\delta \in (0,1)$, then by Theorem \ref{general_ESN_approximation_theorem} there exists with probability $(1-\delta)$ a linear readout $W \in \mathbb{R}^{2(d(T_0+1) + N)}$ such that
    \begin{align}
        \mathbb{E}_{\mu}\left[ \left. \left\lVert H_W^{\boldsymbol{A},\boldsymbol{C},\boldsymbol{\zeta}}(\boldsymbol{Z}) - V(\boldsymbol{Z}) \right\rVert^{2} \ \right| \boldsymbol{A} , \boldsymbol{C} , \boldsymbol{\zeta} \right] < \frac{\epsilon}{5(1 + \tau)}. 
        \label{HW-H*}
    \end{align}
    Then it follows that
    \begin{align*}
        \mathbb{E}_{\mu}\left[ \left\lVert H^{\boldsymbol{A},\boldsymbol{C},\boldsymbol{\zeta}}_{W} - \Phi H^{\boldsymbol{A},\boldsymbol{C},\boldsymbol{\zeta}}_{W} \right\rVert^2 | \boldsymbol{A},\boldsymbol{C},\boldsymbol{\zeta} \right]  \\
         & \dawesspace = \mathbb{E}_{\mu}\left[ \left\lVert H^{\boldsymbol{A},\boldsymbol{C},\boldsymbol{\zeta}}_{W}(\boldsymbol{Z}) - \Phi H^{\boldsymbol{A},\boldsymbol{C},\boldsymbol{\zeta}}_{W}(\boldsymbol{Z}) + V(\boldsymbol{Z}) - V(\boldsymbol{Z}) \right\rVert^2 | \boldsymbol{A},\boldsymbol{C},\boldsymbol{\zeta} \right] \\
        &  \dawesspace \leq \mathbb{E}_{\mu}[ \lVert H^{\boldsymbol{A},\boldsymbol{C},\boldsymbol{\zeta}}_{W}(\boldsymbol{Z}) - V(\boldsymbol{Z}) \rVert^2 | \boldsymbol{A},\boldsymbol{C},\boldsymbol{\zeta} ] + \mathbb{E}_{\mu}[ \lVert V(\boldsymbol{Z}) - \Phi H^{\boldsymbol{A},\boldsymbol{C},\boldsymbol{\zeta}}_W(\boldsymbol{Z}) \rVert^2 | \boldsymbol{A},\boldsymbol{C},\boldsymbol{\zeta} ] \\
        &  \dawesspace = \mathbb{E}_{\mu}[ \lVert H^{\boldsymbol{A},\boldsymbol{C},\boldsymbol{\zeta}}_{W}(\boldsymbol{Z}) - V(\boldsymbol{Z}) \rVert^2 | \boldsymbol{A},\boldsymbol{C},\boldsymbol{\zeta} ] + \mathbb{E}_{\mu}[ \lVert \Phi V(\boldsymbol{Z}) - \Phi H^{\boldsymbol{A},\boldsymbol{C},\boldsymbol{\zeta}}_W(\boldsymbol{Z}) \rVert^2 | \boldsymbol{A},\boldsymbol{C},\boldsymbol{\zeta} ] \\
        &  \dawesspace \leq \mathbb{E}_{\mu}[ \lVert H^{\boldsymbol{A},\boldsymbol{C},\boldsymbol{\zeta}}_{W}(\boldsymbol{Z}) - V(\boldsymbol{Z}) \rVert^2 | \boldsymbol{A},\boldsymbol{C},\boldsymbol{\zeta} ] + \tau \mathbb{E}_{\mu}[ \lVert V(\boldsymbol{Z}) - H^{\boldsymbol{A},\boldsymbol{C},\boldsymbol{\zeta}}_W(\boldsymbol{Z}) \rVert^2 | \boldsymbol{A},\boldsymbol{C},\boldsymbol{\zeta} ] \\
        &  \dawesspace = (1 + \tau) \mathbb{E}_{\mu}[ \lVert V(\boldsymbol{Z}) - H^{\boldsymbol{A},\boldsymbol{C},\boldsymbol{\zeta}}_W(\boldsymbol{Z}) \rVert^2 | \boldsymbol{A},\boldsymbol{C},\boldsymbol{\zeta} ]\\
        &  \dawesspace < (1 + \tau) \frac{\epsilon}{5(1+\tau)} \text{ by \eqref{HW-H*}} \\
        &  \dawesspace < \frac{\epsilon}{5}
    \end{align*}
    which yields the estimate 
    \begin{align}
        \mathbb{E}_{\mu}[ \lVert H^{\boldsymbol{A},\boldsymbol{C},\boldsymbol{\zeta}}_{W} - \Phi H^{\boldsymbol{A},\boldsymbol{C},\boldsymbol{\zeta}}_{W} \rVert^2 | \boldsymbol{A},\boldsymbol{C},\boldsymbol{\zeta} ] < \frac{\epsilon}{5}. \label{H-PhiH}
    \end{align}
    Now, we can choose $\lambda^*$ such that for any $\lambda \in (0,\lambda^*)$
    \begin{align}
        \lambda \lVert W \rVert^2 < \frac{\epsilon}{5}.
        \label{W_lambda<eps}
    \end{align}
    Next we define a sequence of vectors $(W^{*}_j)_{j \in \mathbb{N}}$ by
     \begin{align*}
         W^*_{j} = \argmin_{U \in \mathbb{R}^{2(d(T_0+1) + N)}} \bigg( \frac{1}{j} \sum_{k=0}^{j - 1} \lVert H_{U}^{\boldsymbol{A}, \boldsymbol{C}, \boldsymbol{\zeta}} T^{-k}(z) - \gamma H^{\boldsymbol{A},\boldsymbol{C},\boldsymbol{\zeta}}_U T^{1-k}(z) - \mathcal{R}T^{-k}(z) \rVert^2 + \lambda \lVert U \rVert^2 \bigg).
     \end{align*}
     We may view $\argmin$ as continuous map on the space of strictly convex $C^1$ functions that returns their unique minimiser. The regularised linear least squares problem is a strictly convex $C^1$ problem, so we may define $W^*_{\infty} \in \mathbb{R}^{2d(T_0+1)+N}$ by
     \begin{align*}
        W^*_{\infty} &:= \argmin_{U} \bigg( \mathbb{E}_{\mu}[ \lVert H_{U}^{A, C, \zeta} (\boldsymbol{Z}) - \gamma H^{\boldsymbol{A},\boldsymbol{C},\boldsymbol{\zeta}}_U T(\boldsymbol{Z}) - \mathcal{R}(\boldsymbol{Z}) \rVert^2 | \boldsymbol{A},\boldsymbol{C},\boldsymbol{\zeta} ] + \lambda \lVert U \rVert^2 \bigg) \\
        &= \argmin_{U} \lim_{j \to \infty} \bigg( \frac{1}{j} \sum_{k=0}^{j - 1} \lVert H_{U}^{\boldsymbol{A}, \boldsymbol{C}, \boldsymbol{\zeta}} T^{-k}(z) - \gamma H^{\boldsymbol{A},\boldsymbol{C},\boldsymbol{\zeta}}_U T^{1-k} - \mathcal{R}T^{-k}(z) \rVert^2 + \lambda \lVert U \rVert^2 \bigg) \\
        &= \lim_{j \to \infty} \argmin_{U} \bigg( \frac{1}{j} \sum_{k=0}^{j - 1} \lVert H_{U}^{\boldsymbol{A}, \boldsymbol{C}, \boldsymbol{\zeta}} T^{-k}(z) - \gamma H^{\boldsymbol{A},\boldsymbol{C},\boldsymbol{\zeta}}_U T^{1-k} - \mathcal{R}T^{-k}(z) \rVert^2 + \lambda \lVert U \rVert^2 \bigg) \\ 
        &= \lim_{j \to \infty} W^*_j 
     \end{align*}
     where the second and third equalities hold by the Ergodic Theorem and continuity of $\argmin$ respectively.
    Now, we may choose $\ell \in \mathbb{N}$ sufficiently large that
    \begin{multline}
        \left| \mathbb{E}_{\mu}[\lVert W^{*\top}_{\ell} ( H^{\boldsymbol{A},\boldsymbol{C},\boldsymbol{\zeta}}(\boldsymbol{Z}) - \gamma H^{\boldsymbol{A},\boldsymbol{C},\boldsymbol{\zeta}}T(\boldsymbol{Z}) ) - \mathcal{R}(\boldsymbol{Z}) \rVert^2 | \boldsymbol{A},\boldsymbol{C},\boldsymbol{\zeta}] \right. \\
        \left. - \ \mathbb{E}_{\mu}[\lVert W^{*\top}_{\infty} ( H^{\boldsymbol{A},\boldsymbol{C},\boldsymbol{\zeta}}(\boldsymbol{Z}) - \gamma H^{\boldsymbol{A},\boldsymbol{C},\boldsymbol{\zeta}}T(\boldsymbol{Z}) ) - \mathcal{R}(\boldsymbol{Z}) \rVert^2 | \boldsymbol{A},\boldsymbol{C},\boldsymbol{\zeta}] \right| < \frac{\epsilon}{5},
        \label{EW_l-EW_inf<eps}
    \end{multline}
    and
    \begin{multline}
        \left| \lim_{j \to \infty} \bigg( \frac{1}{j} \sum_{k = 0}^{j-1} \lVert W^{*\top}_{j}(H^{\boldsymbol{A},\boldsymbol{C},\boldsymbol{\zeta}}T^{-k}(z) - \gamma H^{\boldsymbol{A},\boldsymbol{C},\boldsymbol{\zeta}}T^{1-k}(z) ) - \mathcal{R}T^{-k}(z) \rVert^2 + \lambda \lVert W^*_{j} \rVert^2 \bigg) \right. \\
        \left. - \ \frac{1}{\ell} \sum_{k = 0}^{\ell-1} \lVert W^{*\top}_{\ell}(H^{\boldsymbol{A},\boldsymbol{C},\boldsymbol{\zeta}}T^{-k}(z) - \gamma H^{\boldsymbol{A},\boldsymbol{C},\boldsymbol{\zeta}}T^{1-k}(z) ) - \mathcal{R}T^{-k}(z) \rVert^2 + \lambda \lVert W^*_{\ell} \rVert^2 \right| < \frac{\epsilon}{5},
        \label{W-W}
    \end{multline}
    and by the Ergodic Theorem
    \begin{multline}
        \left\lvert \frac{1}{\ell} \sum_{k = 0}^{\ell-1} \lVert W^\top (H^{\boldsymbol{A},\boldsymbol{C},\boldsymbol{\zeta}}T^{-k}(z) - \gamma H^{\boldsymbol{A},\boldsymbol{C},\boldsymbol{\zeta}}T^{1-k}(z)) - \mathcal{R}(z) \rVert^2 \right. \\
        \left. - \lim_{j \to \infty} \frac{1}{j} \sum_{k = 0}^{j-1} \lVert W^\top (H^{\boldsymbol{A},\boldsymbol{C},\boldsymbol{\zeta}}T^{-k}(z) - \gamma H^{\boldsymbol{A},\boldsymbol{C},\boldsymbol{\zeta}}T^{1-k}(z)) - \mathcal{R}(z) \rVert^2 \right\rvert
        < \frac{\epsilon}{5}.
        \label{l-liml<eps}
    \end{multline}
    Now the proof proceeds directly
\begin{align*}
     \mathbb{E}_{\mu}[ \lVert H_{W^*_{\ell}}^{\boldsymbol{A},\boldsymbol{C},\boldsymbol{\zeta}}(\boldsymbol{Z}) - \Phi H_{W^*_{\ell}}^{\boldsymbol{A},\boldsymbol{C},\boldsymbol{\zeta}}(\boldsymbol{Z}) \rVert^2 | \boldsymbol{A},\boldsymbol{C},\boldsymbol{\zeta} ] \nonumber \\
        &  \dawesspace = \mathbb{E}_{\mu}[ \lVert H_{W^*_{\ell}}^{\boldsymbol{A},\boldsymbol{C},\boldsymbol{\zeta}}(\boldsymbol{Z}) - \gamma H_{W^*_{\ell}}^{\boldsymbol{A},\boldsymbol{C},\boldsymbol{\zeta}}T(\boldsymbol{Z}) - \mathcal{R}(\boldsymbol{Z}) \rVert^2 | \boldsymbol{A},\boldsymbol{C},\boldsymbol{\zeta} ] \\
        & \dawesspace = \mathbb{E}_{\mu}[\lVert W^{*\top}_{\ell} ( H^{\boldsymbol{A},\boldsymbol{C},\boldsymbol{\zeta}}(\boldsymbol{Z}) - \gamma H^{\boldsymbol{A},\boldsymbol{C},\boldsymbol{\zeta}}T(\boldsymbol{Z}) ) - \mathcal{R}(\boldsymbol{Z}) \rVert^2 | \boldsymbol{A},\boldsymbol{C},\boldsymbol{\zeta}].
        \end{align*}
Then we apply~\eqref{EW_l-EW_inf<eps} which yields
\begin{align*}
    \mathbb{E}_{\mu}[ \lVert H_{W^*_{\ell}}^{\boldsymbol{A},\boldsymbol{C},\boldsymbol{\zeta}}(\boldsymbol{Z}) - \Phi H_{W^*_{\ell}}^{\boldsymbol{A},\boldsymbol{C},\boldsymbol{\zeta}}(\boldsymbol{Z}) \rVert^2 | \boldsymbol{A},\boldsymbol{C},\boldsymbol{\zeta} ] \\
    & \dawesspace < \mathbb{E}_{\mu}[\lVert W^{*\top}_{\infty} ( H^{\boldsymbol{A},\boldsymbol{C},\boldsymbol{\zeta}}(\boldsymbol{Z}) - \gamma H^{\boldsymbol{A},\boldsymbol{C},\boldsymbol{\zeta}}T(\boldsymbol{Z}) ) - \mathcal{R}(\boldsymbol{Z}) \rVert^2 | \boldsymbol{A},\boldsymbol{C},\boldsymbol{\zeta}] + \frac{\epsilon}{5}.
\end{align*}
Then we apply the Ergodic Theorem
\begin{align*}
    \mathbb{E}_{\mu}[\lVert W^{*\top}_{\infty} ( H^{\boldsymbol{A},\boldsymbol{C},\boldsymbol{\zeta}}(\boldsymbol{Z}) - \gamma H^{\boldsymbol{A},\boldsymbol{C},\boldsymbol{\zeta}}T(\boldsymbol{Z}) ) - \mathcal{R}(\boldsymbol{Z}) \rVert^2 | \boldsymbol{A},\boldsymbol{C},\boldsymbol{\zeta}] + \frac{\epsilon}{5} \nonumber \\
    & \dawesspace\dawesspace = \lim_{j \to \infty} \bigg( \frac{1}{j} \sum_{k = 0}^{j-1} \lVert W^{*\top}_{\infty}(H^{\boldsymbol{A},\boldsymbol{C},\boldsymbol{\zeta}}T^{-k}(z) - \gamma H^{\boldsymbol{A},\boldsymbol{C},\boldsymbol{\zeta}}T^{1-k}(z) ) - \mathcal{R}T^{-k}(z) \rVert^2 \bigg) + \frac{\epsilon}{5} \\
    & \dawesspace\dawesspace \leq \lim_{j \to \infty} \bigg( \frac{1}{j} \sum_{k = 0}^{j-1} \lVert W^{*\top}_{\infty}(H^{\boldsymbol{A},\boldsymbol{C},\boldsymbol{\zeta}}T^{-k}(z) - \gamma H^{\boldsymbol{A},\boldsymbol{C},\boldsymbol{\zeta}}T^{1-k}(z) ) - \mathcal{R}T^{-k}(z) \rVert^2 \bigg) + \lambda \lVert W^*_{\infty} \rVert^2 + \frac{\epsilon}{5} \\
    & \dawesspace\dawesspace = \lim_{j \to \infty} \bigg( \frac{1}{j} \sum_{k = 0}^{j-1} \lVert W^{*\top}_{j}(H^{\boldsymbol{A},\boldsymbol{C},\boldsymbol{\zeta}}T^{-k}(z) - \gamma H^{\boldsymbol{A},\boldsymbol{C},\boldsymbol{\zeta}}T^{1-k}(z) ) - \mathcal{R}T^{-k}(z) \rVert^2 + \lambda \lVert W^*_{j} \rVert^2 \bigg) + \frac{\epsilon}{5} \\
    &\text{ then apply \eqref{W-W}} \\
    & \dawesspace\dawesspace < \frac{1}{\ell} \sum_{k = 0}^{\ell-1} \lVert W^{*\top}_{\ell}(H^{\boldsymbol{A},\boldsymbol{C},\boldsymbol{\zeta}}T^{-k}(z) - \gamma H^{\boldsymbol{A},\boldsymbol{C},\boldsymbol{\zeta}}T^{1-k}(z) ) - \mathcal{R}T^{-k}(z) \rVert^2 + \lambda \lVert W^*_{\ell} \rVert^2 + \frac{2\epsilon}{5} \\
    & \dawesspace\dawesspace \leq \frac{1}{\ell} \sum_{k = 0}^{\ell-1} \lVert W^{\top}(H^{\boldsymbol{A},\boldsymbol{C},\boldsymbol{\zeta}}T^{-k}(z) - \gamma H^{\boldsymbol{A},\boldsymbol{C},\boldsymbol{\zeta}}T^{1-k}(z) ) - \mathcal{R}T^{-k}(z) \rVert^2 + \lambda \lVert W \rVert^2 + \frac{2\epsilon}{5} \\
    &\text{ then apply \eqref{l-liml<eps}} \\
    & \dawesspace\dawesspace < \lim_{j \to \infty} \bigg( \frac{1}{j} \sum_{k = 0}^{j-1} \lVert W^{\top}(H^{\boldsymbol{A},\boldsymbol{C},\boldsymbol{\zeta}}T^{-k}(z) - \gamma H^{\boldsymbol{A},\boldsymbol{C},\boldsymbol{\zeta}}T^{1-k}(z) ) - \mathcal{R}T^{-k}(z) \rVert^2 \bigg) + \lambda \lVert W \rVert^2 + \frac{3\epsilon}{5} \\
    &\text{ then apply \eqref{W_lambda<eps}} \\
    & \dawesspace\dawesspace < \lim_{j \to \infty} \bigg( \frac{1}{j} \sum_{k = 0}^{j-1} \lVert W^{\top}(H^{\boldsymbol{A},\boldsymbol{C},\boldsymbol{\zeta}}T^{-k}(z) - \gamma H^{\boldsymbol{A},\boldsymbol{C},\boldsymbol{\zeta}}T^{1-k}(z) ) - \mathcal{R}T^{-k}(z) \rVert^2 \bigg) + \frac{4\epsilon}{5} \\
    & \dawesspace\dawesspace \text{ Then apply the Ergodic Theorem again} \\
    & \dawesspace\dawesspace = \mathbb{E}_{\mu}[ \lVert W^{\top}(H^{\boldsymbol{A},\boldsymbol{C},\boldsymbol{\zeta}}(\boldsymbol{Z}) - \gamma H^{\boldsymbol{A},\boldsymbol{C},\boldsymbol{\zeta}}T(\boldsymbol{Z}) - \mathcal{R}(\boldsymbol{Z})) \rVert^2 | \boldsymbol{A},\boldsymbol{C},\boldsymbol{\zeta} ] + \frac{4\epsilon}{5} \\
    & \dawesspace\dawesspace = \mathbb{E}_{\mu}[ \lVert H^{\boldsymbol{A},\boldsymbol{C},\boldsymbol{\zeta}}_{W} - \Phi H^{\boldsymbol{A},\boldsymbol{C},\boldsymbol{\zeta}}_{W} \rVert^2 | \boldsymbol{A},\boldsymbol{C},\boldsymbol{\zeta} ]  + \frac{4\epsilon}{5} \\ 
    & \dawesspace\dawesspace \text{ then apply \eqref{H-PhiH}} \\
    & \dawesspace\dawesspace < \epsilon
    \end{align*}
\end{proof}

\subsection{Connection to Partially Observed Markov Decision Processes}

Theorem \ref{offline_q_learning} applies to a reinforcement learning scenario where the observations are a stationary and ergodic process. This includes the case where observations emerge from a partially observed, stationary and ergodic Markov decision process. These are themselves a special case of a partially observed Markov decision process (POMDP) which are a common scenario studied in the reinforcement learning community. In particular, the results in this paper apply to POMDPs in the special case that the underlying Markov process is stationary and ergodic. However, there exist stationary ergodic processes, which satisfy the conditions of Theorem \ref{offline_q_learning}, which are not the output of any partially observed decision Markov process.

The approach that we set out in this paper has a lot in common with POMDPs, but there are some subtle differences which we will clarify here. First of all, the value function in this paper is defined in terms of the complete sequence of (reward, action, observation) triples, rather than the current belief state. One advantage of our approach is that a belief state does not need to be computed explicitly, nor do any assumptions need to made about the relationship between the hidden state of the environment and the observations. In the setting of this paper, the reservoir states $x_k$ (which are explicitly computed by evaluating $H^{\boldsymbol{A},\boldsymbol{C},\boldsymbol{\zeta}}T^k(z)$ can be interpreted as latent states, very much like the latent states for POMDPs. We also stress that the value function $V$ and reservoir functionals $H^{\boldsymbol{A},\boldsymbol{C},\boldsymbol{\zeta}}$ and $H_{W^*_{\ell}}^{\boldsymbol{A},\boldsymbol{C},\boldsymbol{\zeta}}$ are causal and time invariant (CTI) so we are never using future information that is unavailable in the present, despite the input sequences being bi-infinite. Indeed, one of the strengths of our approach is that the learning procedure will be able to learn the impact of any unobserved or hidden states via the latent states $x_k$ and linear regression.

\subsection{Training ESNs with online learning}
\label{online_learning}

In some reinforcement learning applications, it is useful - or even essential - for the optimisation of $W$ to occur dynamically as new data comes in; such algorithms are called \emph{online} learning algorithms. In this section, we will present and discuss some preliminary novel results surrounding online learning algorithms that use ESNs. 
We will first introduce a lemma, stating that, under reasonable conditions, the ODE
    \begin{align}
        \frac{d}{dt}W = -h(W) := -\mathbb{E}_{\mu}\bigg[H^{\boldsymbol{A},\boldsymbol{C},\boldsymbol{\zeta}}(\boldsymbol{Z})\big( H^{\boldsymbol{A},\boldsymbol{C},\boldsymbol{\zeta}}_{W}(\boldsymbol{Z}) - \Phi H^{\boldsymbol{A},\boldsymbol{C},\boldsymbol{\zeta}}_{W}(\boldsymbol{Z}) \big)   \bigg]
        \label{associated_ODE}
    \end{align}
converges exponentially quickly to a globally asymptotic fixed point $W^*$, for which the associated ESN functional $H_{W^*}^{\boldsymbol{A},\boldsymbol{C},\boldsymbol{\zeta}}$ is \emph{close} to the unique fixed point of $\Phi$. By \emph{close} we mean that the orthogonal projection of $\Phi H_{W^*}^{\boldsymbol{A},\boldsymbol{C},\boldsymbol{\zeta}}$ onto the finite dimensional vector space of functionals $\{ H_{W}^{\boldsymbol{A},\boldsymbol{C},\boldsymbol{\zeta}} \ | \ W \in \mathbb{R}^d \}$ is $H_{W^*}^{\boldsymbol{A},\boldsymbol{C},\boldsymbol{\zeta}}$. Unlike the previous result (Theorem \ref{offline_q_learning}) we do not need to assume that the contraction mapping satisfies $\Phi(H) = R + \gamma \mathbb{E}[HT_{\boldsymbol{Z}}]$. We could choose for example $\Phi(H) = R + \gamma \sup_{\pi} \mathbb{E}[HT_{\boldsymbol{Z}(\pi)}]$ where $\boldsymbol{Z}(\pi)$ is a process under a control $\pi$. The fixed point of this operator is the \emph{optimal} value function $V^*$.

\begin{lemma}
    Let $\boldsymbol{Z}$ be an admissible input process. Let $\boldsymbol{A},\boldsymbol{C},\boldsymbol{\zeta}$ be a $n \times n$, $n \times d$, and $n \times 1$ dimensional random reservoir matrix, input matrix and bias vector. Let $H^{\boldsymbol{A},\boldsymbol{C},\boldsymbol{\zeta}}$ and $H_{W}^{\boldsymbol{A},\boldsymbol{C},\boldsymbol{\zeta}}$ denote the associated ESN functionals. Let $\Phi$ be a contraction mapping, with Lipschitz constant $0 \leq \tau < 1$, on the space of CTI filters $H : (D_N)^{\mathbb{Z}} \to \mathbb{R}$ that are $\mu$-measurable and satisfy $\mathbb{E}[H(\boldsymbol{Z})^2] < \infty$. Suppose further that $0 \leq \tau < \kappa^{-1} $ where $\kappa$ is the condition number of the autocorrelation matrix
    \begin{align*}
        \Sigma = \mathbb{E}_{\mu}\left[ \left. H^{\boldsymbol{A},\boldsymbol{C},\boldsymbol{\zeta}}(\boldsymbol{Z}) H^{\boldsymbol{A},\boldsymbol{C},\boldsymbol{\zeta} \top}(\boldsymbol{Z}) \ \right| \ \boldsymbol{A},\boldsymbol{C},\boldsymbol{\zeta} \right].
    \end{align*}
    Then there exists a $\delta > 0$ such that the ODE 
    \begin{align*}
        \frac{d}{dt}W = -h(W) := -\mathbb{E}_{\mu}\bigg[H^{\boldsymbol{A},\boldsymbol{C},\boldsymbol{\zeta}}(\boldsymbol{Z})\big( H^{\boldsymbol{A},\boldsymbol{C},\boldsymbol{\zeta}}_{W}(\boldsymbol{Z}) - \Phi H^{\boldsymbol{A},\boldsymbol{C},\boldsymbol{\zeta}}_{W}(\boldsymbol{Z}) \big) \ \bigg| \ \boldsymbol{A},\boldsymbol{C},\boldsymbol{\zeta}   \bigg]
    \end{align*}
    satisfies 
    \begin{align}
        \frac{d}{dt} \lVert W - W^* \rVert \leq - \delta \lVert W - W^* \rVert
        \label{exp_conv_cond}
    \end{align}
    where $W^*$ is a globally asymptotic fixed point. $W^*$ enjoys the further property that
    \begin{align*}
        H_{W^*}^{\boldsymbol{A},\boldsymbol{C},\boldsymbol{\zeta}} = \mathcal{P}\Phi H_{W^*}^{\boldsymbol{A},\boldsymbol{C},\boldsymbol{\zeta}}
    \end{align*}
    where $\mathcal{P}$ denotes the $L^2(\mu)$ orthogonal projection operator on the $\mu$-measurable filters $H$ satisfying $\mathbb{E}[H(\boldsymbol{Z})^2] < \infty$ and is defined
    \begin{align*}
        \mathcal{P}H(z) := H^{\boldsymbol{A},\boldsymbol{C},\boldsymbol{\zeta} \top}(z) \Sigma^{-1} \mathbb{E}_{\mu}\left[ \left. H^{\boldsymbol{A},\boldsymbol{C},\boldsymbol{\zeta}}(\boldsymbol{Z}) H(\boldsymbol{Z})\ \right| \ \boldsymbol{A},\boldsymbol{C},\boldsymbol{\zeta} \right].
    \end{align*}
\end{lemma}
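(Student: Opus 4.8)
Throughout, suppress the superscript $\boldsymbol{A},\boldsymbol{C},\boldsymbol{\zeta}$, write $H:=H^{\boldsymbol{A},\boldsymbol{C},\boldsymbol{\zeta}}$ and $H_W:=H^{\boldsymbol{A},\boldsymbol{C},\boldsymbol{\zeta}}_W=W^\top H$, and take all expectations conditioned on $\boldsymbol{A},\boldsymbol{C},\boldsymbol{\zeta}$. The plan is to read the ODE as a preconditioned fixed-point iteration and to identify $W^*$ as the (unique) vector whose readout functional is fixed by $\mathcal{P}\Phi$. First I would set up the geometry. Work in the Hilbert space $L^2(\mu)$ of square-integrable functionals with inner product $\langle f,g\rangle=\mathbb{E}_\mu[f(\boldsymbol{Z})g(\boldsymbol{Z})]$, and let $\mathcal{V}=\operatorname{span}\{H_1,\dots,H_n\}$ be the (finite-dimensional, hence closed) subspace spanned by the coordinate functionals of $H$. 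Admissibility of $\boldsymbol{Z}$ makes the reservoir functional bounded, so $\Sigma=\mathbb{E}_\mu[H(\boldsymbol{Z})H(\boldsymbol{Z})^\top]$ is finite; it is the Gram matrix of the $H_i$, and referring to its condition number $\kappa$ and to $\Sigma^{-1}$ presupposes it is positive definite, which I take as given. The map $W\mapsto H_W$ is then a linear isomorphism $\mathbb{R}^n\to\mathcal{V}$ carrying the $\Sigma$-weighted inner product $(W_1,W_2)\mapsto W_1^\top\Sigma W_2$ onto $\langle\cdot,\cdot\rangle$, since $\langle H_{W_1},H_{W_2}\rangle=W_1^\top\Sigma W_2$. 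A one-line computation ($\langle H_j,g-\mathcal{P}g\rangle=(\mathbb{E}_\mu[Hg])_j-(\Sigma\Sigma^{-1}\mathbb{E}_\mu[Hg])_j=0$) shows the operator $\mathcal{P}$ in the statement is exactly the orthogonal projection of $L^2(\mu)$ onto $\mathcal{V}$, so $\lVert\mathcal{P}\rVert\le 1$.

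Next I would rewrite the vector field. Since $H_W=W^\top H$, one has $h(W)=\Sigma W-\mathbb{E}_\mu[H\,\Phi H_W]=\Sigma\big(W-\Psi(W)\big)$, where $\Psi(W):=\Sigma^{-1}\mathbb{E}_\mu[H\,\Phi H_W]$ is precisely the vector with $H_{\Psi(W)}=\mathcal{P}\Phi H_W$. Because $\Phi$ is $\tau$-Lipschitz on $L^2(\mu)$ and $\mathcal{P}$ is a contraction, the map $H_W\mapsto\mathcal{P}\Phi H_W$ is a $\tau$-contraction of the complete space $\mathcal{V}$; transported through the isometry this says $\Psi$ is a $\tau$-contraction in $\lVert\cdot\rVert_\Sigma$. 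Banach's fixed point theorem gives a unique $W^*$ with $\Psi(W^*)=W^*$, equivalently (as $\Sigma$ is invertible) a unique zero $W^*$ of $h$, i.e. a unique equilibrium of the ODE. Unwinding $H_{W^*}=\mathcal{P}\Phi H_{W^*}$ gives $W^*=\Sigma^{-1}\mathbb{E}_\mu[H\,\Phi H_{W^*}]$ and hence $H_{W^*}(z)=H(z)^\top\Sigma^{-1}\mathbb{E}_\mu[H\,\Phi H_{W^*}]=\mathcal{P}\Phi H_{W^*}(z)$, which is the second assertion of the lemma.

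For the convergence estimate I would differentiate the Euclidean Lyapunov function $\tfrac12\lVert W-W^*\rVert^2$ along trajectories. Using $h(W^*)=0$ and $h(W)-h(W^*)=\Sigma(W-W^*)-\mathbb{E}_\mu[H(\Phi H_W-\Phi H_{W^*})]$,
\[
\frac{d}{dt}\tfrac12\lVert W-W^*\rVert^2=-(W-W^*)^\top\big(h(W)-h(W^*)\big)=-(W-W^*)^\top\Sigma(W-W^*)+\big\langle H_{W-W^*},\,\Phi H_W-\Phi H_{W^*}\big\rangle,
\]
where I used $(W-W^*)^\top H=H_{W-W^*}$ and $H_W-H_{W^*}=H_{W-W^*}$. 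The quadratic term is $\ge\lambda_{\min}(\Sigma)\lVert W-W^*\rVert^2$; by Cauchy--Schwarz and the $\tau$-Lipschitz property of $\Phi$ the inner-product term is $\le\tau\lVert H_{W-W^*}\rVert^2=\tau(W-W^*)^\top\Sigma(W-W^*)\le\tau\lambda_{\max}(\Sigma)\lVert W-W^*\rVert^2$. Hence
\[
\frac{d}{dt}\tfrac12\lVert W-W^*\rVert^2\le-\big(\lambda_{\min}(\Sigma)-\tau\lambda_{\max}(\Sigma)\big)\lVert W-W^*\rVert^2=-\lambda_{\min}(\Sigma)(1-\tau\kappa)\lVert W-W^*\rVert^2,
\]
which is strictly negative exactly because $\tau<\kappa^{-1}$. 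Setting $\delta:=\lambda_{\min}(\Sigma)(1-\tau\kappa)>0$ and dividing by $\lVert W-W^*\rVert$ yields \eqref{exp_conv_cond}, and Gr\"onwall upgrades this to $\lVert W(t)-W^*\rVert\le e^{-\delta t}\lVert W(0)-W^*\rVert$, so $W^*$ is globally asymptotically (indeed exponentially) stable.

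I expect the main obstacle to be bookkeeping the two competing geometries: $\Phi$ contracts in the $L^2(\mu)$ metric, which corresponds to the $\Sigma$-weighted metric on $\mathbb{R}^n$, whereas the drift $-h$ of the ODE is aligned with $W-W^*$ in the Euclidean metric. The hypothesis $\tau<\kappa^{-1}$ is precisely the cost of converting between these two metrics --- it is what lets the contraction constant beat the distortion factor $\lambda_{\max}(\Sigma)/\lambda_{\min}(\Sigma)$. Secondary care is needed to check that $\Psi$ and $\Sigma$ are well defined (boundedness of the reservoir functional under an admissible $\boldsymbol{Z}$, and $\Phi$ preserving $L^2(\mu)$) and that $h$ is globally Lipschitz on $\mathbb{R}^n$ (composition of the linear map $W\mapsto H_W$, the $\tau$-Lipschitz $\Phi$, and the bounded linear map $g\mapsto\mathbb{E}_\mu[Hg]$), so that the flow exists for all time and the Lyapunov argument is legitimate.
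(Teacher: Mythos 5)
Your proposal is correct and follows essentially the same route as the paper: the same rewriting $h(W)=\Sigma W-\mathbb{E}_\mu[H\,\Phi H_W]$, the same monotonicity estimate splitting off the $\Sigma$-quadratic form and bounding the cross term by $\tau\,(W-W^*)^\top\Sigma(W-W^*)$ to obtain $\delta=\lambda_{\min}(\Sigma)-\tau\lambda_{\max}(\Sigma)=\lambda_{\min}(\Sigma)(1-\tau\kappa)$, and the same derivation of $H_{W^*}=\mathcal{P}\Phi H_{W^*}$ from $h(W^*)=0$. Your one genuine addition is the Banach fixed-point argument for $\Psi(W)=\Sigma^{-1}\mathbb{E}_\mu[H\,\Phi H_W]$ in the $\Sigma$-weighted norm, which establishes existence and uniqueness of the equilibrium $W^*$ (and makes the Cauchy--Schwarz step behind the cross-term bound explicit), points the paper's proof leaves implicit.
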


\begin{proof}
    To show that $W^*$ is a globally asymptotic fixed point it suffices to show that there exists a $\delta > 0$ such that
     \begin{align*}
         (W - W^*) \cdot (h(W^*) - h(W)) \leq - \delta \lVert (W - W^*) \rVert ^2
     \end{align*}
     as this implies
     \begin{align*}
         \frac{d}{dt} \lVert W - W^* \rVert \leq - \delta \lVert W - W^* \rVert.
     \end{align*}
    To construct this $\delta$, we first note that
    \begin{align*}
        h(W) = \Sigma W - \mathbb{E}_{\mu} \bigg[ H^{\boldsymbol{A},\boldsymbol{C},\boldsymbol{\zeta}}(\boldsymbol{Z}) \Phi H^{\boldsymbol{A},\boldsymbol{C},\boldsymbol{\zeta}}_{W}(\boldsymbol{Z}) \big) \ \bigg| \ \boldsymbol{A},\boldsymbol{C},\boldsymbol{\zeta}  \bigg]
    \end{align*}
    so, by a direct computation we have
    \begin{align*}
         (W - W^*)  \cdot  (h(W^*) - h(W))  \\
         & \dawesspace = (W - W^*) \cdot \bigg( \mathbb{E}_{\mu} \bigg[ H^{\boldsymbol{A},\boldsymbol{C},\boldsymbol{\zeta}}(\boldsymbol{Z}) \Phi H^{\boldsymbol{A},\boldsymbol{C},\boldsymbol{\zeta}}_{W}(\boldsymbol{Z}) \big) \ \bigg| \ \boldsymbol{A},\boldsymbol{C},\boldsymbol{\zeta} \bigg] - \mathbb{E}_{\mu} \bigg[ H^{\boldsymbol{A},\boldsymbol{C},\boldsymbol{\zeta}}(\boldsymbol{Z}) \Phi H^{\boldsymbol{A},\boldsymbol{C},\boldsymbol{\zeta}}_{W^*}(\boldsymbol{Z}) \big) \ \bigg| \ \boldsymbol{A},\boldsymbol{C},\boldsymbol{\zeta} \bigg] \bigg) \\
        & \dawesspace - (W - W^*) \cdot \bigg( \Sigma W - \Sigma W^* \bigg) \\
        & \dawesspace =  (W - W^*) \cdot \bigg( \mathbb{E}_{\mu} \bigg[ H^{\boldsymbol{A},\boldsymbol{C},\boldsymbol{\zeta}}(\boldsymbol{Z}) \Phi H^{\boldsymbol{A},\boldsymbol{C},\boldsymbol{\zeta}}_{W}(\boldsymbol{Z}) \big) \ \bigg| \ \boldsymbol{A},\boldsymbol{C},\boldsymbol{\zeta} \bigg] - \mathbb{E}_{\mu} \bigg[ H^{\boldsymbol{A},\boldsymbol{C},\boldsymbol{\zeta}}(\boldsymbol{Z}) \Phi H^{\boldsymbol{A},\boldsymbol{C},\boldsymbol{\zeta}}_{W^*}(\boldsymbol{Z}) \big) \ \bigg| \ \boldsymbol{A},\boldsymbol{C},\boldsymbol{\zeta} \bigg] \bigg) \\
        & \dawesspace - (W - W^*)^{\top} \Sigma \big( W - W^* \big) \\
        & \dawesspace \leq
        (W - W^*) \cdot \bigg( \mathbb{E}_{\mu} \bigg[ H^{\boldsymbol{A},\boldsymbol{C},\boldsymbol{\zeta}}(\boldsymbol{Z}) \Phi H^{\boldsymbol{A},\boldsymbol{C},\boldsymbol{\zeta}}_{W}(\boldsymbol{Z}) \big) \ \bigg| \ \boldsymbol{A},\boldsymbol{C},\boldsymbol{\zeta} \bigg] - \mathbb{E}_{\mu} \bigg[ H^{\boldsymbol{A},\boldsymbol{C},\boldsymbol{\zeta}}(\boldsymbol{Z}) \Phi H^{\boldsymbol{A},\boldsymbol{C},\boldsymbol{\zeta}}_{W^*}(\boldsymbol{Z}) \big) \ \bigg| \ \boldsymbol{A},\boldsymbol{C},\boldsymbol{\zeta} \bigg] \bigg) \\
        & \dawesspace - \sigma \lVert W - W^* \rVert^2 \ \text{ where $\sigma$ is the smallest eigenvalue of $\Sigma$} \\
        &\dawesspace = 
        (W - W^*) \cdot \bigg( \mathbb{E}_{\mu} \bigg[ H^{\boldsymbol{A},\boldsymbol{C},\boldsymbol{\zeta}}(\boldsymbol{Z}) \Phi H^{\boldsymbol{A},\boldsymbol{C},\boldsymbol{\zeta}}_{W}(\boldsymbol{Z}) \big) - H^{\boldsymbol{A},\boldsymbol{C},\boldsymbol{\zeta}}(\boldsymbol{Z}) \Phi H^{\boldsymbol{A},\boldsymbol{C},\boldsymbol{\zeta}}_{W^*}(\boldsymbol{Z}) \big) \ \bigg| \ \boldsymbol{A},\boldsymbol{C},\boldsymbol{\zeta} \bigg] \bigg) \\
        & \dawesspace - \sigma \lVert W - W^* \rVert^2 \\
        & \dawesspace \leq 
        (W - W^*) \cdot \bigg( \mathbb{E}_{\mu} \bigg[ H^{\boldsymbol{A},\boldsymbol{C},\boldsymbol{\zeta}}(\boldsymbol{Z}) H^{\boldsymbol{A},\boldsymbol{C},\boldsymbol{\zeta}}_{W}(\boldsymbol{Z}) \big) - H^{\boldsymbol{A},\boldsymbol{C},\boldsymbol{\zeta}}(\boldsymbol{Z}) H^{\boldsymbol{A},\boldsymbol{C},\boldsymbol{\zeta}}_{W^*}(\boldsymbol{Z}) \big) \ \bigg| \ \boldsymbol{A},\boldsymbol{C},\boldsymbol{\zeta} \bigg] \bigg) \tau \\
        & \dawesspace - \sigma \lVert W - W^* \rVert^2 \ \text{ because $\tau$ is the Lipschitz constant for $\Phi$} \\
        & \dawesspace = \tau (W - W^*)^{\top} \Sigma (W - W^*) - \sigma \lVert W - W^* \rVert^2 \\
        & \dawesspace \leq \tau \rho \lVert W - W^* \rVert^2 - \sigma \lVert W - W^* \rVert^2 \ \text{ where $\rho$ is the largest eigenvalue of $\Sigma$} \\
        & \dawesspace = -(\sigma - \tau \rho) \lVert W - W^* \rVert^2,
    \end{align*}
    so we can set $\delta := \sigma - \tau \rho$ and notice $\delta > 0$ because $0 \leq \tau < \kappa^{-1} = \sigma / \rho$. Next, to show that
    \begin{align*}
        H_{W^*}^{\boldsymbol{A},\boldsymbol{C},\boldsymbol{\zeta}} = \mathcal{P}\Phi H_{W^*}^{\boldsymbol{A},\boldsymbol{C},\boldsymbol{\zeta}}
    \end{align*}
    we observe that since $W^*$ is an equilibrium point of the ODE 
    \begin{align*}
        \dot{W} = -h(W)
    \end{align*} 
    it follows that $h(W^*)=0$ and therefore
    \begin{align*}
    0 &= \mathbb{E}_{\mu}\bigg[H^{\boldsymbol{A},\boldsymbol{C},\boldsymbol{\zeta}}(\boldsymbol{Z})\big( H^{\boldsymbol{A},\boldsymbol{C},\boldsymbol{\zeta}}_{W^*}(\boldsymbol{Z}) - \Phi H^{\boldsymbol{A},\boldsymbol{C},\boldsymbol{\zeta}}_{W^*}(\boldsymbol{Z}) \big) \ \bigg| \ \boldsymbol{A},\boldsymbol{C},\boldsymbol{\zeta} \bigg] \\
    \implies 0 &= \mathbb{E}_{\mu}\bigg[H^{\boldsymbol{A},\boldsymbol{C},\boldsymbol{\zeta}}(\boldsymbol{Z}) H^{\boldsymbol{A},\boldsymbol{C},\boldsymbol{\zeta} \top}(\boldsymbol{Z}) \ \bigg| \ \boldsymbol{A},\boldsymbol{C},\boldsymbol{\zeta} \bigg] W^* - \mathbb{E}_{\mu}\bigg[ H^{\boldsymbol{A},\boldsymbol{C},\boldsymbol{\zeta}}(\boldsymbol{Z}) \Phi H^{\boldsymbol{A},\boldsymbol{C},\boldsymbol{\zeta}}_{W^*}(\boldsymbol{Z}) \ \bigg| \ \boldsymbol{A},\boldsymbol{C},\boldsymbol{\zeta} \bigg] \\
    \implies 0 &= \Sigma W^* - \mathbb{E}_{\mu}\bigg[ H^{\boldsymbol{A},\boldsymbol{C},\boldsymbol{\zeta}}(\boldsymbol{Z}) \Phi H^{\boldsymbol{A},\boldsymbol{C},\boldsymbol{\zeta}}_{W^*}(\boldsymbol{Z}) \ \bigg| \ \boldsymbol{A},\boldsymbol{C},\boldsymbol{\zeta} \bigg] \\
    \text{so, } \ \Sigma W^* 
    &= \mathbb{E}_{\mu}\bigg[ H^{\boldsymbol{A},\boldsymbol{C},\boldsymbol{\zeta}}(\boldsymbol{Z}) \Phi H^{\boldsymbol{A},\boldsymbol{C},\boldsymbol{\zeta}}_{W^*}(\boldsymbol{Z}) \ \bigg| \ \boldsymbol{A},\boldsymbol{C},\boldsymbol{\zeta} \bigg] \\
          \text{so, } \ W^* 
          &= \Sigma^{-1} \mathbb{E}_{\mu}\bigg[ H^{\boldsymbol{A},\boldsymbol{C},\boldsymbol{\zeta}}(\boldsymbol{Z}) \Phi H^{\boldsymbol{A},\boldsymbol{C},\boldsymbol{\zeta}}_{W^*}(\boldsymbol{Z}) \ \bigg| \ \boldsymbol{A},\boldsymbol{C},\boldsymbol{\zeta} \bigg] \\
          \text{so, } \ H^{\boldsymbol{A},\boldsymbol{C},\boldsymbol{\zeta}}_{W^*} 
          &= H^{\boldsymbol{A},\boldsymbol{C},\boldsymbol{\zeta} \top} \Sigma^{-1} \mathbb{E}_{\mu}\bigg[ H^{\boldsymbol{A},\boldsymbol{C},\boldsymbol{\zeta}}(\boldsymbol{Z}) \Phi H^{\boldsymbol{A},\boldsymbol{C},\boldsymbol{\zeta}}_{W^*}(\boldsymbol{Z}) \ \bigg| \ \boldsymbol{A},\boldsymbol{C},\boldsymbol{\zeta} \bigg] \\
          &= \mathcal{P}\Phi(H^{\boldsymbol{A},\boldsymbol{C},\boldsymbol{\zeta}}_{W^*}). \\
     \end{align*}
\end{proof}

One rather restrictive condition of this lemma is that the Lipschitz constant $\tau$ of the contraction $\Phi$ must be less than the reciprocal condition number $\kappa^{-1}$. $\kappa$ is a measure of \emph{how orthonormal} the columns of the autocorrelation matrix $\Sigma$ are. In particular, if the columns are indeed orthonormal, then $\kappa = 1$ and this condition ceases to be restrictive at all. If the columns are close to being linearly dependant, then $\kappa$ is large so the requirement that $\kappa^{-1}$ is small becomes troublesome. If indeed there is a linear dependence, the matrix $\Sigma$ is not even invertible and the theorem breaks down completely. If we interpret $H^{\boldsymbol{A},\boldsymbol{C},\boldsymbol{\zeta}}(\boldsymbol{Z})$ as a vector of features, then $\kappa$ grows with the correlation between features. Higher correlation between the features imposes a greater constraint on the Lipschitz constant $\tau$. If we have no inter-feature correlation then $\kappa = 1$ and we have no restriction at all on $\tau$.

To actually solve ODE \eqref{associated_ODE} we may need to compute
\begin{align}
    h(W) := \mathbb{E}_{\mu}\bigg[H^{\boldsymbol{A},\boldsymbol{C},\boldsymbol{\zeta}}(\boldsymbol{Z})\big( H^{\boldsymbol{A},\boldsymbol{C},\boldsymbol{\zeta}}_{W}(\boldsymbol{Z}) - \Phi H^{\boldsymbol{A},\boldsymbol{C},\boldsymbol{\zeta}}_{W}(\boldsymbol{Z}) \big) \ \bigg| \ \boldsymbol{A},\boldsymbol{C},\boldsymbol{\zeta}  \bigg] \label{h_definition}
\end{align}
which may, or may not, be practical. For example, if the process $\boldsymbol{Z}$ is ergodic, we can approximate \eqref{h_definition} by taking a sufficiently long time average of 
\begin{align*}
    H^{\boldsymbol{A},\boldsymbol{C},\boldsymbol{\zeta}} T^{k}(z)\big( H^{\boldsymbol{A},\boldsymbol{C},\boldsymbol{\zeta}}_{W} T^{k}(z) - \Phi H^{\boldsymbol{A},\boldsymbol{C},\boldsymbol{\zeta}}_{W} T^{k}(z) \big).
\end{align*}
Alternatively, we may approach the problem of solving \eqref{associated_ODE} by first considering the explicit Euler method (with time-steps $\alpha_k > 0$)
\begin{align*}
    W_{k+1} &= W_k - \alpha_k h(W_k) \\
            &= W_k - \alpha_k \mathbb{E}_{\mu}\bigg[H^{\boldsymbol{A},\boldsymbol{C},\boldsymbol{\zeta}}(\boldsymbol{Z})\big( H^{\boldsymbol{A},\boldsymbol{C},\boldsymbol{\zeta}}_{W_k}(\boldsymbol{Z}) - \Phi H^{\boldsymbol{A},\boldsymbol{C},\boldsymbol{\zeta}}_{W_k}(\boldsymbol{Z}) \big) \ \bigg| \ \boldsymbol{A},\boldsymbol{C},\boldsymbol{\zeta}  \bigg],
\end{align*}
    then we might (heuristically) expect the algorithm
\begin{align}
    W_{k+1} = W_k - \alpha_k H^{\boldsymbol{A},\boldsymbol{C},\boldsymbol{\zeta}} T^{k}(z)\big( H^{\boldsymbol{A},\boldsymbol{C},\boldsymbol{\zeta}}_{W_k} T^{k}(z) - \Phi H^{\boldsymbol{A},\boldsymbol{C},\boldsymbol{\zeta}}_{W_k} T^{k}(z) \big) 
    \label{stochastic_algorithm}
\end{align}
to converge to $W^*$, where $\alpha_k$ are positive definite real numbers that satisfy
\begin{align*}
    \sum_{k=1}^{\infty} \alpha_k = \infty \qquad \sum_{k=1}^{\infty} \alpha_k^2 < \infty.
\end{align*} 
We believe this heuristic could be made rigorous under mild assumptions, because algorithm \eqref{stochastic_algorithm} closely resembles the major algorithm extensively studied in \citep{Adaptive_Algorithms_and_Stochastic_Approximations} and \citep{Borkar2009}
for which similar results hold. Theorems 17 and 2.1.1. appearing in \citep{Adaptive_Algorithms_and_Stochastic_Approximations} and \citep{Borkar2009} respectively suggest that an algorithm much like \eqref{stochastic_algorithm} converges almost surely to $W^*$ if its associated ODE (reminiscent of  \eqref{associated_ODE}) satisfies condition \eqref{exp_conv_cond}, and the input process $\boldsymbol{Z}$ is strongly mixing. The conjecture that algorithm \eqref{stochastic_algorithm} converges to $W^*$ is also reminiscent of Theorem 3.1 by \cite{10.1007/978-3-540-72927-3_23}, and related results by \cite{Chen2019}. These results are closely related to Q-learning and stochastic gradient descent. We note that (sadly) finding the fixed point of the general contraction mapping $\Phi$ renders the estimation of $W$ a nonlinear problem. 

The theory yields an online reinforcement learning algorithm which we state below. We envision that the agent chooses a fixed policy $\pi$ and continues executing the policy for $\ell-1$ time steps. Under this policy, the agent makes observations $z_k$ and receives rewards $r_k$. We define $z_k(a)$ as the input to the ESN at time $k$ if the agent had instead executed action $a \in \mathcal{A}$ at time $k$.

\begin{algorithm}
\caption{Online Learning}
\label{online_algorithm}
\begin{algorithmic}[1]
\State Choose initial output layer $W_0$ and reservoir state $x_0$
\State Randomly generate $\boldsymbol{A},\boldsymbol{C},\boldsymbol{\zeta}$ according to procedure \ref{proc:procedure}
\State \textbf{for} each $k$ from $0$ to $\ell-1$
\Indent 
    \State Compute $W_{k+1} = W_k - \alpha_k x_k\big( W_k^{\top}x_k - r_k - \max_a \{ W_k^{\top} \sigma(\boldsymbol{A}x_k + \boldsymbol{C}z_k(a) + \boldsymbol{\zeta})\}\big)$
    \State Compute $x_{k+1} = \sigma(\boldsymbol{A}x_k + \boldsymbol{C}z_k + \boldsymbol{\zeta})$
\EndIndent
\end{algorithmic}
\end{algorithm}

\section{Bee World}
\label{Bee_World}

To demonstrate the theory presented in section~\ref{training_ESNs_with_least_squares}, we created a game called \emph{Bee World} and show that a simple reinforcement learning algorithm supported by an ESN can learn to play Bee World with respectable skill. The game is designed so that the theory presented previously is easy to visualise, rather than because the game is hard to master.

Bee World is set on the circle of unit circumference, which we denote by $S^1$, and represent as an interval with edges identified. At every point $y$ on the circle, there is a non-negative quantity of nectar which may be enjoyed by the bee without depletion. `Without depletion' means that the bee takes a negligible amount of nectar from the point $y$, so the bee occupying point $y$ does not cause the amount of nectar at $y$ to change. Furthermore, the nectar at every point $y$ varies with time $t$ according to the prescribed function
\begin{align}
    n(y,t) = 1 + \cos(\omega t) \sin(2 \pi y) \label{eqn:nectar}
\end{align}
(which we chose somewhat arbitrarily) that is unknown to the bee. Thus, the amount of nectar enjoyed by the bee at time $t$ is a value that lies in the interval $[0,2]$, which we will denote $\mathcal{N}$.
Time advances in discrete integer steps $t = 0,1,2,\ldots $, and at any time point $t$ a bee at point $y$ observes the quantity of nectar $r \in \mathcal{N}$ at point $y$ and nothing else. 
Having made this observation, the bee may choose to move anywhere in the interval $(y-c,y+c)$ for some fixed $0<c<1$ and arrive at its chosen destination at time $t+1$. The interval of possible moves $(-c,c)$ is called the action space and is denoted $\mathcal{A}$. The goal of the bee is to devise a policy whereby, given all its previous observations, the bee makes a decision as to where to move next, such that the discounted sum over all future nectar is as great as possible. The space of all previous (reward, action) pairs $(\mathcal{N} \times \mathcal{A})^{\mathbb{Z}_-}$ is contained by the space of bi-infinite sequences $\mathbb(\mathbb{R}^2)^{\mathbb{Z}}$. The agent playing Bee World makes no observations beyond the rewards (nectar) and actions, but we could easily envision a more general game where the agent makes observations from a set $\Omega$ and therefore makes its decisions based on a left sequence of (reward, action, observation) triples.

The policy adopted by the bee may be realised as a deterministic policy $\pi : (\mathcal{N} \times \mathcal{A})^{\mathbb{Z}} \to \mathcal{A}$ (a CTI functional) for which the bee executes an action $a \in \mathcal{A}$ determined by the history of (reward, action) pairs. Alternatively, the bee may adopt a stochastic policy, for which every state history of (reward, action) pairs admits a distribution over actions $\mathcal{A}$ from which the bee makes a random choice.

Though the evolution of Bee World is Markovian (and deterministic), the bee makes only a partial observation of the state of Bee World (i.e the amount of nectar the bee observes at time $t$) so the bee must take advantage of its \emph{memory} to reconstruct the true state and find an optimal policy. This need for memory renders the problem suitable for an ESN, while ruling out the conventional theory of Markov Decision Processes. The problem of playing Bee World can therefore be formulated as a Partially Observed Markov Decision Process.

\subsection{Approximating the value functional}

Under a policy $\pi$, the nectar-action pairs experienced by the bee yield a realisation of the $(\mathcal{N},\mathcal{A})^{\mathbb{Z}}$-valued random variable $\boldsymbol{Z}$. It therefore makes sense to define the value functional $V : (\mathcal{N} \times \mathcal{A})^{\mathbb{Z}} \to \mathbb{R}$ associated to $\boldsymbol{Z}$ by
\begin{align}
    V(z) = \mathbb{E}_{\mu}\bigg[ \sum_{k=0}^{\infty} \gamma^k \mathcal{R}T^k(\boldsymbol{Z}) \ \bigg| \ \boldsymbol{Z}_j = z_j \ \forall j \leq 0 \bigg] \label{bee_value}
\end{align}
 where $\mathcal{R} : (\mathcal{N} \times \mathcal{A})^{\mathbb{Z}} \to \mathbb{R}$ is the reward functional defined by $\mathcal{R}(z) = r_0$, where $r_0$ is the nectar collected at time $0$, $T$ is the shift operator, 
 and $\gamma \in [0,1)$ is the discount factor representing the relative importance of near and long term nectar consumption.
 We can see after a simple rearrangement of \eqref{bee_value}  that 
\begin{align*}
    V(z) = \mathcal{R}(z) + \gamma \mathbb{E}_{\mu}[VT_{\boldsymbol{Z}}(z)]
\end{align*}
so $V$ is the unique fixed point of the contraction mapping $\Phi$ defined by
\begin{align*}
    \Phi (H)(z) := \mathcal{R}(z) + \gamma \mathbb{E}_{\mu}[HT_{\boldsymbol{Z}}(z)]
\end{align*}
as discussed in Section \ref{training_ESNs_with_least_squares}. Thus, by Theorem \ref{offline_q_learning}, we can approximate the value function $V$ using an ESN trained by regularised least squares as long as the nectar-action pairs $z \in (\mathcal{N} \times \mathcal{A})^{\mathbb{Z}}$ are drawn from a suitable ergodic process $\boldsymbol{Z}$. Therefore, we chose an initial policy $\pi_0$ such that $\boldsymbol{Z}$ is ergodic. In particular, we chose a stochastic policy $\pi_0(z) \sim U(-c,c)$ for all histories of (reward, action) pairs $z \in (\mathcal{N} \times \mathcal{A})^{\mathbb{Z}}$ so that the bee takes a uniform sample from the action space $\mathcal{A} = (-c,c)$ at any point $y \in S^1$. For the purpose of playing a game, we set $c = 0.1$ and $\gamma = 0.5$. We allowed the bee to execute this policy for 2000 time steps and recorded the observed nectar at every time. The first 250 time steps are plotted in Figure \ref{fig:untrained_bee_main}.

Next, we set up an ESN of dimension $n = 300$, with reservoir matrix, input matrix, and bias $\boldsymbol{A},\boldsymbol{C},\boldsymbol{\zeta}$ populated with i.i.d uniform random variables $U(-0.05,0.05)$. $\boldsymbol{A}$ was then multiplied by a scaling factor such that the 2-norm of $\boldsymbol{A}$ satisfies $\lVert \boldsymbol{A} \rVert_2 = 1$. We choose an activation function $\sigma(x) := \max(0,x)$. We should pause here and note that ESN described here differs slightly from the ESN described in procedure \ref{proc:procedure}. We instead generated $\boldsymbol{A},\boldsymbol{C},\boldsymbol{\zeta}$ in a traditional way, which is empirically observed to be highly successful, as demonstrated in the literature, rather than the more cumbersome method described in procedure \ref{proc:procedure}. These numerical results suggest that procedure \ref{proc:procedure} can be simplified.

We then computed a sequence of reservoir states $x_k \in \mathbb{R}^{300}$ for the ESN using the iteration
\begin{align*}
    x_{k+1} = \sigma(\boldsymbol{A}x_k + \boldsymbol{C}z_k + \boldsymbol{\zeta})
\end{align*}
where $x_0 = 0$ and each $z_k \in (\mathcal{N} \times \mathcal{A})$ comprises 2 components: the first is the quantity of nectar observed by the bee at time $k$, and the second is the action $a \in (-c,c)$ executed at time $k$ under policy $\pi_0$. Now we return our attention to Theorem \ref{offline_q_learning}, and see that the $W^*_{\ell}$ minimising (over $W$)
\begin{align*}
    \frac{1}{\ell}\sum_{k = 0}^{\ell - 1} \lVert W^{\top} (H^{\boldsymbol{A},\boldsymbol{C},\boldsymbol{\zeta}}T^{-k}(z) - \gamma H^{\boldsymbol{A},\boldsymbol{C},\boldsymbol{\zeta}}T^{1-k}(z)) - \mathcal{R}(z) \rVert^2 + \lambda \lVert W \rVert^2
\end{align*}
converges to $W$ minimising
\begin{align}
    \lVert W^{\top} ( x_k - \gamma x_{k+1}) - r_k  \rVert^2 + \lambda \lVert W \rVert^2 \label{bee_world_lsq}
\end{align}
so we can immediately reformulate \eqref{bee_world_lsq} as the least squares problem
\begin{align*}
    W = (\Xi^\top \Xi + \lambda I)^{-1} \Xi^\top U  
\end{align*}
where $\Xi$ is the matrix with $k$th column
\begin{align*}
    \Xi_k := x_{k} - \gamma x_{k+1}
\end{align*}
and $U$ has $k$th entry $r_k$ the $k$th quantity of nectar, and $\lambda$ is the regularisation parameter which we set to $10^{-9}$.
We solved this linear system using the SVD. Now
\begin{align*}
    V(z) \approx H_{W^*_{\ell}}^{\boldsymbol{A},\boldsymbol{C},\boldsymbol{\zeta}}(z) \equiv (W^*_{\ell})^{\top} H^{\boldsymbol{A},\boldsymbol{C},\boldsymbol{\zeta}}(z) \equiv W^{\top}x
\end{align*}
where $x$ is the reservoir state associated to the left infinite input sequence $z$. Furthermore, the map $(W^{\top} \cdot)$ therefore approximates the unique fixed point of $\Phi$ (by Theorem \ref{offline_q_learning}) and this fixed point is exactly the value functional we are looking for. Thus, we can easily compute the approximate value of an arbitrary reservoir state $x$ under the initial policy $\pi$ by computing the inner product $W^{\top} x$. We illustrate this in Figure \ref{fig:untrained_bee_plot} by plotting, at each time $k = 1, \ldots  , 250$, the value of every observed state to accompany the observed nectar.

\begin{figure}[t!]
    \centering
    \begin{subfigure}[b]{1\textwidth}
        \includegraphics[width=\textwidth]{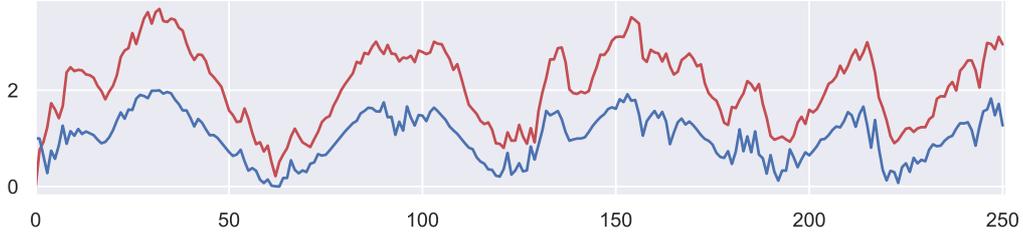}
        \caption{The nectar collected (blue) and the approximate value function under the initial policy $\pi_0$ (red) is plotted for the first 250 time steps ($x$-axis).}
        \label{fig:untrained_bee_plot}
    \end{subfigure}

    \begin{subfigure}[b]{1\textwidth}
        \includegraphics[width=1\textwidth]{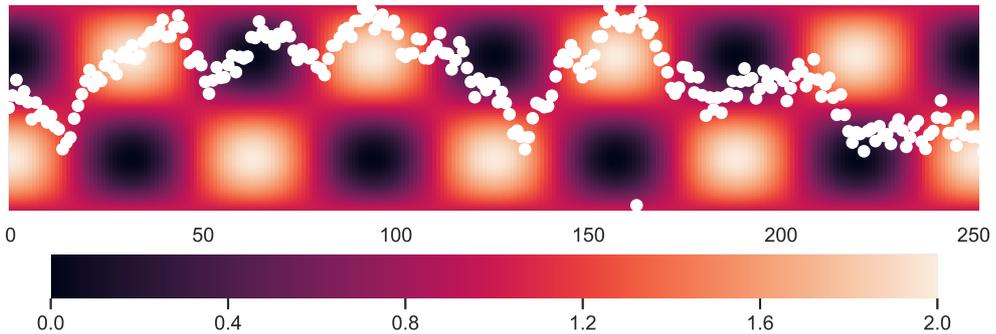}
        \caption{The nectar function $n(y,t)$~\eqref{eqn:nectar} at every point represented as a heat map in the $(t,y)$ plane, with the position of the bee at time $t$ under the initial policy indicated by the overlaid white circles.}
        \label{fig:untrained_bee}
    \end{subfigure}
\caption{Dynamics of Bee World where the bee executes the initial policy $\pi_0(z) \sim U(-0.1,0.1)$ for the first $250$ time steps.}
\label{fig:untrained_bee_main}
\end{figure}

\subsection{Updating the policy}
Having computed an approximate value function under the initial policy $\pi_0(z) \sim U(-0.1,0.1)$, we were faced with the problem of how to improve upon this policy. Exploring efficient and effective algorithms for iteratively improving a policy is a rich area of reinforcement learning research, but outside the scope of this section. Instead, we implemented a simple and greedy approach. For a given reservoir state $x$ we consider 100 actions $a_1, a_2, \ldots  a_{100}$ uniformly sampled over $\mathcal{A} = (-0.1,0.1)$, then for each action we consider the nectar-action pairs $z^{(1)} , \ldots  , z^{(100)} \in \mathcal{N} \times \mathcal{A}$ where the nectar for each pair is the current nectar; and is therefore the same in every pair. Then we compute the next reservoir states for each pair
\begin{align*}
    x^{(i)}_{k+1} = \sigma(\boldsymbol{A}x_k + \boldsymbol{C}z^{(i)}_k + \boldsymbol{\zeta})
\end{align*}
and estimate the value of executing the $i$th action by computing $W^\top x^{(i)}_{k+1}$. Then we choose to execute the action $a^*$ with the greatest estimated value - which determines our new policy $\pi_1$ - which yields a significant improvement over the initial policy $\pi_0$, as illustrated in Figure \ref{fig:trained_bee_main}. Under the initial policy $\pi_0$ the bee collected an average of approximately 1.05 nectar per unit time, in comparison to 1.52 nectar under the improved policy $\pi_1$. This is much closer to the optimal value of approximately 1.60, which we obtain in the next section. The algorithm which first approximates the value function, and then updates the policy is described in Algorithm \ref{1_step_algorithm}.

\begin{figure}[t!]
    \centering
    \begin{subfigure}[b]{1\textwidth}
        \includegraphics[width=\textwidth]{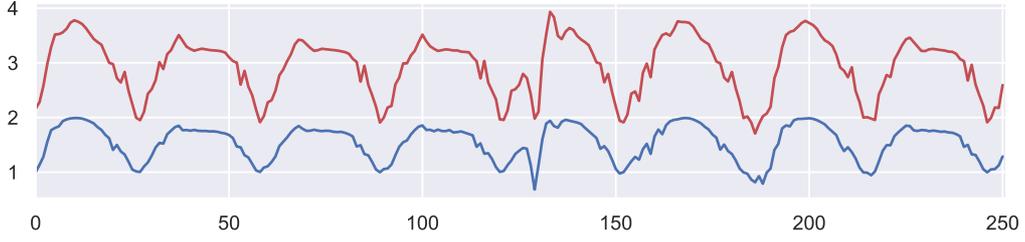}
        \caption{The nectar collected (blue) and the approximate value function (red) is plotted for the first 250 time steps (y-axis) under the improved policy $\pi_1$.}
        \label{fig:trained_bee_plot}
    \end{subfigure}

    \begin{subfigure}[b]{1\textwidth}
        \includegraphics[width=1\textwidth]{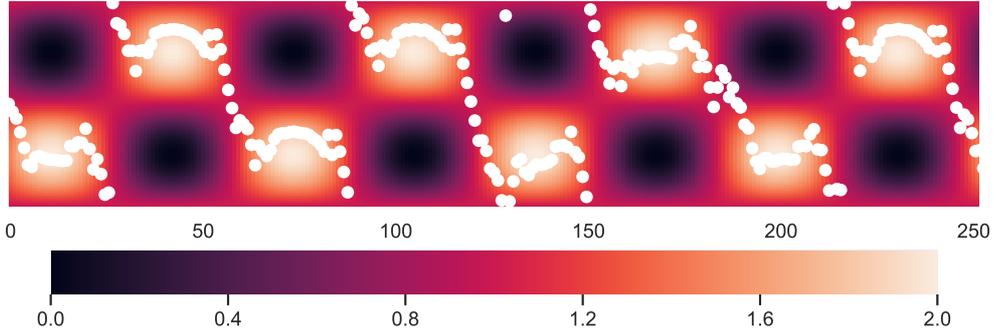}
        \caption{The nectar function $n(y,t)$~\eqref{eqn:nectar} at every point represented as a heat map in the $(t,y)$ plane, with the position of the bee at time $t$ under the improved policy is indicated by the overlaid white circles.}
        \label{fig:trained_bee}
    \end{subfigure}
\caption{Dynamics of Bee World where the bee executes the improved policy $\pi_1$ for the first $250$ time steps.}
\label{fig:trained_bee_main}
\end{figure}

\begin{algorithm}
\caption{One Step Offline Learning Algorithm (Bee World)}
\label{1_step_algorithm}
\begin{algorithmic}[1]
\State Choose initial reservoir state $x_0$
\State Randomly generate $\boldsymbol{A},\boldsymbol{C},\boldsymbol{\zeta}$
\State \textbf{for} each $k$ from $0$ to $\ell-1$
\Indent 
    \State Compute $x_{k+1} = \sigma(\boldsymbol{A}x_k + \boldsymbol{C}(r_k,a_k) + \boldsymbol{\zeta})$
\EndIndent
\State Find $W$ that minimises $\sum^{\ell-1}_{k=0} \lVert W^{\top}(x_k - \gamma x_{k+1}) - r_k \rVert^2 + \lambda \lVert W \rVert^2$ 
\State \textbf{for} each $k$ from $\ell$ to $L-1$
\Indent 
    \State Compute $a^* = \max_a \{ W^{\top}\sigma(\boldsymbol{A}x_k + \boldsymbol{C}(r_k,a) + \boldsymbol{\zeta}) \}$
    \State Compute $x_{k+1} = \sigma(\boldsymbol{A}x_k + \boldsymbol{C}(r_k,a^*) + \boldsymbol{\zeta})$
\EndIndent
\end{algorithmic}
\end{algorithm}

\subsection{An Analytic Solution for Bee World}

In this section, we will analyse Bee World so that we can compare the ESN solution to results that we can prove. To make our own lives easier, we consider a smooth version of Bee World, rather than the discrete time version solved by the ESN, so that we can formulate Bee World as a control problem that admits a solution via the Euler-Lagrange equation.
We have the control system
\begin{align*}
    \dot{\tau} &= 1 \\
    \dot{y} &= u(y,\tau)
\end{align*}
where $u$ is the controller dependant on $y$ and $\tau$. Then we have a cost function
\begin{align*}
    \mathcal{C}(x,\tau,u) = f(u) - n(y,\tau)
\end{align*}
where $f(x)$ is the penalty term for using the control $u$ and $n(y,\tau)$ is the nectar function. In the above formulation of Bee World 
\begin{align*}
    f(u) = 
    \begin{cases}
        0 &\text{ if } -c \leq u \leq c \\
        \infty &\text{ otherwise } \\
    \end{cases}
\end{align*}
where $c = 0.1$. Then the objective is to find
\begin{align*}
    u^* = \argmin_u \bigg\{ \int_{0}^{\infty} \gamma^t \mathcal{C}(y,\tau,u) \ dt \bigg\}.
\end{align*}
We can see that $f$ is not a well defined function so we will introduce the family of functions
\begin{align*}
    f_{\epsilon}(u) = -\epsilon \log(\cos(\pi u/(2c)))
\end{align*}
where $\epsilon > 0$, and notice that $f_\epsilon$ approaches $f$ pointwise as $\epsilon \to 0$. Next, we recall that the stationary points (including the minimum) of the integral functional
\begin{align*}
    \mathcal{I}[y] = \int^{\infty}_0 \mathcal{F}(t,y,\dot{y}) \ dt
\end{align*}
all satisfy the Euler-Lagrange equation
\begin{align*}
    \frac{d}{dt}\frac{\partial \mathcal{F}}{\partial \dot{y}} - \frac{\partial \mathcal{F}}{\partial y} = 0.
\end{align*}
So, we let
\begin{align*}
    \mathcal{F}(t,y,\dot{y}) &= \gamma^t \mathcal{C}(t,y,\dot{y}) \\
                             &= \gamma^t (-\epsilon \log(\cos(\pi\dot{y}/(2c))) - \cos(\omega t)\sin(2 \pi y) - 1 )
\end{align*}
then
\begin{align*}
    0 &= \frac{d}{dt}\frac{\partial \mathcal{F}}{\partial \dot{y}} - \frac{\partial \mathcal{F}}{\partial y} \nonumber \\
    &= \frac{d}{dt}\bigg( \gamma^t \frac{d}{d\dot{y}}(-\epsilon \log(\cos(\pi\dot{y}/(2c)))) \bigg) + 2\pi \gamma^t \cos(\omega t) \cos(2 \pi y) \nonumber \\
    &= \frac{\pi\epsilon}{2c} \frac{d}{dt}\bigg( \gamma^t \tan(\pi\dot{y}/(2c)) \bigg) + 2\pi \gamma^t \cos(\omega t) \cos(2 \pi y) \nonumber \\
    &= \frac{\pi\epsilon}{(2c)} \bigg( \log(\gamma) \gamma^t \tan(\pi\dot{y}/(2c)) + \gamma^t \frac{\pi\ddot{y}}{2c}\sec^2(\pi\dot{y}/(2c)) \bigg) + 2\pi \gamma^t \cos(\omega t) \cos(2 \pi y) \nonumber \\
    &= \frac{\pi\epsilon}{2c} \bigg( \log(\gamma) \tan(\pi\dot{y}/(2c)) + \frac{\pi\ddot{y}}{2c}\sec^2(\pi\dot{y}/(2c)) \bigg) + 2\pi \cos(\omega t) \cos(2 \pi y) \nonumber,
\end{align*}
which we can reformulate as a dynamical system
\begin{align}
    \dot{v} &= -\frac{2c \cos^2(\pi v/(2c))}{\pi}\bigg(\frac{4 c \cos(\omega \tau) \cos(2 \pi y)}{\epsilon} + \log(\gamma) \tan(\pi v/(2c)) \bigg) \nn \\
    \dot{y} &= v \nn \\
    \dot{\tau} &= 1 \label{optimal_bee_dynamics}
\end{align}
whose solutions are stationary points of the integral functional. For small $\epsilon$, we approach the Bee World problem. We took $\epsilon = 10^{-5}$, $\gamma = 1/2$, initial position $y = 0$, and initial velocity $v = 0$ then simulated a trajectory of the ODE using \codeword{scipy.integrate.odeint}. We plotted this in Figure \ref{fig:optimal_bee}. The average nectar collected by under this policy was approximately 1.60.

\begin{figure}[t]
    \centering
        \includegraphics[width=\textwidth]{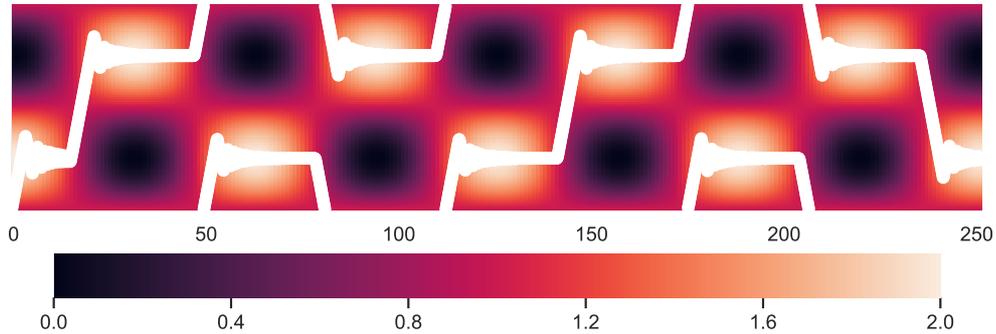}
        \caption{A numerical solution to the ODE \eqref{optimal_bee_dynamics} with $\epsilon = 10^{-5}$ (white line) superposed on the heat map of the nectar function $n(y,t)$ given in~\eqref{eqn:nectar}. Dark colours indicate regions of low nectar, light regions indicate high values of the nectar function. We observe that the solution trajectory spends much more time near local maxima of the nectar function but has complicated oscillatory fluctuations during transitions between local maxima. The oscillations are likely due to approaching a sort of singularity as $\epsilon \to 0$.}
        \label{fig:optimal_bee}
\end{figure}

\section{Application to Stochastic Control}
\label{a_stochastic_optimal_control_problem}

ESNs have shown remarkable promise in solving problems in mathematical finance - including by \cite{LIN20097313}, \cite{doi:10.1260/1748-3018.7.1.87}, and \cite{Dan2014} who used an ESN to predict the future values of stock prices.
\cite{6319485} used an ESN to learn the solution to a credit rating problem and
\cite{6924052} used an ESN to forecast exchange rates, comparing the results to forecasts made with an ARMA model. 
In this section we will introduce a stochastic optimal control problem arising in the market making problem. We will solve this problem analytically, and compare this to the solution obtained by a reinforcement learning agent supported by an ESN.

\subsection{A Market Making Problem}

We consider a stochastic control problem inspired by the motivations of a market maker acting in a general financial market.   In practice the specific role of a market maker depends on the particular market, but we consider a market maker who provides liquidity to other market participants by quoting prices at which they are willing to sell (ask) and buy (bid) an asset. By setting the ask price higher than the bid price in general they can profit from the difference when they receive both a buy and sell order at these prices. However, the market maker faces risk, since if they buy a quantity of the asset the market price might move against them before they are able to find a seller. 

The market making problem is a complex one, and has been studied extensively since the publication of the paper by \cite{doi:10.1080/14697680701381228}.  The paper of \cite{doi:10.1080/1350486X.2017.1342552} gives a good overview of much of this work. We consider a stylised version of this problem that focuses on inventory management without considering explicit optimal quoting strategies. We consider that a market maker acting relatively passively around the market price in ordinary conditions would expect to observe a random demand for buy and sell orders. If as a result of random fluctuations they find their inventory has drifted away from zero, they would set prices more competitively on either the ask or bid side to encourage trades to balance their position. Very broadly the conclusions of work on the market making problem are that there is a price to be paid to exert control over the inventory process and bring inventories closer to zero.   

Motivated by this insight, we consider the market maker's inventory to be a stochastic process $(\boldsymbol{Y}_t)_{t \geq 0}$ with dynamics
\begin{align*}
    d\boldsymbol{Y}_t=\boldsymbol{\pi}_t dt + \sigma d\boldsymbol{W}_t
\end{align*}
where $(\boldsymbol{W}_t)_{t \geq 0}$ is a standard Brownian motion.

The parameter $\sigma$ measures the volatility of the incoming order flow, and $(\boldsymbol{\pi}_t)_{t \geq 0} $ is the control process by which the market maker adds drift into their order flow by moving their bid and ask quotes. Naturally, there is a cost involved in applying the control, and a further cost to holding inventory away from zero. We introduce parameters $\alpha$ and $\beta$ to quantify these effects and model the market maker's profit as a stochastic process solving
$$d\boldsymbol{Z}_t=(r-\alpha \boldsymbol{\pi}_t^2-\beta \boldsymbol{Y}_t^2) dt$$
where $r$ is the rate of profit the market maker would achieve from the bid-ask spread if they did not have concerns about the asset price movements. We consider the case where the market maker seeks to maximise their long run discounted profit

$$v(y)=\max_{\pi} \mathbb{E}^y\Big[\int_0^\infty e^{-{\delta t}} d\boldsymbol{Z}_t \Big],$$
where $\mathbb{E}^y$ is the expectation with the process started at $Y_0=y$. 
We can show that the market maker's value function and optimal control are
\begin{align}
    v(y)= - \alpha hy^2+\frac{r- \alpha h \sigma^2}{\delta}, \qquad \pi^* (y) = - hy , \label{v_and_phi}
\end{align}
where $$h:= \frac{-\alpha \delta + \sqrt{\alpha^2 \delta^2+4 \beta}}{2 \alpha}$$

Further, the inventory process $\boldsymbol{Y}_{t \geq 0}$, when controlled by the optimal control $\pi^*(y)=-hy$ is given by the Ornstein-Uhlenbeck process
$$d\boldsymbol{Y}_t=-h\boldsymbol{Y}_t dt+ \sigma d\boldsymbol{W}_t$$
whose stationary distribution is a Gaussian $\mathcal{N}\left(0, \frac{\sigma^2}{2h}\right)$.

We observe that this is an infinite horizon, Linear-Quadratic regulator (LQR) type problem, a class of problems which have a long history in the control literature, and more recently have been systematically studied in the reinforcement learning literature. Recent work on online learning for the LQR problem (e.g. \cite{fazel_global_2018,learning_the_LQR,dean_sample_2020}) has considered a range of variants of the LQR problem, including cases with uncertainty on the both the dynamics and the reward, and where the state variable may only be partially observed. However most of these approaches work in the setting of model-based learning approaches: that is, they attempt to learn a ``model'' of the world, and therefore exploit the fact that the LQR structure is known and can be learned from the data; in comparison, \cite{fazel_global_2018} still rely on the LQR structure, but do not directly try to learn the ``model'' of the world. The paper \cite{tu_gap_2019} analyses the difference between model-based and model-free approaches to the LQR problem, showing that one should expect an exponential separation between model-based and model-free approaches.  In this context, our approach, which does not assume the LQR structure, can also be compared to model-free approaches, such as the classical work of \cite{bradtke_adaptive_1994}, which takes a $Q$-learning approach.

\subsection{Discretised problem}

To turn this into a problem into one that can be used to train an Echo State Network we reformulate it in discrete time; we consider a process $\boldsymbol{Y}_0, \boldsymbol{Y}_1,\boldsymbol{Y}_2,\ldots $ such that 
$$\boldsymbol{Y}_{k+1}-\boldsymbol{Y}_k= \epsilon \boldsymbol{\pi}_k  + \sigma\sqrt{\epsilon} \mathcal{N}_k$$
where $(\mathcal{N}_k)_{k \in \mathbb{N}}$ are a sequence of i.i.d. random variables $\mathcal{N}_k \sim \mathcal{N}(0,1)$ for each $k \in \mathbb{N}$, and $\epsilon > 0$ is the time increment.  The control is now a sequence $\pi=(\boldsymbol{\pi}_k)_{k \in \mathbb{N}}$.  The profit function satisfies $\boldsymbol{Z}_0=0$ and 
$$d\boldsymbol{Z}_k := \boldsymbol{Z}_{k+1}-\boldsymbol{Z}_k=\epsilon(r-\alpha \boldsymbol{\pi}_k^2-\beta \boldsymbol{Y}_k^2).$$
and the market maker seeks to maximise the value function

$$v(y)=\max_{\pi} \mathbb{E}^y\Big[\sum_{k=0}^\infty e^{-{\delta \epsilon k}} d\boldsymbol{Z}_k \Big],$$
over choices of the control $\pi$ where $\mathbb{E}^y$ is the expectation with the process started at $\boldsymbol{Y}_0=y$.

It can be shown that in the limit as $\epsilon \to 0$, the optimal control and value function for this problem converge precisely to the optimal control and value function in the continuous case.

We state here the results in the case $\epsilon=1$, the value we will use for the application of the Echo State Network below. Writing $\gamma=e^{-\delta}$, we find in this case that the value function and optimal control are given by
$$v(y)=-\alpha p y^2+\frac{r-\gamma \alpha p\sigma^2}{1-\gamma}, \qquad \pi^*= - p y$$
where $$p:= \frac{(\alpha (\gamma-1)+\gamma \beta) + \sqrt{(\alpha (\gamma-1)+\gamma \beta)^2 + 4 \alpha \beta \gamma}}{2\gamma \alpha}$$.


The process $\boldsymbol{Y}$ controlled by $\pi^*$ is Markovian, and has transition operator
\begin{align*}
    (\mathcal{T}s)(y) &= \int_{-\infty}^{\infty} \mathbb{P}(\boldsymbol{Y}_{k+1} = y \ | \ \boldsymbol{Y}_k = x) s(x) \, dx \\
     &= \frac{1}{\sqrt{2\pi}\sigma} \int_{-\infty}^{\infty} e^{-\frac{(y-(1-p x))^2}{2\sigma^2}}s(x) \, dx.
\end{align*}

It is straightforward to verify that the Gaussian probability density function
\begin{align}
    s^*(y) = \frac{\sqrt{p(2-p)}}{\sqrt{2\pi}\sigma}\mathrm{e}^{-\frac{y^2p(2-p)}{2\sigma^2}}, \label{invariant_measure}
\end{align}
is a fixed point of $\mathcal{T}$ and hence that the controlled process has stationary distribution $\mathcal{N}\left(0,\frac{\sigma^2}{p(2-p)}\right)$.

\subsection{Solving the Market Making Problem with an ESN}
\label{solving_the_mm_problem_with_an_ESN}


In this section, we seek to solve the the market making problem with a reinforcement learning algorithm supported by an ESN. In this set up, we assume the market maker has no knowledge of the cost function, and no knowledge of the effect of executing an action. The agent must execute a variety of actions in a variety of states to learn about the environment and the effect of its actions. Then, the market maker makes reasonable changes to its policy to arrive at a policy that reduces the long term costs of operation. The policy obtained by the reinforcement learning approach is compared to the optimal policy derived with full knowledge of the system.

\subsubsection{Approximating the value functional}
\label{approx_the_val_func}

For the purpose of running the simulation, we let the cost of operating the control $\alpha = 1$, the cost of straying from the origin $\beta = 1$, the timestep $\epsilon = 1$, and the volatility parameter $\sigma = 1$. We take the baseline profit parameter $r = 0$.
The inventory held, and action taken, by the market maker at time $k$ will be denoted $y_k$ and $a_k$ respectively. A sequence of (inventory, action) pairs will be denoted $z \in (\mathbb{R}^2)^{\mathbb{Z}}$ with $z_k = (y_k,a_k)$.
The value functional for the market maker problem is defined
\begin{align*}
    V(z) = \mathbb{E}_{\mu}\bigg[ \sum_{k=0}^{\infty} \gamma^k \mathcal{R}T^k(\boldsymbol{Z}) \ \bigg| \ \boldsymbol{Z}_j = z_j \ \forall j \leq 0 \bigg]
\end{align*}
where $\mathcal{R}: (\mathbb{R}^2)^{\mathbb{Z}} \to \mathbb{R}$ is the reward functional
\begin{align*}
    \mathcal{R}(z) = -(\alpha a_{-1}^2 + \beta y_0^2),
\end{align*}
$T$ is the shift operator, and $\gamma \in [0,1)$ is the discount factor representing the relative importance of near and long term costs.  We can see after a simple rearrangement that 
\begin{align*}
    V(z) = \mathcal{R}(z) + \gamma \mathbb{E}_{\mu}[VT_{\boldsymbol{Z}}(z)]
\end{align*}
so $V$ is the unique fixed point of the contraction mapping $\Phi$ defined by
\begin{align*}
    \Phi (H)(z) = \mathcal{R}(z) + \gamma \mathbb{E}_{\mu}[HT_{\boldsymbol{Z}}(z)]
\end{align*}
as discussed in Section \ref{training_ESNs_with_least_squares}. Thus, by Theorem \ref{offline_q_learning}, we can approximate the value function $V$ using an ESN trained by regularised least squares \emph{if} the (inventory, action) pairs $(y_k,a_k)$ are the realisation of a stationary ergodic process. 
Consequently, we sought an initial policy $\pi_0$ such that the process $\boldsymbol{Z}$ comprising the inventory-action pairs under policy $\pi_0$ is stationary and ergodic. In particular, we chose
\begin{align}
    \pi_0(y) \sim \mathcal{N}(0,\sigma_i^2) - \eta y \label{initial_policy_market_maker}
\end{align}
with $\eta = 0.05$ a constant representing the rate of exponential drift toward $0$ and $\sigma_i^2 = 1$.
We ran this policy for 10000 time steps, and recorded the pairs $z_k$ along with the rewards $r_k$.
Next, we set up an ESN of dimension $n = 300$, with reservoir matrix, input matrix, and bias $\boldsymbol{A},\boldsymbol{C},\boldsymbol{\zeta}$ populated with i.i.d uniform random variables $U(-0.05,0.05)$. $\boldsymbol{A}$ was then multiplied by a scaling factor such that the 2-norm of $\boldsymbol{A}$ satisfies $\lVert \boldsymbol{A} \rVert_2 = 1$. As in the previous example we chose $\sigma$ to be the ReLU activation function.
We then computed reservoir states 
\begin{align*}
    x_{k+1} = \sigma(\boldsymbol{A} x_k + \boldsymbol{C} z_k + \boldsymbol{\zeta})
\end{align*}
starting with an initial reservoir state $x_0 = 0$. An arbitrary reservoir state $x$ then encodes the left infinite sequence of (inventory,action) pairs $z$.
We seek an expression for the value of the reservoir state $x$ by solving the least squares problem
\begin{align*}
    W = (\Xi^\top \Xi + \lambda I)^{-1} \Xi^\top U  
\end{align*}
(using the singular value decomposition)
where $\Xi$ is the matrix with $k$th column is
\begin{align*}
    \Xi_k := x_{k} - \gamma x_{k+1}
\end{align*}
and $U$ is the vector of observations where the $k$th entry is the reward $r_k$, and $\lambda$ is the regularisation parameter which we set to \codeword{1e-6}. We also chose $\gamma = e^{-1}$. In practice, the discount factor is usually much larger.
With this, we obtain an expression for value of the reservoir state $x$ given by $W^{\top} x$. The results of this policy are shown in Figures  \ref{initial_value} and \ref{fig:untrained_market_maker_main}. The procedure which estimates the value function and improves upon the policy is described in Algorithm \ref{1_step_algorithm_mm}.

\begin{figure}
    \centering
        \includegraphics[width=0.75\textwidth]{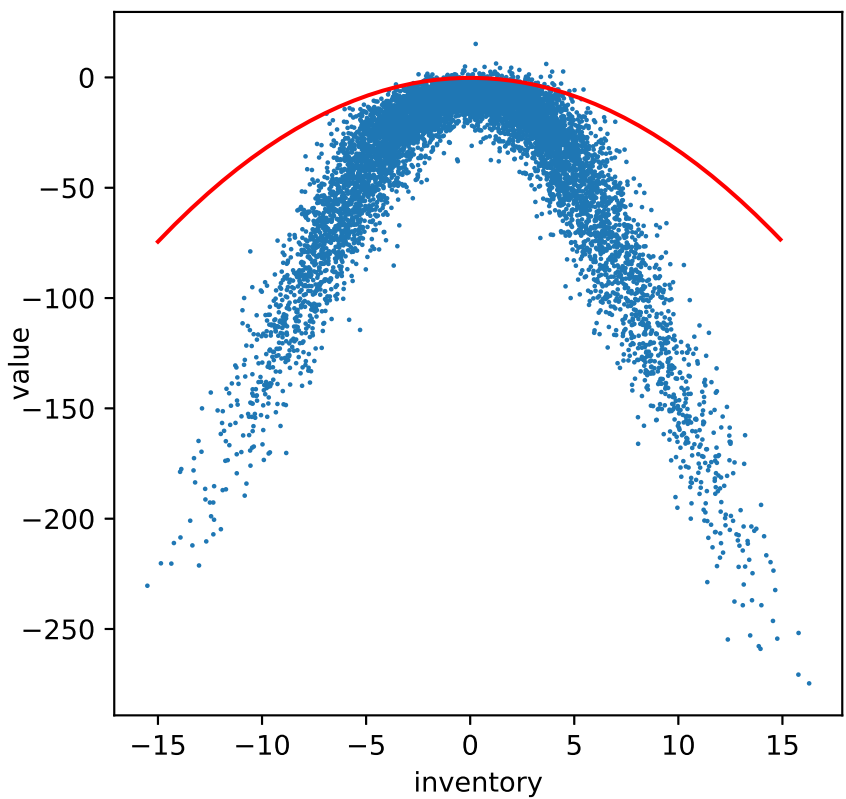}
        \caption{Under the initial policy, the value $V(\boldsymbol{Y})$ ($y$-axis) learned by the ESN at the inventory $\boldsymbol{Y}$ ($x$-axis) at each of the 10000 timesteps is shown. The parabolic shape is consistent with the analytically derived optimal value function~(\ref{invariant_measure}) shown in red. We note that the value function under the initial policy $\pi_0$ is not expected to match the value function under the optimal policy $\pi^*$.}
        \label{initial_value}
\end{figure}

\begin{figure}
    \centering
        \begin{subfigure}[b]{1.0\textwidth}
        \includegraphics[width=1.0\textwidth]{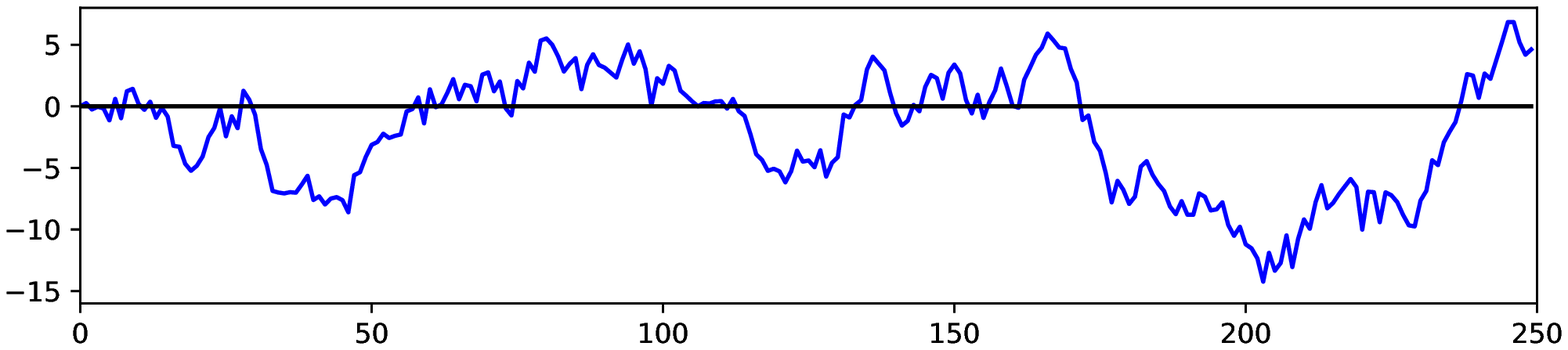}
        \caption{}
        \label{fig:untrained_market_maker}
    \end{subfigure}
    \begin{subfigure}[b]{1.0\textwidth}
        \includegraphics[width=1.0\textwidth]{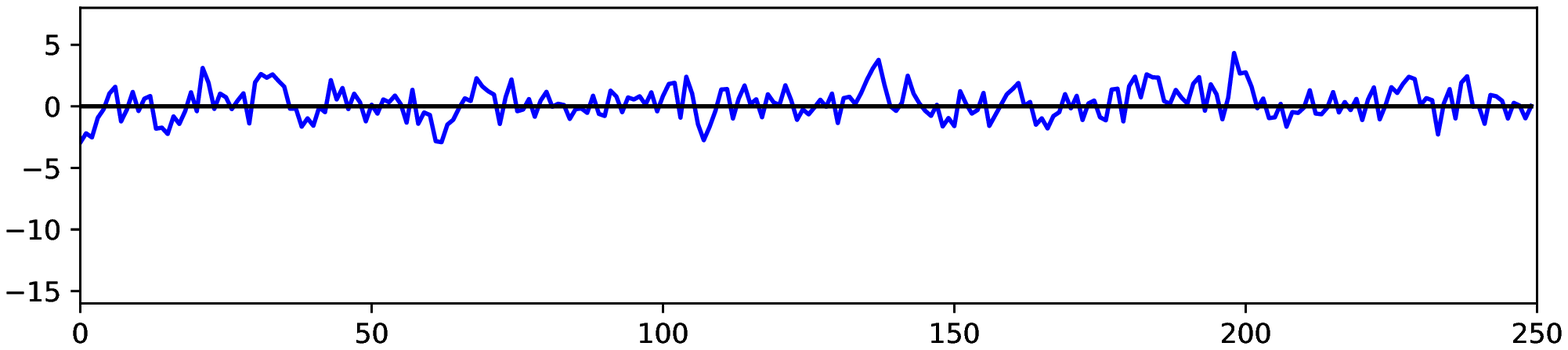}
        \caption{}
        \label{fig:trained_market_maker}
    \end{subfigure}
\caption{
Dynamics of the market maker over time executing (a) the initial policy $\pi_0$ 
and (b) the improved policy $\pi_1$.  
For each plot, the inventory ($y$-axis)
is shown evolving with time ($x$-axis).}
\label{fig:untrained_market_maker_main}
\end{figure}

\subsubsection{Updating the policy}
\label{update_the_policy}

We sought to create a new and improved policy based on the observations of under the initial policy using a na\"ive approach. At each time step, we consider 100 trial actions $a^{(1)}, a^{(2)}, \ldots  , a^{(100)}$ drawn from the standard normal distribution $\mathcal{N}(0,1)$ and compute 
\begin{align*}
    x^{(i)}_{k+1} = \sigma(\boldsymbol{A} x_k + \boldsymbol{C} z^{(i)}_k + \boldsymbol{\zeta})
\end{align*}
where $z^{(i)}_k$ is the (inventory, action) pair $(y_k, a^{(i)})$, and $a^{(i)}$ is trial action. For each $i$, we compute $W^{\top} x^{(i)}_{k+1}$ to obtain the predicted value of executing action $a^{(i)}$. We then choose to execute the action $a^*$ with the greatest predicted value, and update the reservoir state using this (inventory, action) pair $(y_k,a^*)$. This defines our new policy. We ran this new policy for 10,000 time steps and illustrated the results in Figures \ref{fig:scatter}, and \ref{fig:invariant_measure}.

\begin{figure}[t]
    \centering
    \begin{subfigure}{0.47\textwidth}
        \includegraphics[width=\textwidth]{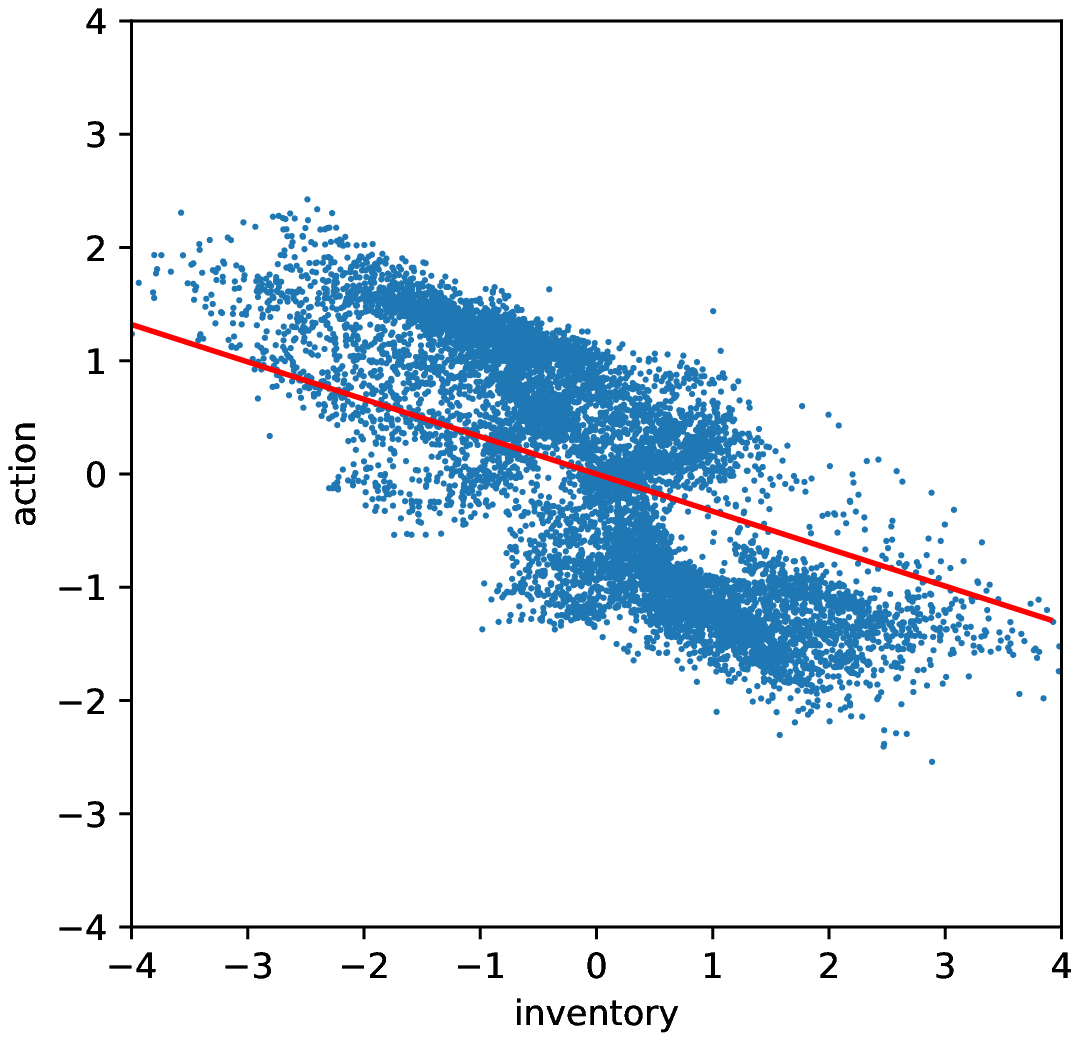}
        \caption{}
        \label{fig:scatter}
        \end{subfigure}
        ~
        \begin{subfigure}{0.47\textwidth}
            \includegraphics[width=\textwidth]{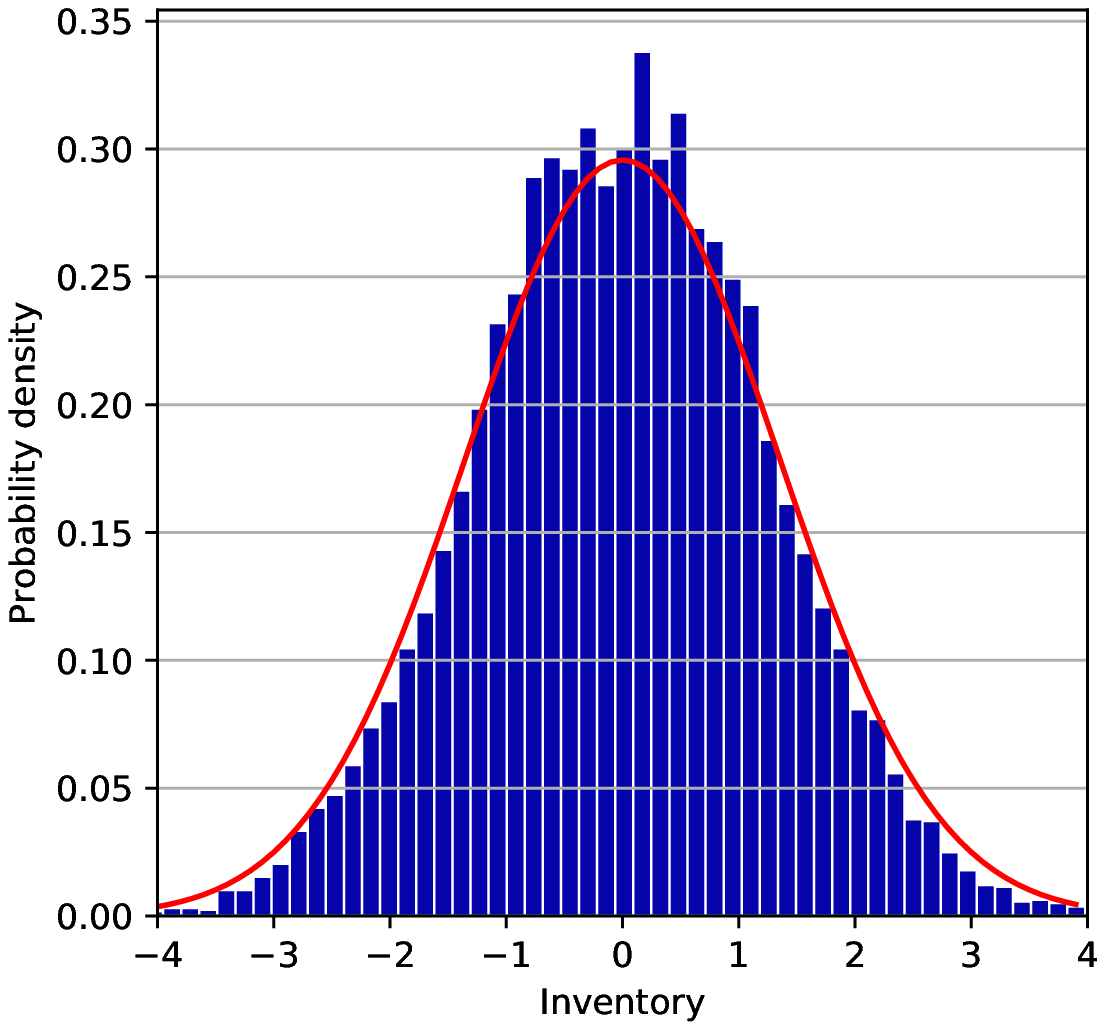}
            \caption{}
        \label{fig:invariant_measure}
        \end{subfigure}
        \caption{(a) Illustrates the (inventory, action) pairs $(y_k, a_k)$ under the improved policy $\pi_1$ are represented as points on the scatter plot. The inventory is on the $x$-axis, and action is on the $y$-axis. The red line represents the analytically derived optimal control (equation   \eqref{v_and_phi}). (b) Illustrates the invariant measure of the inventory process under the improved policy $\pi_1$ is approximated with a histogram. The histogram is compared to the analytically derived invariant measure of the optimal control process $\mathcal{N}(0,1.82)$ (equation \eqref{invariant_measure}).}
\end{figure}

\begin{algorithm}
\caption{One Step Offline Learning Algorithm (Market Making)}
\label{1_step_algorithm_mm}
\begin{algorithmic}[1]
\State Choose initial reservoir state $x_0$
\State Randomly generate $\boldsymbol{A},\boldsymbol{C},\boldsymbol{\zeta}$
\State \textbf{for} each $k$ from $0$ to $\ell-1$
\Indent 
    \State Compute $x_{k+1} = \sigma(\boldsymbol{A}x_k + \boldsymbol{C}(y_k,a_k) + \boldsymbol{\zeta})$
\EndIndent
\State Find $W$ that minimises $\sum^{\ell-1}_{k=0} \lVert W^{\top}(x_k - \gamma x_{k+1}) - r_k \rVert^2 + \lambda \lVert W \rVert^2$ 
\State \textbf{for} each $k$ from $\ell$ to $L-1$
\Indent 
    \State Compute $a^* = \max_a \{ W^{\top}\sigma(\boldsymbol{A}x_k + \boldsymbol{C}(y_k,a) + \boldsymbol{\zeta}) \}$
    \State Compute $x_{k+1} = \sigma(\boldsymbol{A}x_k + \boldsymbol{C}(y_k,a^*) + \boldsymbol{\zeta})$
\EndIndent
\end{algorithmic}
\end{algorithm}

\subsection{Comparison between the analytic and learned solutions}

The one step reinforcement learning algorithm did not perfectly replicate the analytically derived optimal control, but has moved in a promising direction. We can see in Figure \ref{fig:scatter} that the inventory process under the improved policy produces (inventory, action) pairs that have some scatter relative to the optimal policy indicated by the red straight line. This suggests that the market maker trained by reinforcement learning is behaving well in some average sense, despite performing many sub-optimal actions. It also appears that the the reinforcement learning algorithm uses the control more aggressively than is optimal. This sub-optimal control results in greater costs than the optimal control. In particular the average cost incurred under the improved policy $\pi_1$ is 2.65, while the average cost under the the analytically derived optimal policy is $\sigma / \sqrt{p(2-p}) = 1.35$. 

Despite these sub-optimal moves, it seems that the inventory process learned by the market maker has an invariant measure that closely matches the optimal invariant measure. It is reassuring to see that an invariant measure appears, at least numerically, to exist, because the controlled process is assumed to be stationary and ergodic (and therefore admits an invariant measure) in Theorem \ref{offline_q_learning}.

It is also worth noting that the inventory process, controlled either by the ESN or the optimal control, has support on $\mathbb{R}$, which is not a compact space. Therefore, the conditions of Theorem \ref{offline_q_learning} don't technically hold. However, the numerical results here suggest that the ESN has learned the value functional adequately well, suggesting that Theorem \ref{offline_q_learning} may hold under relaxed conditions. Of course, realisations of the stochastic processes always explore only bounded subsets of $\mathbb{R}$.

\section{Conclusions and future work}
\label{Conclusions}

In this paper we have presented three novel mathematical results concerning Echo State Networks trained on data drawn from a stationary ergodic process.
The first applies to offline supervised learning. The theorem states that, given a target function, enough training data and a large enough ESN, the least squares training procedure will yield an arbitrarily good approximation to the target function. The second result applies to an agent performing a stochastic policy $\pi$. After the agent has collected enough training data, and given a sufficiently large ESN, the least squares training procedure will yield an arbitrarily good approximation to the value function associated to the policy $\pi$.
The third result is relevant to online reinforcement learning. Though the result is quite preliminary, the lemma is introduced with the intention of developing online algorithms (inspired by Q-learning) to learn the optimal policy for non-Markovian problems. 

We demonstrated the second result (which generalises the first) on a deterministic control problem (Bee World) and a stochastic control problem (the market making problem). We chose these `toy model' problems to understand the performance of the algorithm completely in cases that are solvable analytically, although these optimal solutions themselves are not entirely trivial. 
The reinforcement learning algorithm we use to improve the policy in both Bee World and the market making problem is extremely simple. It is essentially one iteration of an $\epsilon$-greedy policy \citep{the_bible}, with $\epsilon$ set to 0. Despite the simplicity of the algorithm, the single iteration considerably improved the policy, resulting in a reasonable approximation to the optimal policy. 

It therefore seems a natural direction of future work to develop more sophisticated learning algorithms. Notably the linear upper confidence bound (linUCB) algorithm \citep{the_bible} has a linear structure that fits cleanly into the the linear training framework of the ESN. As this work develops, it will become essential to have a rigorous framework describing the relationship between filters, functionals, random processes and reinforcement learning.  The theory presented in this paper tentatively connects these objects using ideas from Markov Decision Processes, but the theory is far from complete.

\section*{Acknowledgements}
Allen Hart and Kevin Olding are funded through the EPSRC Centre for Doctoral Training in Statistical Applied Mathematics at Bath (SAMBa), grant number EP/L015684/1.

We thank Jeremy Worsfold for insights about reinforcement learning and the linUCB algorithm, and for refactoring the Bee World code. We also thank Adam White for useful discussions about reinforcement learning settings.

\bibliographystyle{elsarticle-num} 
\bibliography{references}

\begin{thebibliography}{10}
\expandafter\ifx\csname url\endcsname\relax
  \def\url#1{\texttt{#1}}\fi
\expandafter\ifx\csname urlprefix\endcsname\relax\def\urlprefix{URL }\fi
\expandafter\ifx\csname href\endcsname\relax
  \def\href#1#2{#2} \def\path#1{#1}\fi

\bibitem{Jaeger2001}
H.~Jaeger, The “echo state” approach to analysing and training recurrent
  neural networks, GMD-Report 148, German National Research Institute for
  Computer Science (01 2001).

\bibitem{doi:10.1162/089976602760407955}
W.~Maass, T.~Natschläger, H.~Markram, Real-time computing without stable
  states: A new framework for neural computation based on perturbations, Neural
  Computation 14~(11) (2002) 2531--2560.
\newblock \href {https://doi.org/10.1162/089976602760407955}
  {\path{doi:10.1162/089976602760407955}}.

\bibitem{General_Value_Function_Networks}
M.~Schlegel, A.~Jacobsen, M.~Zaheer, A.~Patterson, A.~White, M.~White, General
  value function networks, arXiv:1807.06763 (2018).

\bibitem{Gonon2020}
L.~Gonon, L.~Grigoryeva, J.-P. Ortega, Approximation bounds for random neural
  networks and reservoir systems, arXiv:2002.05933 (2020).

\bibitem{LUKOSEVICIUS2009127}
M.~Lukoševičius, H.~Jaeger, Reservoir computing approaches to recurrent
  neural network training, Computer Science Review 3~(3) (2009) 127 -- 149.
\newblock \href {https://doi.org/https://doi.org/10.1016/j.cosrev.2009.03.005}
  {\path{doi:https://doi.org/10.1016/j.cosrev.2009.03.005}}.

\bibitem{LukoseviciusMantas2012RCT}
M.~Luko{\v s}evi{\v c}ius, H.~Jaeger, B.~Schrauwen, Reservoir computing trends,
  K{\"u}nstliche Intelligenz. 26~(4) (2012) 365--371.

\bibitem{RodanA2011MCES}
A.~Rodan, P.~Tino, Minimum complexity echo state network, IEEE transactions on
  neural networks 22~(1) (2011) 131--144.

\bibitem{NIPS2010_4056}
F.~Triefenbach, A.~Jalalvand, B.~Schrauwen, J.-P. Martens, Phoneme recognition
  with large hierarchical reservoirs, in: J.~D. Lafferty, C.~K.~I. Williams,
  J.~Shawe-Taylor, R.~S. Zemel, A.~Culotta (Eds.), Advances in Neural
  Information Processing Systems 23, Curran Associates, Inc., 2010, pp.
  2307--2315.

\bibitem{Jaeger78}
H.~Jaeger, H.~Haas, Harnessing nonlinearity: Predicting chaotic systems and
  saving energy in wireless communication, Science 304~(5667) (2004) 78--80.
\newblock \href
  {http://arxiv.org/abs/https://science.sciencemag.org/content/304/5667/78.full.pdf}
  {\path{arXiv:https://science.sciencemag.org/content/304/5667/78.full.pdf}},
  \href {https://doi.org/10.1126/science.1091277}
  {\path{doi:10.1126/science.1091277}}.

\bibitem{10.1007/11840817_86}
I.~Szita, V.~Gyenes, A.~L{\H{o}}rincz, Reinforcement learning with echo state
  networks, in: S.~D. Kollias, A.~Stafylopatis, W.~Duch, E.~Oja (Eds.),
  Artificial Neural Networks -- ICANN 2006, Springer Berlin Heidelberg, Berlin,
  Heidelberg, 2006, pp. 830--839.

\bibitem{Pietz_2021}
S.~Peitz, K.~Bieker, On the universal transformation of data-driven models to
  control systems, arXiv:2102.04722 (2021).

\bibitem{Random_Weights}
A.~Saxe, P.~W. Koh, Z.~Chen, M.~Bhand, B.~Suresh, A.~Ng, On random weights and
  unsupervised feature learning, In Proceedings of the 28th International
  Conference on Machine Learning (2011).

\bibitem{GRIGORYEVA2018495}
L.~Grigoryeva, J.-P. Ortega, Echo state networks are universal, Neural Networks
  108 (2018) 495 -- 508.
\newblock \href {https://doi.org/https://doi.org/10.1016/j.neunet.2018.08.025}
  {\path{doi:https://doi.org/10.1016/j.neunet.2018.08.025}}.

\bibitem{JMLR:v20:19-150}
L.~Grigoryeva, J.-P. Ortega,
  \href{http://jmlr.org/papers/v20/19-150.html}{Differentiable reservoir
  computing}, Journal of Machine Learning Research 20~(179) (2019) 1--62.
\newline\urlprefix\url{http://jmlr.org/papers/v20/19-150.html}

\bibitem{mcgoff2015}
K.~McGoff, S.~Mukherjee, N.~Pillai,
  \href{https://doi.org/10.1214/15-SS111}{Statistical inference for dynamical
  systems: A review}, Statist. Surv. 9 (2015) 209--252.
\newblock \href {https://doi.org/10.1214/15-SS111}
  {\path{doi:10.1214/15-SS111}}.
\newline\urlprefix\url{https://doi.org/10.1214/15-SS111}

\bibitem{9174045}
A.~Khaleghi, D.~Ryabko, Clustering piecewise stationary processes, in: 2020
  IEEE International Symposium on Information Theory (ISIT), 2020, pp.
  2753--2758.
\newblock \href {https://doi.org/10.1109/ISIT44484.2020.9174045}
  {\path{doi:10.1109/ISIT44484.2020.9174045}}.

\bibitem{HART2020234}
A.~Hart, J.~Hook, J.~Dawes,
  \href{http://www.sciencedirect.com/science/article/pii/S0893608020301830}{Embedding
  and approximation theorems for echo state networks}, Neural Networks 128
  (2020) 234 -- 247.
\newblock \href {https://doi.org/https://doi.org/10.1016/j.neunet.2020.05.013}
  {\path{doi:https://doi.org/10.1016/j.neunet.2020.05.013}}.
\newline\urlprefix\url{http://www.sciencedirect.com/science/article/pii/S0893608020301830}

\bibitem{Hart_regularised_2020}
A.~G. Hart, J.~L. Hook, J.~H. Dawes,
  \href{https://www.sciencedirect.com/science/article/pii/S0167278921000403}{Echo
  state networks trained by tikhonov least squares are $l^2(\mu)$ approximators
  of ergodic dynamical systems}, Physica D: Nonlinear Phenomena (2021)
  132882\href {https://doi.org/https://doi.org/10.1016/j.physd.2021.132882}
  {\path{doi:https://doi.org/10.1016/j.physd.2021.132882}}.
\newline\urlprefix\url{https://www.sciencedirect.com/science/article/pii/S0167278921000403}

\bibitem{Adaptive_Algorithms_and_Stochastic_Approximations}
A.~Benveniste, M.~Métivier, P.~Prioure, Adaptive Algorithms and Stochastic
  Approximations, Springer-Verlag, 1990.

\bibitem{Borkar2009}
V.~S. Borkar, Stochastic Approximation: A Dynamical Systems Viewpoint,
  Hindustan Book Agency, 2009.

\bibitem{10.1007/978-3-540-72927-3_23}
F.~S. Melo, M.~I. Ribeiro, Q-learning with linear function approximation, in:
  N.~H. Bshouty, C.~Gentile (Eds.), Learning Theory, Springer Berlin
  Heidelberg, Berlin, Heidelberg, 2007, pp. 308--322.

\bibitem{Chen2019}
Z.~Chen, S.~Zhang, T.~T. Doan, S.~T. Maguluri, J.-P. Clarke, Performance of
  q-learning with linear function approximation: Stability and finite-time
  analysis, arXiv:1905.11425 (2019).

\bibitem{LIN20097313}
X.~Lin, Z.~Yang, Y.~Song,
  \href{http://www.sciencedirect.com/science/article/pii/S0957417408006519}{Short-term
  stock price prediction based on echo state networks}, Expert Systems with
  Applications 36~(3, Part 2) (2009) 7313 -- 7317.
\newblock \href {https://doi.org/https://doi.org/10.1016/j.eswa.2008.09.049}
  {\path{doi:https://doi.org/10.1016/j.eswa.2008.09.049}}.
\newline\urlprefix\url{http://www.sciencedirect.com/science/article/pii/S0957417408006519}

\bibitem{doi:10.1260/1748-3018.7.1.87}
H.~Zhang, J.~Liang, Z.~Chai,
  \href{https://doi.org/10.1260/1748-3018.7.1.87}{Stock prediction based on
  phase space reconstruction and echo state networks}, Journal of Algorithms \&
  Computational Technology 7~(1) (2013) 87--100.
\newblock \href {http://arxiv.org/abs/https://doi.org/10.1260/1748-3018.7.1.87}
  {\path{arXiv:https://doi.org/10.1260/1748-3018.7.1.87}}, \href
  {https://doi.org/10.1260/1748-3018.7.1.87}
  {\path{doi:10.1260/1748-3018.7.1.87}}.
\newline\urlprefix\url{https://doi.org/10.1260/1748-3018.7.1.87}

\bibitem{Dan2014}
J.~Dan, W.~Guo, W.~Shi, B.~Fang, T.~Zhang,
  \href{https://doi.org/10.1155/2014/137148}{Deterministic echo state networks
  based stock price forecasting}, Abstract and Applied Analysis 2014 (2014)
  137148.
\newblock \href {https://doi.org/10.1155/2014/137148}
  {\path{doi:10.1155/2014/137148}}.
\newline\urlprefix\url{https://doi.org/10.1155/2014/137148}

\bibitem{6319485}
J.~{Bozsik}, Z.~{Ilonczai}, Echo state network-based credit rating system, in:
  2012 4th IEEE International Symposium on Logistics and Industrial
  Informatics, 2012, pp. 185--190.

\bibitem{6924052}
L.~{Maciel}, F.~{Gomide}, D.~{Santos}, R.~{Ballini}, Exchange rate forecasting
  using echo state networks for trading strategies, in: 2014 IEEE Conference on
  Computational Intelligence for Financial Engineering Economics (CIFEr), 2014,
  pp. 40--47.

\bibitem{doi:10.1080/14697680701381228}
M.~Avellaneda, S.~Stoikov,
  \href{https://doi.org/10.1080/14697680701381228}{High-frequency trading in a
  limit order book}, Quantitative Finance 8~(3) (2008) 217--224.
\newblock \href
  {http://arxiv.org/abs/https://doi.org/10.1080/14697680701381228}
  {\path{arXiv:https://doi.org/10.1080/14697680701381228}}, \href
  {https://doi.org/10.1080/14697680701381228}
  {\path{doi:10.1080/14697680701381228}}.
\newline\urlprefix\url{https://doi.org/10.1080/14697680701381228}

\bibitem{doi:10.1080/1350486X.2017.1342552}
O.~Gu\'eant, \href{https://doi.org/10.1080/1350486X.2017.1342552}{Optimal
  market making}, Applied Mathematical Finance 24~(2) (2017) 112--154.
\newblock \href
  {http://arxiv.org/abs/https://doi.org/10.1080/1350486X.2017.1342552}
  {\path{arXiv:https://doi.org/10.1080/1350486X.2017.1342552}}, \href
  {https://doi.org/10.1080/1350486X.2017.1342552}
  {\path{doi:10.1080/1350486X.2017.1342552}}.
\newline\urlprefix\url{https://doi.org/10.1080/1350486X.2017.1342552}

\bibitem{fazel_global_2018}
M.~Fazel, R.~Ge, S.~Kakade, M.~Mesbahi, Global {Convergence} of {Policy}
  {Gradient} {Methods} for the {Linear} {Quadratic} {Regulator}, in:
  International {Conference} on {Machine} {Learning}, PMLR, 2018, pp.
  1467--1476, iSSN: 2640-3498.

\bibitem{learning_the_LQR}
Z.~Mhammedi, D.~J. Foster, M.~Simchowitz, D.~Misra, W.~Sun, A.~Krishnamurthy,
  A.~Rakhlin, J.~Langford, Learning the linear quadratic regulator from
  nonlinear observations, arXiv:2010.03799 (2020).

\bibitem{dean_sample_2020}
S.~Dean, H.~Mania, N.~Matni, B.~Recht, S.~Tu,
  \href{https://doi.org/10.1007/s10208-019-09426-y}{On the {Sample}
  {Complexity} of the {Linear} {Quadratic} {Regulator}}, Foundations of
  Computational Mathematics 20~(4) (2020) 633--679.
\newblock \href {https://doi.org/10.1007/s10208-019-09426-y}
  {\path{doi:10.1007/s10208-019-09426-y}}.
\newline\urlprefix\url{https://doi.org/10.1007/s10208-019-09426-y}

\bibitem{tu_gap_2019}
S.~Tu, B.~Recht, \href{http://proceedings.mlr.press/v99/tu19a.html}{The {Gap}
  {Between} {Model}-{Based} and {Model}-{Free} {Methods} on the {Linear}
  {Quadratic} {Regulator}: {An} {Asymptotic} {Viewpoint}}, in: Conference on
  {Learning} {Theory}, PMLR, 2019, pp. 3036--3083, iSSN: 2640-3498.
\newline\urlprefix\url{http://proceedings.mlr.press/v99/tu19a.html}

\bibitem{bradtke_adaptive_1994}
S.~Bradtke, B.~Ydstie, A.~Barto, Adaptive linear quadratic control using policy
  iteration, in: Proceedings of 1994 {American} {Control} {Conference} - {ACC}
  '94, Vol.~3, 1994, pp. 3475--3479 vol.3.
\newblock \href {https://doi.org/10.1109/ACC.1994.735224}
  {\path{doi:10.1109/ACC.1994.735224}}.

\bibitem{the_bible}
R.~S. Sutton, A.~G. Barto, Reinforcement Learning: An Introduction, MIT-press,
  2015.

\end{thebibliography}

\end{document}